\tikzset{snake it/.style={decorate, decoration=snake}}
\numberwithin{equation}{section}
\newcommand{\ddt}[1]{\frac{\partial}{\partial t}#1}
\newcommand{\inp}[2][]{\left(#1, #2\right)}
\newcommand{\gnp}[2][]{\langle#1, #2\rangle}
\def\As{A_{\sigma\sigma}}
\def\Asu{A_{\sigma u}}
\def\Asg{A_{\sigma\gamma}}
\def\Asp{A_{\sigma p}}
\def\Az{A_{zz}}
\def\Azp{A_{zp}}
\def\Azz{A_{zz}}
\def\Ap{A_{pp}}
\def\Ausu{A_{u\sigma u}}
\def\Ausg{A_{u\sigma \g}}
\def\Ausp{A_{u\sigma p}}
\def\Agsg{A_{\g\sigma \g}}
\def\Agsp{A_{\g\sigma p}}
\def\Apszp{A_{p\sigma z p}}
\def\Tc{\mathcal{T}}
\def\Qc{\mathcal{Q}}
\def\BDM{\mathcal{BDM}}
\def\RT{\mathcal{RT}}
\def\Pc{\mathcal{P}}
\def\X{\mathbb{X}}
\def\Q{\mathbb{Q}}
\def\R{\mathbb{R}}
\def\M{\mathbb{M}}
\def\S{\mathbb{S}}
\def\N{\mathbb{N}}
\def\H{\mathbb{H}}
\def\a{\alpha}
\def\s{\sigma}
\def\t{\tau}
\def\g{\gamma}
\def\l{\lambda}
\def\z{\zeta}
\def\tet{\theta}
\def\r{\mathbf{r}}
\def\As{A_{\s\s}}
\def\Au{A_{\s u}}
\def\Ag{A_{\s\g}}
\def\skew{\operatorname{Skew}}
\def\tr{\operatorname{tr}}
\def\dvr{\mathrm{div} \,}
\def\curl{\mathrm{curl} \,}
\def\Hdiv{H(\mathrm{div}; \O)}
\def\dt{\partial_t}
\def\Eh{\hat{E}}
\def\Qh{\hat{\Qc}}
\def\rh{\hat{\r}}
\def\eh{\hat{e}}
\def\nh{\hat{n}}
\def\xh{\hat{x}}
\def\yh{\hat{y}}
\def\th{\hat{\t}}
\def\Xh{\hat{\X}}
\def\Vh{\hat{V}}
\def\Wh{\hat{W}}
\def\Zh{\hat{Z}}
\def\Pih{\hat{\Pi}}
\def\O{\Omega}
\def\Gn{\Gamma_N}
\def\Gd{\Gamma_D}
\def\dO{{\partial\Omega}}
\def\l2o{{L^{2}(\Omega)}}
\def\api{{\alpha p_h I}}
\def\K{{K^{-1}}}
\newtheorem{remark}{Remark}[section]
\newtheorem{lemma}{Lemma}[section]
\newtheorem{corollary}{Corollary}[section]
\newtheorem{theorem}{Theorem}[section]
\newtheorem{proposition}{Proposition}[section]
\begin{document}
	\title{A coupled multipoint stress - multipoint flux mixed finite element method for 
the Biot system of poroelasticity}

\author{Ilona Ambartsumyan\thanks{Department of Mathematics, University of
		Pittsburgh, Pittsburgh, PA 15260, USA;~{\tt \{ila6@pitt.edu, elk58@pitt.edu, yotov@math.pitt.edu\}}. Partially supported by DOE grant DE-FG02-04ER25618 and NSF grant DMS 1818775.} \thanks{Oden Institute for Computational Engineering and Sciences, The University of Texas at Austin, Austin, TX 78712, USA;
{\tt \{ailona@austin.utexas.edu, ekhattatov@austin.utexas.edu\}}.}~\and
  Eldar Khattatov\footnotemark[1] \footnotemark[2]
  \and Ivan Yotov\footnotemark[1]~}

\date{\today}
\maketitle
\begin{abstract}
We present a mixed finite element method for a five-field formulation
of the Biot system of poroelasticity that reduces to a cell-centered
pressure-displacement system on simplicial and quadrilateral grids. A
mixed stress-displacement-rotation formulation for elasticity with
weak stress symmetry is coupled with a mixed velocity-pressure Darcy
formulation. The spatial discretization is based on combining the
multipoint stress mixed finite element (MSMFE) method for elasticity
and the multipoint flux mixed finite element (MFMFE) method for Darcy
flow.  It uses the lowest order Brezzi-Douglas-Marini mixed finite
element spaces for the poroelastic stress and Darcy velocity,
piecewise constant displacement and pressure, and continuous
piecewise linear or bilinear rotation. A vertex quadrature rule is
applied to the velocity, stress, and stress-rotation bilinear forms,
which block-diagonalizes the corresponding matrices and allows for
local velocity, stress, and rotation elimination. This leads to a
cell-centered positive-definite system for pressure and displacement
at each time step. We perform error analysis for the semidiscrete and
fully discrete formulations, establishing first order convergence for
all variables in their natural norms. The numerical tests confirm the
theoretical convergence rates and illustrate the locking-free
property of the method.
\end{abstract}

\section{Introduction}
The Biot system of poroelasticity \cite{biot1941general,
  showalter2000diffusion} models fluid flow within deformable porous
media. It has been extensively studied in the literature due to its
wide range of applications. Examples include geosciences, such as
groundwater cleanup, hydraulic fracturing, and carbon sequestration,
as well as biomedical applications, such as modeling of arterial flows
and organ tissue. The system consists of an equilibrium equation for
the solid and a mass balance equation for the fluid.  This is a fully
coupled system, as the fluid pressure contributes to the solid stress,
while the divergence of the solid displacement affects the fluid
content. There is a large literature on the the numerical solution of
the Biot system. Schemes for the two-field displacement--pressure
formulation include finite difference \cite{Gaspar-FD-Biot}, finite
volume \cite{Jan-SINUM-Biot}, and finite element methods
\cite{murad1992improved,Zik-MINI}. The finite element methods are
either based on inf-sup stable pairs \cite{murad1992improved,Zik-MINI}
or employ a suitable stabilization to avoid pressure oscillations
\cite{Zik-MINI}. The three-field displacement--pressure--Darcy
velocity formulation has also been studied extensively. It has the
advantage that stable mixed finite element spaces for the Darcy
velocity and the pressure can be utilized, resulting in accurate fluid
velocity and local mass conservation. Various choices of displacement
discretizations have been used in the three-field formulation,
including continuous, \cite{phillips2007coupling1,
  phillips2007coupling2,Zik-stab,Yi-Biot-locking},
nonconforming\cite{Zik-nonconf,Lee-Biot-three-field,Yi-Biot-nonconf},
and discontinuous elements \cite{phillips-DG,Liu-thesis}. The last two
choices provide locking-free approximations.  Alternatively,
stabilized continuous displacement elements can be used to suppress
pressure oscillations \cite{Zik-stab,Yi-Biot-locking}. Locking-free
discretizations for a different three-field
displacement--pressure--total pressure formulation are developed in
\cite{Lee-Mardal-Winther,ORB}.
A least squares method based on a stress--displacement--velocity--pressure
formulation is developed in \cite{korsawe2005least}.
More recently, fully-mixed formulations
of the Biot system have been studied
\cite{Yi-Biot-mixed,Lee-Biot-five-field}.  In \cite{Yi-Biot-mixed}, a
stress--displacement mixed elasticity formulation is coupled with a
velocity-pressure mixed Darcy model. This approach is extended in
\cite{Lee-Biot-five-field}, where a weakly symmetric
stress--displacement--rotation elasticity formulation is considered.

In this paper we develop a new fully-mixed finite element method for
the quasistatic Biot system of poroelasticity. The advantages of
fully-mixed approximations include locking-free behavior, robustness
with respect to the physical parameters, local mass and momentum
conservation, and accurate stress and velocity approximations with
continuous normal components across element edges or faces. They can
also handle discontinuous full tensor permeabilities and Lam\'{e}
coefficients that are often encountered in modeling subsurface flows.
A disadvantage of fully-mixed methods is that they result in large
algebraic systems of saddle point type at each time step. In
particular, the methods developed in \cite{Yi-Biot-mixed} and
\cite{Lee-Biot-five-field} involve four-field and five-field
formulations, respectively. Our goal is to develop a fully-mixed
method that can be reduced to a positive definite cell-centered
displacement--pressure system. As a result, the method inherits all
the advantages of fully-mixed finite element methods, while having a
significantly reduced computational cost. In fact, the number of
unknowns in the reduced algebraic system is smaller than in any of the
aforementioned finite element methods. It is comparable to the cost of
the finite volume method developed in \cite{Jan-SINUM-Biot}.

Our approach is based on the five-field formulation proposed in
\cite{Lee-Biot-five-field}. We couple the recently developed
multipoint stress mixed finite element (MSMFE) method for elasticity
\cite{msmfe1,msmfe2} with weak stress symmetry and the multipoint flux
mixed finite element (MFMFE) method for Darcy flow
\cite{ingram2010multipoint,wheeler2006multipoint,wheeler2012multipoint}.
The MFMFE method is related to the the finite volume multipoint flux
approximation (MPFA) method
\cite{aavatsmark1998discretization,edwards1998finite}.  The MFMFE
method provides a variational formulation for the MPFA method, which
allows for utilizing mixed finite element tools for its analysis. It
uses the lowest order Brezzi-Douglas-Marini $\BDM_1$
\cite{brezzi1985two,Nedelec86} spaces for the Darcy velocity and piecewise
constant pressure. The vertex quadrature rule for the velocity
bilinear form gives a block-diagonal mass matrix with blocks
associated with the mesh vertices and allows for local velocity
elimination, resulting in a cell-centered pressure system. The MFMFE
method is analyzed on simplices and smooth quadrilateral and
hexahedral grids, i.e., with elements that are $O(h^2)$-perturbations of
parallelograms, in
\cite{ingram2010multipoint,wheeler2006multipoint}. A similar approach
on simplices is proposed in \cite{Brezzi.F;Fortin.M;Marini.L2006}.  A
non-symmetric version of the MFMFE method for general quadrilateral
and hexahedral grids is developed in \cite{wheeler2012multipoint}; see
also an alternative formulation based on a broken Raviart-Thomas
velocity space in \cite{klausen2006robust}. The MSMFE method for
elasticity with weak stress symmetry was recently developed in
\cite{msmfe1} on simplices and in \cite{msmfe2} on smooth
quadrilateral grids.  It uses $\BDM_1$ elements for the stress,
piecewise constant displacement, and continuous piecewise linear 
rotation. The vertex quadrature
rule is applied for the stress bilinear form, as well as the two
stress--rotation bilinear forms. This allows for local stress and
rotation elimination around the mesh vertices, resulting in a
cell-centered displacement system. The development of the MSMFE method
was motivated by the finite volume multipoint stress approximation
(MPSA) method for elasticity introduced in \cite{Jan-IJNME} and
analyzed in \cite{nordbotten2015convergence} as a discontinuous
Galerkin (DG) method. A weak symmetry MPSA method, which is more
closely related to the MSMFE method has been developed in
\cite{keilegavlen2017finite}.

In this work we develop and analyze a coupled MSMFE--MFMFE method for
the Biot system of poroelasticity. Starting with the five-field
stress--displacement--rotation--velocity--pressure formulation from
\cite{Lee-Biot-five-field}, we employ the vertex quadrature rule for
the stress, stress--rotation, and velocity bilinear forms. Since the
stress, rotation, and velocity degrees of freedom can be associated
with the mesh vertices, the quadrature rule localizes their
interaction around the vertices, resulting in block-diagonal
matrices. The stress and velocity, and consequently the rotation, can
then be locally eliminated by solving small vertex-based linear
systems. This procedure reduces the five-field saddle point system to
a cell-centered displacement-pressure system. The elimination
procedure resembles the approach in the finite volume method for the
Biot system developed in \cite{Jan-SINUM-Biot}, which couples the MPSA
and MPFA methods, although the method there is not based on weak
symmetry and does not explicitly involve rotations. We also note that
in our method we utilize a symmetric quadrature rule, as in the
symmetric MFMFE method
\cite{ingram2010multipoint,wheeler2006multipoint} and the MSMFE method
\cite{msmfe1,msmfe2}. As the individual methods, our coupled method is
suitable for simplicial grids in two and three dimensions and
quadrilateral grids with elements that are $O(h^2)$-perturbations of
parallelograms.  While a non-symmetric MFMFE method on general
quadrilaterals and hexahedra is available
\cite{wheeler2012multipoint}, such non-symmetric MSMFE method for
elasticity has not yet been developed.

We perform solvability, stability, and error analysis for the
semidiscrete continuous-in-time and the fully discrete methods. The
well-posedness of the semidiscrete formulation utilizes techniques
from degenerate evolution operators
\cite{showalter2013monotone,Showalter-SIAMMA}. For this purpose, we
differentiate in time the constitutive elasticity equation and
introduce as new variables the time derivatives of the displacement
and the rotation. Stability is obtained for all variables in their
natural spatial norms in both $L^2(0,T)$ and $L^\infty(0,T)$. In order
to obtain control of the divergence of the Darcy velocity, a bound on
the time derivative of the pressure is first derived, using time
differentiation of the rest of the equations. First order spatial
convergence is proven for all variables by combining stability
arguments with bounds on the quadrature and approximation errors. It
is important to note that the stability and convergence bounds are
independent of the storativity coefficient $c_0$ and are valid even
for $c_0 = 0$. As the regime of small $c_0$ results in locking effects
\cite{phillips2009overcoming}, our theory confirms the locking-free
property of the method. We also present the fully-discrete scheme,
based on backward Euler time discretization. The analysis of the
fully-discrete scheme uses the framework developed for the
semidiscrete formulation, combined with standard tools for treating
the discrete time derivatives. 

The rest of the paper is organized as follows. The Biot system and its
fully mixed five-field weak formulation are presented in Section~2.
The semidiscrete MSMFE--MFMFE method is developed in Section~3.  Its
solvability and stability are established in Section~4 and Section~5,
respectively. The error analysis for the semidiscrete method is
carried out in Section~6. Section~7 is devoted to the fully-discrete
MSMFE--MFMFE method, where in addition to its analysis, the procedure
for reducing the algebraic system to a cell-centered
displacement--pressure system is presented. It is further shown that
the resulting system is positive definite.  Numerical results that
confirm the theoretical convergence rates and illustrate the
robustness with respect to $c_0$ and the locking-free behavior of the
method are presented in Section~8.

\section{Model problem and a fully mixed weak formulation}\label{sec:model}
In this section we describe the poroelasticity system and its fully
mixed formulation based on a weak stress symmetry, Let $\O$ be a
simply connected bounded domain of $\R^d,\, d=2,3$, occupied by a
poroelastic media saturated with fluid.  Let $\M$, $\S$, and $\N$ be
the spaces of real $d\times d$ matrices, symmetric matrices, and
skew-symmetric matrices, respectively. The divergence operator $\dvr:
\R^d \to \R$ is the usual divergence for vector fields. It also acts
on matrix fields, $\dvr: \M \to \R^d$ by applying the divergence
row-wise. We will also utilize the operator $\curl$
  acting on scalar fields in two dimensions, $\curl: \R \to \R^2$, defined as
 $ \curl{\phi} = (\partial_2 \phi, -\partial_1 \phi)$.

The stress-strain constitutive relationship for the poroelastic body is
\begin{equation}\label{elast-stress}
A\sigma_e = \epsilon(u),
\end{equation}
where at each point $x \in \O$, $A(x): \S \to \S$, extendible to
$A(x): \M \to \M$, is a symmetric,
bounded and uniformly positive definite linear operator representing the
compliance tensor, $\sigma_e$ is the elastic stress, $u$ is the solid
displacement, and $\epsilon(u) = \frac12(\nabla u + \nabla u^T)$.
In the case of a homogeneous and isotropic body,
$$ 
A\sigma = \frac{1}{2\mu} \left( \sigma - \frac{\lambda}{2\mu
  + d\lambda}\operatorname{tr}(\sigma)I \right), 
$$
where $I$ is the $d \times d$ identity matrix and $\mu > 0, \lambda \ge
0$ are the Lam\'{e} coefficients. In this case the elastic stress is
$\sigma_e = 2\mu\epsilon(u) + \lambda\dvr u\,I$. The poroelastic stress,
which includes the effect of the fluid pressure $p$, is given as
\begin{align}\label{poroelast-stress}
\sigma = \sigma_e - \alpha pI,
\end{align}
where $0 < \alpha \le 1$ is the Biot-Willis constant.

Given a vector field $f$ representing the body forces and a 
source term $q$, the quasi-static Biot system \cite{biot1941general}
that governs the fluid flow within the poroelastic media is as follows:
	\begin{align}
	-\dvr\s &= f \quad \text{in } \O \times (0,T], \label{biot-1} \\
	\K z + \nabla p &= 0 \quad \text{in } \O \times (0,T], \label{biot-2} \\ 
	\ddt(c_0 p + \a \, \dvr u) + \dvr z &= q \quad \text{in } \O \times (0,T], \label{biot-3}  
	\end{align}
where $z$ is the Darcy velocity,
$c_0 \ge 0$ is a mass storativity coefficient, and $K$ is a
symmetric and positive 
definite tensor representing the permeability of the porous media divided
by the fluid viscosity. The system is closed with the boundary conditions
	\begin{align}
	u &= g_u \quad \text{on } \Gd^{displ} \times (0,T], \qquad \s\, n = 0 \quad\text{on } \Gn^{stress} \times (0,T], \label{biot-bc-1}\\
	p &= g_p \quad \text{on } \Gd^{pres} \times (0,T], \qquad z \cdot n = 0 \quad\text{on } \Gn^{vel} \times (0,T] \label{biot-bc-2},
	\end{align}
and the initial condition $p(x,0) = p_0(x)$ in $\O$, where
$\Gd^{displ} \cup \Gn^{stress}= \Gd^{pres} \cup \Gn^{vel} =
\partial\O$ and $n$ is the outward unit normal vector field on
$\partial\Omega$.  To avoid technical issues due to non-uniqueness
in the case of pure Neumann boundary conditions,
we assume that $|\Gd^{*}| > 0$, for
$* = \{displ, \,pres\}$. We note that equations \eqref{biot-1} and
\eqref{biot-2}, which do not include time derivatives, are assumed to
hold at $t = 0$. This is used to construct compatible initial data
for the rest of the variables. The well posedness of the above system
has been studied in \cite{showalter2000diffusion}.

Throughout the paper, $C$ denotes a generic positive constant that is
independent of the discretization parameter $h$. We will also use the
following standard notation. For a domain $G \subset \R^d$, the
$L^2(G)$ inner product and norm for scalar, vector, or tensor valued functions
are denoted $\inp[\cdot]{\cdot}_G$ and $\|\cdot\|_G$,
respectively. The norms and seminorms of the Sobolev spaces
$W^{k,p}(G),\, k \in \R, p>0$ are denoted by $\| \cdot \|_{k,p,G}$ and
$| \cdot |_{k,p,G}$, respectively. The norms and seminorms of the
Hilbert spaces $H^k(G)$ are denoted by $\|\cdot\|_{k,G}$ and $| \cdot
|_{k,G}$, respectively. We omit $G$ in the subscript if $G = \O$. For
a section of the domain or element boundary $S \subset \R^{d-1}$ we
write $\gnp[\cdot]{\cdot}_S$ and $\|\cdot\|_S$ for the $L^2(S)$ inner
product (or duality pairing) and norm, respectively. We will also use
the spaces
\begin{align*}
	&H(\dvr; \O) = \{v \in L^2(\O, \R^d) : \dvr v \in L^2(\O)\}, \\
	&H(\dvr; \O, \M) = \{\t \in L^2(\O, \M) : \dvr \t \in L^2(\O,\R^d)\},
\end{align*}    
equipped with the norm
$$
\|\t\|_{\dvr} = \left( \|\t\|^2 + \|\dvr\t\|^2 \right)^{1/2}.
$$

We next present the mixed weak formulation, which has been proposed in 
\cite{Lee-Biot-five-field}. Using \eqref{elast-stress} and \eqref{poroelast-stress}, 
we have
$$
\dvr u = \tr(\epsilon(u)) = \tr(A\sigma_e) = \tr A(\s + \alpha p I),
$$
which can be substituted in \eqref{biot-3} to give
$$
\dt(c_0 p + \a \tr A(\s + \alpha p I)) + \dvr z = q,
$$
where $\dt$ is a short notation for $\ddt$.
In the weakly symmetric stress 
formulation, we allow for $\sigma$ to be non-symmetric and 
introduce the Lagrange multiplier $\g = \skew(\nabla u)$,
$\skew(\tau) = \frac12(\tau - \tau^T)$, from the space of
skew-symmetric matrices. The constitutive equation \eqref{elast-stress}
can be rewritten as
$$
A(\sigma + \alpha p I) = \nabla u - \gamma.
$$
The mixed weak formulation of the Biot problem 
reads: find $(\sigma,u,\gamma,z,p): [0,T] \mapsto \X \times V \times \Q \times Z \times W$
such that $p(0) = p_0$ and, for a.e. $t \in (0,T)$,
	\begin{align}
	  &\inp[A(\sigma + \a p I)]{\tau} + \inp[u]{\dvr{\tau}}
          + \inp[\gamma]{\tau} = \gnp[g_u]{\t\,n}_{\Gamma_D^{displ}}, &&
          \forall \tau \in \X, \label{eq:cts1}\\
	  &\inp[\dvr{\sigma}]{v} = - \inp[f]{v}, && \forall v \in V,
          \label{eq:cts2}\\
	&\inp[\sigma]{\xi}  = 0, && \forall \xi \in \Q, \label{eq:cts3}\\
	  &\inp[\K z]{\zeta} - \inp[p]{\dvr{\zeta}}  =
          -\gnp[g_p]{\zeta\cdot n}_{\Gd^{pres}}, && \forall \zeta \in Z,
          \label{eq:cts4}\\
	&\inp[c_0\dt{p}]{w} + \a\inp[\dt A(\sigma + \a p I)]{wI} 
+ \inp[\dvr{z}]{w}   = \inp[q]{w}, && \forall w \in W, \label{eq:cts5}
	\end{align}
        where we have used the identity $(\tr A \tau, w) = (A\tau, w I)$ and
        the functional spaces are defined as
\begin{align*}
  &\X = \big\{ \t\in H(\dvr;\Omega,\M) :
  \t\,n = 0 \text{ on } \Gn^{stress}  \big\}, \quad V = L^2(\Omega, \R^d),
  \quad \Q = L^2(\Omega, \N), \\
  &Z = \big\{ \zeta \in H(\dvr;\Omega,\R^d) :
  \zeta\cdot n = 0 \text{ on } \Gn^{vel}  \big\}, \quad W = L^2(\Omega).
\end{align*}
We refer the reader to \cite{showalter2000diffusion} for the analysis
of the well-posedness of a related displacement-pressure weak formulation.
In Section~\ref{sec:well-posed} we establish existence, uniqueness, and stability
for the semidiscrete continuous-in-time approximation of
\eqref{eq:cts1}--\eqref{eq:cts5}. The arguments there also apply to
the weak formulation \eqref{eq:cts1}--\eqref{eq:cts5} itself. We make a remark here on
the initial data $p_0(x)$. In particular, we assume that
\begin{equation}\label{init-cond}
p_0 \in H^1(\Omega), \quad p_0(x) = g_p(x,0) \mbox{ on } \Gd^{pres},  \quad \mbox{and}
\quad K \nabla p_0 \in Z.
\end{equation}
A similar  assumption is also made in \cite{showalter2000diffusion}. In our case, we can
set $z_0 = -K \nabla p_0 \in Z$ and show that it satisfies \eqref{eq:cts4}. We can also
determine $\s_0$, $u_0$, and $\g_0$ by solving the elasticity problem
\eqref{eq:cts1}--\eqref{eq:cts3} with $p_0$ given as data.
We refer to the initial data obtained by this procedure
as compatible initial data. It is needed for the well posedness of the
\eqref{eq:cts1}--\eqref{eq:cts5}, as we will discuss in Section~\ref{sec:well-posed}.

\section{Mixed finite element discretization}
We begin with the discretization of the fully mixed weak formulation
of the poroelasticity system \eqref{eq:cts1}--\eqref{eq:cts5}, based
on mixed finite element methods for elasticity and Darcy flow.  We
then present the multipoint stress - multipoint flux mixed finite
element method, which employs the vertex quadrature rule for the
stress, rotation, and velocity bilinear forms and can be reduced to a
positive definite cell centered system for displacement
and pressure only.

\subsection{Mixed finite element spaces}
        
We next present the MFE discretization of \eqref{eq:cts1}--\eqref{eq:cts5}.
For simplicity, assume that $\O$ is a polygonal domain.  Let $\Tc_h$
be a shape-regular and quasi-uniform \cite{ciarlet2002finite} finite
element partition of $\O$, consisting of triangles and/or 
quadrilaterals in two dimensions and tetrahedra in three
dimensions. Let $h = \max_{E\in\Tc_h}\operatorname{diam}(E)$.  For any
element $E \in \Tc_h$ there exists a bijection mapping $F_E: \Eh \to
E$, where $\Eh$ is a reference element. We denote the Jacobian matrix
by $DF_E$ and let $J_E = \left|\operatorname{det}
(DF_E)\right|$. We note that the mapping is affine with constant $DF_E$ in
the case of simplicial elements and bilinear with linear $DF_E$ in the
case of quadrilaterals. The shape-regularity and quasiuniformity of the
grids imply that 
\begin{equation}\label{mapping-bounds}
  \| DF_E \|_{0,\infty, \Eh} \sim h, \quad \| J_E \|_{0,\infty, \Eh} \sim h^d
  \quad \forall E \in \Tc_h.
\end{equation}
	
Let $\X_h \times V_h \times \Q_h$ be the triple $\left(\BDM_1\right)^d
\times \left(\Pc_0\right)^d \times \left(\Pc_1^{cts}\right)^{d\times
  d, skew}$ on simplicial elements or $\left(\BDM_1\right)^2 \times
\left(\Qc_0\right)^2 \times \left(\Qc_1^{cts}\right)^{2\times 2,
  skew}$ on quadrilaterals, where $\Pc_k$ denotes the space of polynomials of
total degree $k$ and $\Qc_k$ denotes the space of polynomials of
degree $k$ in each variable. This triple has been shown to
be inf-sup stable for mixed elasticity with weak stress symmetry in
\cite{brezzi2008mixed,BBF-reduced,FarFor} on simplices, in \cite{lee2016towards} on
rectangles, and in \cite{msmfe2} on quadrilaterals; see also related spaces
with constant rotations on simplices \cite{arnold2007mixed} and
quadrilaterals \cite{arnold2015mixed}.
For the Darcy flow discretization we consider $Z_h\times W_h$
to be the lowest order $\BDM_1 \times \Pc_0$ MFE spaces
\cite{brezzi1985two,Nedelec86,BBF-book}.
On the reference simplex, these spaces are defined as 
\begin{align}
  &\Xh(\Eh) = \left(\Pc_1(\Eh)^d\right)^d, &&\quad \Vh(\Eh) = \Pc_0(\Eh)^d,
  &&\quad \hat{\Q}(\Eh) = \Pc_1(\Eh)^{d\times d, skew}, \\
	&\Zh(\Eh) = \Pc_1(\Eh)^d, &&\quad \Wh(\Eh) = \Pc_0(\Eh).&&
\end{align}
On the reference square, the spaces are defined as
\begin{align}
\begin{aligned}
\hat{\X}(\Eh) &= \left(\Pc_1(\Eh)^2 + r\,\curl(\xh^2 \yh) + s\, \curl (\xh\yh^2)\right)^2 \\ 
	&= \begin{pmatrix} \alpha_1 \xh + \beta_1 \yh + \gamma_1 + r_1\xh^2 + 2s_1\xh\yh & \alpha_2 \xh + \beta_2 \yh + \gamma_2 - 2r_1\xh\yh - s_1\yh^2 \\ \alpha_3 \xh + \beta_3 \yh + \gamma_3 + r_2\xh^2 + 2s_2\xh\yh & \alpha_4 \xh + \beta_4 \yh + \gamma_4 - 2r_2\xh\yh - s_2\yh^2 \end{pmatrix},  \\
\Vh(\Eh) &= \Pc_0(\Eh)^d, \quad
\hat{\Q}(\Eh) = \Qc_1(\Eh)^{2\times 2, skew}, \\
\Zh(\Eh) &= \Pc_1(\Eh)^2 + r\,\curl(\xh^2 \yh) + s\, \curl (\xh\yh^2)
= \begin{pmatrix}
	\alpha_5 \xh + \beta_5 \yh + \gamma_5 + r_3\xh^2 + 2s_3\xh\yh \\
	\alpha_6 \xh + \beta_6 \yh + \gamma_6 - 2r_3\xh\yh - s_3\yh^2
	\end{pmatrix},\\
	\Wh(\Eh) &= \Pc_0(\Eh).
\end{aligned}
\end{align}
These spaces satisfy
\begin{align*}
&\dvr \Xh(\Eh) = \Vh(\Eh), \, \dvr \Zh(\Eh) = \Wh(\Eh); \,\,
  \forall \th \in \Xh (\Eh),\, \forall \hat\zeta\in\Zh(\Eh),\, \forall \eh \in \partial \Eh,
  \,\, \th\, \nh_{\eh} \in \Pc_1(\eh)^d, \, \hat\zeta\cdot \nh_{\eh} \in \Pc_1(\eh).
\end{align*}
It is known \cite{brezzi1985two,BBF-book} that the degrees of
freedom for $\BDM_1$ can be chosen to be the values of the normal
fluxes at any two points on each edge $\eh$ of $\Eh$ in 2d 
or any three points one each face $\eh$ of $\Eh$ in 3d;
similarly for the normal stresses in the
case of $(\BDM_1)^d$. Here we choose these points to be at the
vertices of $\eh$ for both the velocity and stress spaces. This choice
is motivated by the use of the vertex quadrature rule introduced in the next
section.
	
To define the above spaces on any physical element $E \in \Tc_h$,
the following transformations are used
\begin{align*}
  &\tau \overset{\Pc}\leftrightarrow \hat{\tau} :
  \tau^T = \frac{1}{J_E} DF_E \hat{\tau}^T \circ F_E^{-1},
  & &v \leftrightarrow \hat{v} : v = \hat{v} \circ F_E^{-1}, 
  & & \xi \leftrightarrow \hat{\xi} : \xi = \hat{\xi} \circ F_E^{-1}, \\
  &\zeta \overset{\Pc}\leftrightarrow \hat\zeta : \zeta = \frac{1}{J_E} DF_E
  \hat{\zeta}\circ F_E^{-1}, 
&& w\leftrightarrow \hat{w} : w = \hat{w} \circ F_E^{-1},&&
\end{align*}
for $\tau \in \X$, $v \in V$, $\xi \in \Q$, $\zeta \in Z$ and $w\in
W$.  The velocity vector and stress tensor are mapped by the Piola
transformation, where the stress is transformed row-wise. The Piola
transformation preserves the normal components and the divergence of
the stress and velocity on element edges or faces. In particular, it
can be shown that
\begin{equation}\label{piola-prop}
\tau\, n_e = \frac{1}{|J_E DF^{-T}\hat{n}_{\eh}|_{\mathbb{R}^d}} \hat{\tau}\, \hat{n}_{\eh},
\quad
\zeta \cdot n_e = \frac{1}{|J_E DF^{-T}\hat{n}_{\eh}|_{\mathbb{R}^d}} \hat{\zeta}\cdot \hat{n}_{\eh},
\quad \dvr \tau = \frac{1}{J_E} \dvr \hat{\tau},
\quad \dvr \zeta = \frac{1}{J_E} \dvr \hat\zeta,
\end{equation}
where $|\cdot|_{\mathbb{R}^d}$ denotes the Euclidean vector norm.
The finite element spaces on $\Tc_h$ are defined as
\begin{align}
\begin{aligned} \label{mixed-spaces}
  \X_h &= \{\tau \in \X:  \tau|_E \overset{\Pc}\leftrightarrow \hat{\tau},
  \: \hat{\tau} \in \hat{\X}(\Eh) \quad \forall E\in\mathcal{T}_h\},  \\
  V_h &= \{v \in V:  v|_E \leftrightarrow \hat{v},\:
  \hat{v} \in \hat{V}(\Eh) \quad \forall E\in\mathcal{T}_h\}, \\
\Q_h &= \{\xi \in H^1(\Omega,\N):  \xi|_E \leftrightarrow \hat{\xi},
\: \hat{\xi} \in \hat{\Q}(\Eh) \quad \forall E\in\mathcal{T}_h\}, \\
Z_h &= \{\zeta \in Z: \zeta|_E \overset{\Pc}\leftrightarrow \hat{\zeta},\:
\hat{\zeta} \in \hat{Z}(\Eh) \quad \forall E\in\mathcal{T}_h\},  \\
W_h &= \{w \in W: w|_E \leftrightarrow \hat{w}, \: \hat{w} \in \hat{W}(\Eh)
\quad \forall E\in\mathcal{T}_h\}.
\end{aligned}
\end{align}

\begin{remark}
  Due to \eqref{piola-prop}, on each $E \in \mathcal{T}_h$, it holds
  that $\dvr \X_h = \frac{1}{J_E} V_h$ and $\dvr Z_h = \frac{1}{J_E}
  W_h$. In several places we will make choices for test functions, on
  each E, $v = J_E\, \dvr\tau$ or $w = J_E\, \dvr\zeta$. On quadrilaterals, $J_E$ is linear
  and positive. On simplices, $J_E$ is a positive constant, so in this case
  $\dvr \X_h = V_h$ and $\dvr Z_h = W_h$.
  \end{remark}

\subsection{The coupled $\BDM_1$ mixed finite element method}\label{sec:BDM}
With the finite element spaces defined above, the semidiscrete
five-field mixed finite element approximation
of the Biot poroelasticity system 
\eqref{eq:cts1}--\eqref{eq:cts5} reads as follows:
find $(\sigma_h,u_h,\gamma_h,z_h,p_h):[0,T] \mapsto
\X_h \times V_h \times \Q_h \times Z_h \times W_h$ such that, for a.e. $t \in (0,T)$,
\begin{align}
	  &\inp[A(\sigma_h + \a p_h I)]{\tau} + \inp[u_h]{\dvr{\tau}}
          + \inp[\gamma_h]{\tau} = \gnp[g_u]{\t\,n}_{\Gamma_D^{displ}}, &&
          \forall \tau \in \X_h, \label{eq:mfe1}\\
	  &\inp[\dvr{\sigma_h}]{v} = - \inp[f]{v}, && \forall v \in V_h,
          \label{eq:mfe2}\\
	&\inp[\sigma_h]{\xi}  = 0, && \forall \xi \in \Q_h, \label{eq:mfe3}\\
	  &\inp[\K z_h]{\zeta} - \inp[p_h]{\dvr{\zeta}}  =
          -\gnp[g_p]{\zeta\cdot n}_{\Gd^{pres}}, && \forall \zeta \in Z_h,
          \label{eq:mfe4}\\
	&\inp[c_0\dt{p_h}]{w} + \a\inp[\dt A(\sigma_h + \a p_h I)]{wI} 
+ \inp[\dvr{z_h}]{w}   = \inp[q]{w}, && \forall w \in W_h, \label{eq:mfe5}
\end{align}
with initial condition $p_h(0) = p_{h,0}$, where $p_{h,0}$ is a suitable approximation
of $p_0$. The
convergence of the above method is studied in \cite{Lee-Biot-five-field},
where it is shown that the method is robust for small storage
coefficient and for nearly incompressible materials. With an implicit
time discretization, it requires the solution of a large five-field
saddle point system at each time step, which is computationally
expensive. Motivated by the MFMFE \cite{wheeler2006multipoint} and
MSMFE \cite{msmfe1,msmfe2} methods, in the next sections we develop a
coupled MSMFE--MFMFE method based on a vertex quadrature rule that
allows for local elimination of the stress, rotation, and velocity
without loss of accuracy, resulting in a significantly more efficient
positive-definite cell-centered displacement-pressure system.
	
\subsection{A quadrature rule}
For any element-wise continuous vector or
tensor functions $\phi$ and $\psi$ on $\Omega$, we denote by
$$
(\varphi,\psi)_Q = \sum_{E \in \Tc_h}(\varphi,\psi)_{Q,E }
$$
the application of the element-wise vertex quadrature rule for computing
$(\varphi,\psi)$.  The integration on any element $E$ is performed by
mapping to the reference element $\Eh$. Let $\tilde\phi$ and
$\tilde\psi$ be the mapped functions on $\Eh$, using the standard
change of variables. Since
$(\phi,\psi)_E = (\tilde\phi,\tilde\psi J_E)_{\Eh}$, we define
$$
(\phi,\psi)_{Q,E} = \frac{|\Eh|}{s}\sum_{i=1}^s
\tilde\phi(\rh_i)\cdot\tilde\psi(\rh_i) J_E(\rh_i) = \frac{|\Eh|}{s}\sum_{i=1}^s
\phi(\r_i)\cdot\psi(\r_i)J_E(\rh_i),
$$
where $s$ is the number of vertices of $E$, $\r_i$ and $\rh_i$,
$i = 1,\ldots,s$, are the vertices of $E$ and $\Eh$, respectively,
and $\cdot$ has a meaning of inner product for both vector and
tensor valued functions.

The quadrature rule will be applied to the velocity, stress, and
stress-rotation bilinear forms. All three variables have degrees of
freedom associated with the mesh vertices. The quadrature rule
decouples degrees of freedom associated with a vertex from the rest of
the degrees of freedom, resulting in block-diagonal matrices
corresponding to these bilinear forms. Therefore the velocity, stress,
and rotation can be locally eliminated, reducing the method to solving
a cell-centered pressure-displacement system. More details on this
reduction will be provided in the following sections. 

The analysis of the MSMFE--MFMFE method will utilize the following
continuity and coercivity properties of the quadrature bilinear forms. 

\begin{lemma}\label{quad-coercive}
There exist positive constants $C_1$ and $C_2$ independent of $h$,
such that for any linear uniformly bounded and positive-definite
operator $L$ and for all $\phi, \psi \in \X_h, \Q_h, Z_h, W_h$,
\begin{equation} \label{quad-coercive-eqn}
\inp[L\phi]{\phi}_Q \ge C_1 \| \phi \|^2, \quad
\inp[L\phi]{\psi}_Q \le C_2 \| \phi \| \| \psi \|.
\end{equation}
\end{lemma}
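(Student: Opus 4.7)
My plan is to reduce the estimates to the reference element via $F_E$, use finite-dimensionality to obtain norm equivalence between the quadrature form and $L^2$ on $\hat E$, and then transfer back to the physical element using the mapping bounds \eqref{mapping-bounds}. The key observation is that the vertex evaluation map $\hat\phi \mapsto \bigl(\hat\phi(\hat r_i)\bigr)_{i=1}^s$ is injective on each of the reference spaces $\hat\X(\hat E)$, $\hat\Q(\hat E)$, $\hat Z(\hat E)$, $\hat W(\hat E)$. For $\hat W(\hat E) = \Pc_0(\hat E)$ this is trivial; for the continuous (bi)linear skew space $\hat\Q(\hat E)$ the vertex values are exactly the nodal degrees of freedom. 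For the $\BDM_1$-type spaces $\hat Z(\hat E)$ and $\hat\X(\hat E)$ the $2$d degrees of freedom are chosen (as stated just before Section~3.3) to be the normal components at the two endpoints of each edge; at any vertex $\hat r_i$ the outward normals of the two incident edges span $\R^d$, so specifying these normal components is equivalent to specifying the full vector/tensor value at $\hat r_i$. Hence injectivity holds, and on the finite-dimensional reference spaces the quadrature seminorm $\bigl(\sum_i |\hat\phi(\hat r_i)|^2\bigr)^{1/2}$ is equivalent to $\|\hat\phi\|_{\hat E}$.

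Next I would transfer this equivalence to a physical element $E$. By the definition of the quadrature rule and \eqref{mapping-bounds}, $J_E(\hat r_i) \sim h^d$, so
\[
(\phi,\phi)_{Q,E} = \frac{|\hat E|}{s}\sum_{i=1}^s |\tilde\phi(\hat r_i)|^2 J_E(\hat r_i)
\sim h^d \sum_{i=1}^s |\tilde\phi(\hat r_i)|^2
\sim h^d \|\tilde\phi\|_{\hat E}^2.
\]
On the other hand, using the standard change of variables for $\Q_h$ and $W_h$, and the Piola transformation \eqref{piola-prop} together with $\|DF_E\|_{0,\infty,\hat E}\sim h$ and $\|J_E\|_{0,\infty,\hat E}\sim h^d$ for $\X_h$ and $Z_h$, one obtains the scaling $\|\phi\|_E^2 \sim h^d \|\tilde\phi\|_{\hat E}^2$ in every case. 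Combining these gives $(\phi,\phi)_{Q,E} \sim \|\phi\|_E^2$, with constants depending only on the shape-regularity of $\Tc_h$. Summing over $E\in\Tc_h$ yields $(\phi,\phi)_Q \sim \|\phi\|^2$ uniformly in $h$.

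To incorporate the operator $L$, I would observe that $L$ being bounded and uniformly positive definite means that on each element $(L\phi,\psi)_{Q,E}$ defines a bilinear form which is equivalent to $(\phi,\psi)_{Q,E}$ in the same finite-dimensional sense (pointwise evaluation of $L$ at the vertices yields a symmetric positive definite matrix whose spectral bounds are controlled by the uniform bounds on $L$). Cauchy--Schwarz for the quadrature sum then produces the continuity bound $(L\phi,\psi)_Q \le C_2\|\phi\|\|\psi\|$, and positive definiteness gives $(L\phi,\phi)_Q \ge C_1\|\phi\|^2$ after combining with the equivalence established above.

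The main obstacle I anticipate is bookkeeping the scaling under the Piola transformation on quadrilaterals, where $DF_E$ and $J_E$ are not constant; one must verify carefully that the pointwise vertex values of $\tilde\phi$ on $\hat E$ do control the $L^2(\hat E)$ norm uniformly, despite the non-constant Jacobian entering through $\tilde\phi = J_E DF_E^{-1}(\phi\circ F_E)$. This is handled by shape regularity, which ensures $\|DF_E\|_{0,\infty,\hat E}\|DF_E^{-1}\|_{0,\infty,E}\le C$, so that the equivalence on $\hat E$ transfers to $E$ with constants independent of $h$. Aside from this scaling check the argument is routine.
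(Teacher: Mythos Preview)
Your argument is correct and is precisely the standard one: unisolvence of the vertex degrees of freedom on the reference element gives a norm equivalence there, and the scaling via \eqref{mapping-bounds} (together with the pointwise equivalence $|\tilde\phi|\sim h^{1-d}|\hat\phi|$ for Piola-mapped functions) transfers it to $E$ uniformly in $h$. The paper itself does not actually carry out this proof; it simply cites \cite{wheeler2006multipoint,msmfe1,msmfe2} for $\X_h,\Q_h,Z_h$ and remarks that the $W_h$ case is analogous, so your write-up is in fact the content behind those citations rather than a different route. One small notational slip: in your last paragraph the formula $J_E DF_E^{-1}(\phi\circ F_E)$ is $\hat\phi$, not $\tilde\phi$ (the paper's $\tilde\phi$ is just $\phi\circ F_E$); the distinction matters only for bookkeeping, and your scaling argument goes through once the two are kept straight.
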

\begin{proof}
The proof for functions in $\X_h, \Q_h, Z_h$ has been shown in
\cite{wheeler2006multipoint,msmfe1,msmfe2}. The proof for functions in $W_h$
is similar.
\end{proof}
Lemma~\ref{quad-coercive} implies the following norm equivalence.
\begin{corollary}\label{quad-inner-prod}
$\inp[L\phi]{\phi}_Q^{1/2}$
is a norm equivalent to $\|\phi\|$, which will be denoted by
$\|L^{1/2}\phi\|_Q$.
\end{corollary}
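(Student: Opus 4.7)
The plan is to derive the corollary directly from Lemma~\ref{quad-coercive} together with general facts about symmetric positive-definite bilinear forms on finite-dimensional spaces. First I would observe that the bilinear form $(\phi,\psi) \mapsto (L\phi,\psi)_Q$ is symmetric (inheriting symmetry from $L$) and positive-definite on each of the discrete spaces $\X_h$, $\Q_h$, $Z_h$, $W_h$, since the first bound in \eqref{quad-coercive-eqn} gives $(L\phi,\phi)_Q \ge C_1 \|\phi\|^2$, which is strictly positive for $\phi \ne 0$. Consequently $(L\cdot,\cdot)_Q$ is an inner product on each of these spaces, and its induced quantity $\|\phi\|_{L,Q} := (L\phi,\phi)_Q^{1/2}$ is automatically a norm: nonnegativity and definiteness follow from positive-definiteness, homogeneity is immediate from bilinearity, and the triangle inequality follows from the Cauchy--Schwarz inequality applied to this inner product.

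Next I would establish the norm equivalence. The lower bound is a direct rewriting of the first inequality in \eqref{quad-coercive-eqn}, giving
\begin{equation*}
C_1^{1/2} \|\phi\| \le (L\phi,\phi)_Q^{1/2}.
\end{equation*}
For the upper bound, I would apply the second inequality in \eqref{quad-coercive-eqn} with $\psi = \phi$, which yields $(L\phi,\phi)_Q \le C_2 \|\phi\|^2$, and therefore $(L\phi,\phi)_Q^{1/2} \le C_2^{1/2} \|\phi\|$. Combining the two bounds produces the stated equivalence with constants independent of $h$.

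There is no real obstacle here: the entire content of the corollary is packaged inside Lemma~\ref{quad-coercive}. The only conceptual point worth spelling out is why $\|\cdot\|_{L,Q}$ is indeed a norm and not merely a seminorm, which is handled by the coercivity bound $(L\phi,\phi)_Q \ge C_1 \|\phi\|^2$ ensuring definiteness. I would conclude by noting that the notation $\|L^{1/2}\phi\|_Q$ is justified because $(L\cdot,\cdot)_Q$ is a symmetric positive-definite inner product on each finite-dimensional space, so it admits a (unique) positive square root operator in the discrete sense; this is a matter of notation rather than a separate argument.
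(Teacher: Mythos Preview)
Your proposal is correct and matches the paper's approach: the paper does not write out a proof at all but simply states that the corollary follows from Lemma~\ref{quad-coercive}, which is precisely what you unpack. Your explanation that coercivity gives definiteness (so $\|\cdot\|_{L,Q}$ is a genuine norm) and that the two bounds in \eqref{quad-coercive-eqn} with $\psi=\phi$ yield the two-sided equivalence is exactly the intended argument.
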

  
\subsection{The coupled multipoint stress-multipoint flux mixed finite
  element method}

We first note that there is a slight difference in the incorporation
of the Dirichlet boundary conditions between the simplicial and
quadrilateral grids. In particular, in the case of quadrilaterals, the
$L^2$ projection of the boundary data onto the space of piecewise
constants must be used in order to obtain optimal approximation of the
boundary term. On the other hand, such projection should not be used
on simplices, since it would result in non-optimal approximation. The
difference is due to different properties of the quadrature rules on
simplicial and quadrilateral grids, see
\cite{WheXueYot-msmortar,msmfe1,msmfe2}. For the conformity and
simplicity of the presentation, for the rest of the paper we consider
$g_u = g_p = 0$.

Our method, referred to as the MSMFE--MFMFE method, in its semidiscrete
form is defined as follows: find $(\sigma_h,u_h,\gamma_h,z_h,p_h):[0,T] \mapsto
\X_h \times V_h \times \Q_h \times Z_h \times W_h$ such that $p_h(0) = p_{h,0}$ and,
for a.e. $t \in (0,T)$,
\begin{align}
	  &\inp[A(\sigma_h + \a p_h I)]{\tau}_Q + \inp[u_h]{\dvr{\tau}}
          + \inp[\gamma_h]{\tau}_Q = 0, &&
          \forall \tau \in \X_h, \label{eq:msfmfe1}\\
	  &\inp[\dvr{\sigma_h}]{v} = - \inp[f]{v}, && \forall v \in V_h,
          \label{eq:msfmfe2}\\
	&\inp[\sigma_h]{\xi}_Q  = 0, && \forall \xi \in \Q_h, \label{eq:msfmfe3}\\
	  &\inp[\K z_h]{\zeta}_Q - \inp[p_h]{\dvr{\zeta}}  = 0,
          && \forall \zeta \in Z_h,
          \label{eq:msfmfe4}\\
	&\inp[c_0\dt{p_h}]{w} + \a\inp[\dt A(\sigma_h + \a p_h I)]{wI}_Q
+ \inp[\dvr{z_h}]{w}   = \inp[q]{w}, && \forall w \in W_h. \label{eq:msfmfe5}
\end{align}
\begin{remark}
We note that the quadrature rule is employed for both 
$\inp[A(\sigma_h + \a p_h I)]{\tau}_Q$ in \eqref{eq:msfmfe1}
and $\a\inp[\dt A(\sigma_h + \a p_h I)]{wI}_Q$ in \eqref{eq:msfmfe5},
since these two terms will be combined to obtain a coercive term in the
well-posedness analysis, while only quadrature rule on the stress term
$\inp[A\sigma_h]{\tau}_Q$ in \eqref{eq:msfmfe1} is needed for local
stress elimination.
\end{remark}

In the next sections we proceed with establishing existence,
uniqueness, stability, and error analysis for the semidiscrete
MSMFE--MFMFE method \eqref{eq:msfmfe1}--\eqref{eq:msfmfe5}. In
Section~\ref{sec:discr} we present the fully-discrete MSMFE--MFMFE
method and discuss the reduction of the algebraic system at each time
step to a positive definite cell-centered
displacement-pressure system.

\section{Existence and uniqueness for the semidiscrete MSMFE--MFMFE method}\label{sec:well-posed}

We first state the inf-sup stability of the mixed Darcy and elasticity
spaces, which will be utilized in the analysis. It is known
\cite{BBF-book} that the spaces $Z_h\times W_h$ satisfy the
inf-sup condition
\begin{align}\label{inf-sup-darcy}
  \exists \, \beta_1 > 0 \mbox{ such that } \forall \, w_h \in W_h, \quad
  \sup\limits_{0\neq \zeta \in Z_h} \frac{\inp[w_h]{\dvr{\zeta}} }{\|\zeta\|_{\dvr}}
  \ge \beta_1 \|w_h\|.
\end{align}
The inf-sup stability for the mixed elasticity spaces
$\X_h\times V_h \times \Q_h$ with quadrature has been studied
in \cite{msmfe1} on simplices and in \cite{msmfe2} on quadrilaterals.
In the case of quadrilaterals, the following
assumptions on the grid is needed \cite{msmfe2}:

\begin{enumerate} [label={\bf(M\arabic*)},align=left]
\item \label{M1} Each element $E$ has at most one edge on
  $\Gamma_N^{stress}$,

\item \label{M2} The mesh size $h$ is sufficiently small and there
  exists a constant $C$ such that for every pair of neighboring
  elements $E$ and $\tilde E$ such that $E$ or $\tilde E$ is a
  non-parallelogram, and every pair of edges $e \subset \partial E \setminus \partial \tilde E$,
  $\tilde e \subset \partial \tilde E \setminus \partial E$ that share a vertex,
	\begin{align*}
	|\r_{e}-\r_{\tilde e}|_{\mathbb{R}^2} \leq Ch^2,
	\end{align*}
	where $\r_e$ and $\r_{\tilde e}$ are the vectors corresponding to $e$ 
and $\tilde e$, respectively.
\end{enumerate}

We note that \ref{M2} can be thought of as a smoothness assumption on
the grid and it is not needed if the grid consists entirely of
parallelograms. For the rest of the paper we will tacitly assume that
\ref{M1}--\ref{M2} hold on quadrilaterals.

We have the following inf-sup condition on simplices \cite{msmfe1} and
quadrilaterals \cite{msmfe2}:
\begin{align}\label{inf-sup-elast}
  \exists \, \beta_2 > 0 \mbox{ such that } \forall \, v_h \in V_h, \, \xi_h \in \Q_h,
  \quad \sup\limits_{0\neq\t \in \X_h} \frac{\inp[v_h]{\dvr{\t}}
    + \inp[\xi_h]{\t}_Q}{\|\t\|_{\dvr}} \ge \beta_2(\|v_h\| + \|\xi_h\|).
\end{align}

We note that the semidiscrete method \eqref{eq:msfmfe1}--\eqref{eq:msfmfe5} 
is a system of differential-algebraic equations and the standard theory for
ordinary differential equations cannot be directly applied. Instead, the well
posedness analysis of \eqref{eq:msfmfe1}--\eqref{eq:msfmfe5} will be based
on the existence theory for degenerate parabolic systems, in particular 
\cite[Theorem 6.1(b)]{showalter2013monotone}.
\begin{theorem}  \label{thmsho61b}
	Let the linear, symmetric and monotone operator $\mathcal{N}$ be given for the real vector space $E$ to its algebraic
	dual $E^{*}$, and let $E_{b}'$ be the Hilbert space which is the dual of
        $E$ with the seminorm
	\[
        | x |_{b} \ = \ (\mathcal{N} x \, (x))^{1/2}  \, , \quad x \in E.
        \]
	Let $\mathcal{M} \subset E \times E_{b}'$ be a relation with domain 
$D = \{ x \in E \, : \, \mathcal{M}(x) \neq \emptyset \}$. 
Assume $\mathcal{M}$ is monotone and $Rg( \mathcal{N} + \mathcal{M}) \, = \, E_{b}'$. Then, for each $x_{0} \in D$ and
for each $\mathcal{F} \in W^{1 , 1}(0 , T ; E_{b}')$, there is a solution $x$ of
\begin{equation}\label{parabolic-system}        
  \ddt\left( \mathcal{N} x(t) \right) 
  +  \mathcal{M}\left( x(t) \right) \ni \mathcal{F}(t) \, , \quad a.e. \ \ 0 < t < T,
\end{equation}
with
\[ \mathcal{N} x \in W^{1 , \infty}(0 , T; E_{b}') \, , \  x(t) \in D \, ,  \
\mbox{\textit{for all} } 0 \le t \le T \, , 
 \ \mbox{ \textit{ and } } \mathcal{N}x(0) \, = \, \mathcal{N}x_{0} \, .   \]
\end{theorem}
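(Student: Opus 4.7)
The plan is to prove existence by the classical Rothe (implicit time-discretization) argument combined with monotone operator limit techniques. Because $\mathcal{N}$ has a possibly nontrivial kernel, the natural state control is only in the seminorm $|\cdot|_b$, and the structural hypothesis that powers the whole proof is the range condition $\operatorname{Rg}(\mathcal{N}+\mathcal{M}) = E_b'$, which will be used both to solve each discrete step and, at the end, to identify the limit via Minty's lemma.

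First, I would discretize in time with step $\Delta t = T/N$: given $x_0 \in D$ and $\mathcal{F}_n = \frac{1}{\Delta t}\int_{(n-1)\Delta t}^{n\Delta t}\mathcal{F}(s)\,ds$, produce $x_n \in D$ iteratively from the implicit Euler step
$$
(\mathcal{N} + \Delta t\,\mathcal{M})(x_n) \ni \Delta t\,\mathcal{F}_n + \mathcal{N}\, x_{n-1},
$$
which is exactly the discretization of \eqref{parabolic-system}. Solvability at each step follows from the range hypothesis: a standard Minty/Yosida rescaling shows $\operatorname{Rg}(\mathcal{N} + \Delta t\,\mathcal{M}) = E_b'$ for every $\Delta t > 0$. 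Next I would derive two a priori bounds. Pairing the $n$-th equation with $x_n - x_*$ for a fixed pair $(x_*,\eta_*)$ in the graph of $\mathcal{M}$ and using symmetry of $\mathcal{N}$ together with monotonicity of $\mathcal{M}$ gives a discrete energy inequality of the form
$$
\tfrac{1}{2}|x_n|_b^2 \;\le\; \tfrac{1}{2}|x_{n-1}|_b^2 + \Delta t\,\langle \mathcal{F}_n - \eta_*,\, x_n - x_*\rangle,
$$
which, summed and combined with a discrete Gronwall argument, yields $\max_n |x_n|_b \le C$. Taking the difference of two consecutive steps and pairing with $x_n - x_{n-1}$ absorbs the $\mathcal{M}$-difference by monotonicity; the regularity $\mathcal{F}\in W^{1,1}(0,T;E_b')$ then gives the discrete bounded-variation bound $\sum_n \|\mathcal{N}(x_n - x_{n-1})\|_{E_b'} \le C$, which is the key estimate for time regularity of $\mathcal{N} x$.

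Finally, I would pass to the limit $\Delta t \to 0$. Let $\bar x_{\Delta t}$ be the piecewise-constant interpolant of $\{x_n\}$ and $\hat y_{\Delta t}$ the piecewise-linear interpolant of $\{\mathcal{N} x_n\}$. The estimates above give weak-$\ast$ compactness of $\hat y_{\Delta t}$ in $W^{1,\infty}(0,T;E_b')$, so along a subsequence $\hat y_{\Delta t}\to y \in W^{1,\infty}(0,T;E_b')$; writing $y = \mathcal{N} x$, the selections $\eta_n \in \mathcal{M}(x_n)$ converge weakly in $E_b'$ to some $\eta$ satisfying $y'+\eta = \mathcal{F}$ a.e. The main obstacle is identifying $\eta(t) \in \mathcal{M}(x(t))$ in the limit: since $\bar x_{\Delta t}$ is controlled only in the seminorm $|\cdot|_b$, strong convergence in $E$ is unavailable, so one cannot directly pass to the limit in the pairing $\langle \eta_n, x_n\rangle$. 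The resolution is exactly where the range condition is used a second time: via Minty's lemma it forces the (appropriate closure of) $\mathcal{M}$ to be maximal monotone on $E \times E_b'$, and combining this with the lower-semicontinuity of the energy $\tfrac12\langle \mathcal{N}\cdot,\cdot\rangle$ and the limit of the discrete energy identity lets one establish $\limsup\langle \eta_n, x_n - v\rangle \ge \langle \eta^*, x - v\rangle$ for every admissible $(v,\eta^*)$ in the graph of $\mathcal{M}$, which by maximality places $(x,\eta)$ itself in that graph. The initial condition $\mathcal{N} x(0) = \mathcal{N} x_0$ then follows from $\hat y_{\Delta t}(0) = \mathcal{N} x_0$ and the continuity of $\mathcal{N} x$ in $E_b'$.
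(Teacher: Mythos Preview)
The paper does not prove this statement; it is quoted verbatim as Theorem~6.1(b) from Showalter's monograph \cite{showalter2013monotone} and invoked as a black box in the proof of Theorem~\ref{well-posed}. So there is no ``paper's own proof'' to compare against.

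Your Rothe-method outline is the standard route to such degenerate evolution results, and you have correctly identified the two delicate points: (i) only seminorm control of the iterates, and (ii) identification of the nonlinear limit via maximality/Minty. One step you should not gloss over is the claim that $\operatorname{Rg}(\mathcal{N}+\mathcal{M})=E_b'$ implies $\operatorname{Rg}(\mathcal{N}+\Delta t\,\mathcal{M})=E_b'$ for every $\Delta t>0$ ``by Minty/Yosida rescaling.'' For a merely monotone (not a priori maximal monotone) relation $\mathcal{M}$ and a degenerate $\mathcal{N}$, this is not an automatic rescaling argument: the surjectivity of $\mathcal{N}+\mathcal{M}$ does not by itself force $\mathcal{M}$ to be maximal monotone on $E\times E_b'$ in the usual sense, so the perturbation to $\Delta t\,\mathcal{M}$ needs justification. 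Showalter's original proof sidesteps this by first replacing $\mathcal{M}$ with its Yosida approximation (which is Lipschitz), solving the regularized problem, and then passing to the limit in the Yosida parameter; you may want to consult that argument before committing to the discrete-step solvability as stated.
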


\begin{theorem}\label{well-posed}
  For each $f \in W^{1 , \infty}(0 , T; L^2(\Omega))$, $q \in W^{1 , \infty}(0 , T; L^2(\Omega))$,
  and compatible initial data $(\sigma_{h,0},u_{h,0},\gamma_{h,0},z_{h,0},p_{h,0})$,
  the semidiscrete MSMFE--MFMFE method \eqref{eq:msfmfe1}--\eqref{eq:msfmfe5}
  has a unique solution $(\sigma_h,u_h,\gamma_h,z_h,p_h) \in
  W^{1,\infty}(0,T;L^2(\Omega,\M)) \cap L^\infty(0,T;\X_h)
  \times L^\infty(0,T;V_h) \times
L^\infty(0,T;\Q_h) \times L^\infty(0,T;Z_h) \times
W^{1,\infty}(0,T;W_h)$.
\end{theorem}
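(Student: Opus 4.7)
The plan is to cast the semidiscrete system \eqref{eq:msfmfe1}--\eqref{eq:msfmfe5} into the abstract degenerate-parabolic form of Theorem~\ref{thmsho61b}. The main preparatory step, forced by the differential-algebraic structure of the system, is to differentiate \eqref{eq:msfmfe1} in time and promote $\tilde u_h := \partial_t u_h$ and $\tilde\gamma_h := \partial_t \gamma_h$ to primary unknowns. After this change of variables, every remaining time derivative acts on either $\sigma_h$ or $p_h$, both of which already appear inside $A(\sigma_h + \a p_h I)$, so a single symmetric operator $\mathcal{N}$ will accommodate both \eqref{eq:msfmfe1} and \eqref{eq:msfmfe5}.

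Setting $E = \X_h \times V_h \times \Q_h \times Z_h \times W_h$ and, after flipping the sign of \eqref{eq:msfmfe2}--\eqref{eq:msfmfe3} so the elasticity saddle-point block acquires the familiar skew structure, I would define, for $X = (\sigma_h,\tilde u_h,\tilde\gamma_h,z_h,p_h)$ and $Y = (\tau,v,\xi,\zeta,w)$,
\begin{align*}
\langle\mathcal{N} X, Y\rangle &= \inp[A(\sigma_h + \a p_h I)]{\tau}_Q + \a\inp[A(\sigma_h + \a p_h I)]{wI}_Q + c_0 \inp[p_h]{w}, \\
\langle\mathcal{M} X, Y\rangle &= \inp[\tilde u_h]{\dvr \tau} + \inp[\tilde\gamma_h]{\tau}_Q - \inp[\dvr \sigma_h]{v} - \inp[\sigma_h]{\xi}_Q + \inp[\K z_h]{\zeta}_Q - \inp[p_h]{\dvr \zeta} + \inp[\dvr z_h]{w},
\end{align*}
with forcing $\langle\mathcal{F}, Y\rangle = (f, v) + (q, w)$. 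Symmetry of $A$ and $K^{-1}$ make $\mathcal{N}$ symmetric, and
\begin{equation*}
\langle\mathcal{N} X, X\rangle = \|A^{1/2}(\sigma_h + \a p_h I)\|_Q^2 + c_0\|p_h\|^2 \ge 0
\end{equation*}
(using Corollary~\ref{quad-inner-prod}) gives monotonicity; monotonicity of $\mathcal{M}$ follows because every elasticity and Darcy cross-term cancels in pairs when $\mathcal{M} X$ is tested against $X$, leaving only $\|\K^{1/2} z_h\|_Q^2 \ge 0$.

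The main condition to check is the range condition $\mathrm{Rg}(\mathcal{N}+\mathcal{M}) = E_b'$, which in finite dimensions reduces to injectivity of the stationary problem. Testing the homogeneous stationary system against $X$ itself yields $\|A^{1/2}(\sigma_h + \a p_h I)\|_Q^2 + c_0\|p_h\|^2 + \|\K^{1/2} z_h\|_Q^2 = 0$, so $z_h = 0$ and $\sigma_h + \a p_h I = 0$. The surviving Darcy equation $\inp[p_h]{\dvr\zeta}=0$ combined with \eqref{inf-sup-darcy} forces $p_h = 0$ and hence $\sigma_h = 0$, and the surviving elasticity equation $\inp[\tilde u_h]{\dvr\tau} + \inp[\tilde\gamma_h]{\tau}_Q = 0$ combined with \eqref{inf-sup-elast} forces $\tilde u_h = \tilde\gamma_h = 0$. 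Theorem~\ref{thmsho61b} then produces $X$ with $\mathcal{N} X \in W^{1,\infty}(0,T; E_b')$ and $X(t) \in E$ for every $t$; the original $u_h,\gamma_h$ are recovered by time integration $u_h(t) = u_{h,0} + \int_0^t \tilde u_h$, $\gamma_h(t) = \gamma_{h,0} + \int_0^t \tilde\gamma_h$, and compatibility of the initial data guarantees that the time-integrated version of the differentiated \eqref{eq:msfmfe1} coincides with \eqref{eq:msfmfe1} for every $t$. Uniqueness follows from the same energy identity applied to the difference of any two solutions.

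The technically delicate part is the claimed $W^{1,\infty}$-regularity of $\sigma_h$ and $p_h$ in the locking-free regime $c_0 = 0$. When $c_0 > 0$, both bounds are immediate from $\mathcal{N} X \in W^{1,\infty}$, since $\mathcal{N}$ then separately controls $A(\sigma_h + \a p_h I)$ and $c_0 p_h$. When $c_0 = 0$, $\mathcal{N}$ controls only $\sigma_h + \a p_h I$, and $\partial_t p_h$ must be obtained by a separate argument: I would differentiate \eqref{eq:msfmfe4}--\eqref{eq:msfmfe5} in time, test the resulting Darcy subsystem against $(\partial_t z_h, \partial_t p_h)$, and use \eqref{inf-sup-darcy} together with the $L^\infty$-bound on $\partial_t A(\sigma_h + \a p_h I)$ already inherited from $\mathcal{N} X \in W^{1,\infty}$ to control $\partial_t p_h$; the corresponding bound on $\partial_t \sigma_h$ then drops out of $\partial_t \sigma_h = \partial_t(\sigma_h + \a p_h I) - \a (\partial_t p_h) I$. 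This is the route alluded to in the introduction and is where I expect the main technical effort of the proof.
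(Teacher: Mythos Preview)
Your overall strategy---differentiate \eqref{eq:msfmfe1}, promote $\partial_t u_h$ and $\partial_t\gamma_h$ to primary variables, and cast the system as $\frac{d}{dt}(\mathcal{N}X)+\mathcal{M}X=\mathcal{F}$---is exactly what the paper does, and your verification of monotonicity and of the range condition via the two inf-sup conditions \eqref{inf-sup-darcy}--\eqref{inf-sup-elast} matches the paper's argument essentially line by line.

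There is, however, one genuine gap. Theorem~\ref{thmsho61b} requires $\mathcal{F}\in W^{1,1}(0,T;E_b')$, and the dual space $E_b'$ is determined by the seminorm $|X|_b^2=\|A^{1/2}(\sigma_h+\alpha p_hI)\|_Q^2+c_0\|p_h\|^2$, which vanishes identically in the $(\tilde u_h,\tilde\gamma_h,z_h)$ components. Hence $E_b'$ has trivial $V_h$-component, and your forcing $\langle\mathcal{F},Y\rangle=(f,v)+(q,w)$ does \emph{not} lie in $E_b'$ unless $f\equiv 0$. You therefore cannot invoke Theorem~\ref{thmsho61b} as written. The paper fixes this by first solving, for each $t$, the mixed elasticity problem with source $f$ to obtain $(\sigma_h^f,\dot u_h^f,\dot\gamma_h^f)$, subtracting this from the unknowns, and absorbing the resulting terms into the $\sigma$- and $p$-rows of $\mathcal{F}$; the modified forcing then lives in $E_b'$ with the claimed regularity. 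This reduction is the missing step in your argument.

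A smaller omission: the hypothesis $x_0\in D$ of Theorem~\ref{thmsho61b} means $\mathcal{M}x_0\in E_b'$, which is again a nontrivial constraint because the $(v,\xi,\zeta)$-components of $\mathcal{M}x_0$ must vanish. The paper arranges this by taking the discrete initial data as the elliptic projection of the continuous compatible data (cf.\ \eqref{ell-proj}); you should indicate how compatibility of $(\sigma_{h,0},u_{h,0},\gamma_{h,0},z_{h,0},p_{h,0})$ is used to place $x_0$ (and $\dot x_0$) in the domain $D$. Your closing discussion of the $W^{1,\infty}$ regularity of $p_h$ when $c_0=0$ is more explicit than the paper's treatment, which simply asserts the conclusion; your proposed route via differentiating the Darcy pair and invoking \eqref{inf-sup-darcy} is reasonable, though in this finite-dimensional setting the issue is mild.
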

\begin{proof}
In order to fit \eqref{eq:msfmfe1}--\eqref{eq:msfmfe5} in the form of
Theorem~\ref{thmsho61b}, we consider a slightly modified formulation,
with \eqref{eq:msfmfe1} differentiated in time and the new variables
$\dot u_h$ and $\dot \gamma_h$ representing $\dt u_h$ and $\dt
\gamma_h$, respectively:
\begin{equation}\label{eq:msfmfe1-diff}
\inp[\dt A(\sigma_h + \a p_h I)]{\tau}_Q + \inp[\dot u_h]{\dvr{\tau}}
          + \inp[\dot \gamma_h]{\tau}_Q = 0, \quad \forall \tau \in \X_h.
  \end{equation}
Introducing the operators
\begin{gather*}
  (\As \sigma_h,\tau) = \inp[A\sigma_h]{\tau}_Q, \ 
  (\Asp \sigma_h, w) = \alpha\inp[A\sigma_h]{wI}_Q, \  
  (\Asu \sigma_h, v) = \inp[\dvr{\sigma_h}]{v}, \ 
  (\Asg \sigma_h, \xi) = \inp[\sigma_h]{\xi}_Q, \\
  (\Az \z_h,\zeta) = \inp[\K z_h]{\zeta}_Q, \quad
  (\Azp \z_h, w) = - \inp[\dvr{z_h}]{w}, \quad (\Ap p_h,w) = \inp[c_0 p_h]{w}
  + \a\inp[A\a p_h I]{wI}_Q, 
  \end{gather*}
we have a system in the form of \eqref{parabolic-system}, where
\begin{align*}
\dot x = \begin{pmatrix}
		\s_h \\ \dot u_h \\ \dot \g_h \\ z_h \\ p_h
\end{pmatrix}, \ 
\mathcal{N} = \begin{pmatrix}
		\As   & 0      & 0 & 0     & \Asp^T \\
		0     & 0      & 0      & 0     & 0 \\
		0     & 0      & 0      & 0     & 0 \\
		0     & 0      & 0      & 0     & 0 \\
		\Asp  & 0      & 0      & 0     & \Ap 
\end{pmatrix}, \ 
\mathcal{M} = \begin{pmatrix}
		0     & \Asu^T & \Asg^T & 0     & 0 \\
		-\Asu & 0      & 0      & 0     & 0 \\
		-\Asg & 0      & 0      & 0     & 0 \\
		0     & 0      & 0      & \Az   & \Azp^T \\
		0     & 0      & 0      & -\Azp & 0
\end{pmatrix}, \ 
\mathcal{F} =   \begin{pmatrix}
0 \\ -f \\ 0 \\ 0 \\ q
\end{pmatrix}.   
\end{align*}
The dual space $E_b'$ is $L^2(\Omega,\M)\times 0 \times 0 \times 0 \times L^2(\Omega)$, 
and the condition $\mathcal{F} \in W^{1 , 1}(0 , T ; E_{b}')$
in Theorem~\ref{thmsho61b} allows for non-zero source terms only in
the equations with time derivatives. In our case this means $f =
0$. We can reduce our problem to a system with $f = 0$ by solving for
each $t \in (0,T]$ an elasticity problem with a source term $f$,
cf. \cite{Showalter-SIAMMA} for a similar approach:
$$
  \begin{pmatrix}
    \As & \Asu^T & \Asg^T \\
    -\Asu & 0      & 0 \\
    -\Asg & 0      & 0
  \end{pmatrix}
  \begin{pmatrix} \s_h^f \\ \dot u_h^f \\ \dot \g_h^f \end{pmatrix} =
  \begin{pmatrix} 0 \\ -f \\ 0 \end{pmatrix},
  $$
  and subtracting this solution from the solution to
  \eqref{eq:msfmfe1}--\eqref{eq:msfmfe5}, resulting in a problem with
  a modified right hand side $\mathcal{F} = (\As(\s_h^f - \dt\s_h^f),
  0, 0, 0, q - \Asp\dt\s_h^f)^T$.

  The range condition $Rg( \mathcal{N} + \mathcal{M}) \, = \, E_{b}'$ can be verified
  by showing that the square finite dimensional homogeneous system: find
  $(\hat\sigma_h,\hat u_h,\hat\gamma_h,\hat z_h,\hat p_h) \in \X_h \times V_h \times \Q_h \times Z_h \times W_h$ such that
\begin{align}
	  &\inp[A(\hat\sigma_h + \a \hat p_h I)]{\tau}_Q + \inp[\hat u_h]{\dvr{\tau}}
          + \inp[\hat\gamma_h]{\tau}_Q = 0, &&
          \forall \tau \in \X_h, \label{eq:msfmfe1-0}\\
	  &\inp[\dvr{\hat\sigma_h}]{v} = 0, && \forall v \in V_h,
          \label{eq:msfmfe2-0}\\
	&\inp[\hat\sigma_h]{\xi}_Q  = 0, && \forall \xi \in \Q_h, \label{eq:msfmfe3-0}\\
	  &\inp[\K \hat z_h]{\zeta}_Q - \inp[\hat p_h]{\dvr{\zeta}}  = 0,
          && \forall \zeta \in Z_h,
          \label{eq:msfmfe4-0}\\
	&\inp[c_0\hat p_h]{w} + \a\inp[A(\hat\sigma_h + \a \hat p_h I)]{wI}_Q
+ \inp[\dvr{\hat z_h}]{w}   = 0, && \forall w \in W_h, \label{eq:msfmfe5-0}
\end{align}
has only the zero solution, see also
\cite[Section~3.4]{Lee-Biot-five-field}.  Taking $(\tau, v, \xi, \zeta, w) =
(\hat\sigma_h,\hat u_h,\hat\gamma_h,\hat z_h,\hat p_h)$ and combining
the equations implies $\|A^{1/2}(\hat\sigma_h + \a \hat p_h I)\|_Q^2 +
\|c_0^{1/2}\hat p_h\|^2 + \|K^{-1/2}\hat z_h\|_Q^2 = 0$, which gives
$\hat\sigma_h + \a \hat p_h I = 0$ and $\hat z_h = 0$, using the
positive definiteness of $A$ and $K$ and the coercivity
\eqref{quad-coercive-eqn}.  Then the Darcy inf-sup condition
\eqref{inf-sup-darcy} implies that $\hat p_h = 0$, and therefore $\hat
\sigma_h = 0$. The elasticity inf-sup condition \eqref{inf-sup-elast}
now implies that $\hat u_h = 0$ and $\hat \gamma_h = 0$.

The above argument can also be used to conclude that $\mathcal{N}$ and
$\mathcal{M}$ are non-negative, and therefore, due to their linearity, monotone.

Finally, we need compatible initial data $\dot x_{0} \in D$, i.e.,
$\mathcal{M}\dot x_0 \in E_b'$. Let us consider first initial data
$x_0 = (\sigma_{h,0},u_{h,0},\gamma_{h,0},z_{h,0},p_{h,0})$ for
the non-differentiated problem \eqref{eq:msfmfe1}--\eqref{eq:msfmfe5}. We take
$x_0$ to be the
elliptic projection of the initial data $\tilde x_0 =
(\sigma_0,u_0,\gamma_0,z_0,p_0)$ for the weak formulation
\eqref{eq:cts1}--\eqref{eq:cts5}, which is constructed from $p_0$ by
the procedure described at the end of Section~\ref{sec:model}.
With the reduction to a problem with $f = 0$,
the construction satisfies $(\mathcal{N} + \mathcal{M}) \tilde x_0 \in E_b'$. Since we have
\begin{equation}\label{ell-proj}
(\mathcal{N} + \mathcal{M}) x_0 = (\mathcal{N} + \mathcal{M}) \tilde x_0,
\end{equation}
this implies that $\mathcal{M} x_0 = (\mathcal{N} + \mathcal{M})
\tilde x_0 - \mathcal{N} x_0 \in E_b'$. For the initial data of the differentiated
problem \eqref{eq:msfmfe1-diff},\eqref{eq:msfmfe2}--\eqref{eq:msfmfe5}, we simply take
$\dot x_0 = (\s_{h,0},0,0,z_{h,0},p_{h,0})$, which also satisfies $\mathcal{M} \dot x_0 \in E_b'$.
We note that $u_{h,0}$ and $\gamma_{h,0}$ are not needed for the differentiated
problem, but will be used to recover the solution of the original problem.

Now, all conditions of Theorem~\ref{thmsho61b} are satisfied
and we conclude the existence of a solution to \eqref{eq:msfmfe1-diff},
\eqref{eq:msfmfe2}--\eqref{eq:msfmfe5} with
$\sigma_h \in W^{1,\infty}(0,T;L^2(\Omega,\M)) \cap L^\infty(0,T;\X_h)$,
$p_h \in W^{1,\infty}(0,T;W_h)$, $\sigma_h(0) = \sigma_{h,0}$, and $p_h(0) = p_{h,0}$.
From the equations 
we can further conclude that $\dot u_h \in L^\infty(0,T;V_h)$,
$\dot \gamma_h \in L^\infty(0,T;\Q_h)$ and
$z_h \in L^\infty(0,T;Z_h)$. By taking $t \to 0$ in \eqref{eq:msfmfe4} and using
that $z_{h,0}$ and $p_{h,0}$ satisfy \eqref{eq:msfmfe4} at $t = 0$,
we also have that $z_h(0) = z_{h,0}$. 

Next, we recover the solution of the original problem. Let us define
\begin{equation}\label{defn-u-gamma}
  u_h(t) = u_{h,0} + \int_0^t \dot u_h ds, \quad \g_h(t) = \g_{h,0} +  \int_0^t \dot \g_h ds,
  \quad \forall \, t \in [0,T].
\end{equation}
By construction, $u_h(0) = u_{h,0}$ and $\gamma_h(0) = \gamma_{h,0}$. Integrating
\eqref{eq:msfmfe1-diff} in time from 0 to any $t \in (0,T]$ and using that
  $\sigma_{h,0}$, $u_{h,0}$, and $\gamma_{h,0}$ satisfy \eqref{eq:msfmfe1} at $t = 0$,
  we conclude that \eqref{eq:msfmfe1} holds for all $t$. This completes the existence
  proof. Uniqueness follows from the stability bound given in Theorem~\ref{thm:stab} in
  the next section.
  \end{proof}

\begin{remark}
  The above argument and the stability bound below do not require $c_0 > 0$, implying
  well posedness even for $c_0 = 0$.
  \end{remark}

\section{Stability analysis of the semidiscrete MSMFE--MFMFE method}
In this section we derive a stability bound for the MSMFE--MFMFE method
\eqref{eq:msfmfe1}--\eqref{eq:msfmfe5}. We remark that stability
analysis for the $\BDM_1$ MFE method \eqref{eq:mfe1}--\eqref{eq:mfe5}
was not performed in \cite{Lee-Biot-five-field}, where only error analysis
was carried out. The stability analysis 
is more involved than the error
analysis, since controlling the boundary condition term
$\gnp[g_p]{\zeta\cdot n}_{\Gd^{pres}}$ requires bounding $\|\dvr
z_h\|$. Even though we consider $g_p = 0$, we derive a bound on
$\|\dvr z_h\|$, thus obtaining full control on $\|z_h\|_{\dvr}$.

\begin{theorem}\label{thm:stab}
There exists a positive constant $C$ independent of $h$ and $c_0$, such that 
the solution of \eqref{eq:msfmfe1}--\eqref{eq:msfmfe5} satisfies
\begin{align}
  &\|\s_h\|_{L^{\infty}(0,T;\Hdiv)} +\|u_h\|_{L^{\infty}(0,T;L^2(\O))} + \|\g_h\|_{L^{\infty}(0,T;L^2(\O))}
  +\|z_h\|_{L^{\infty}(0,T;L^2(\O))} + \|p_h\|_{L^{\infty}(0,T;L^2(\O))} \nonumber \\ 
  &\:+\|\s_h\|_{L^{2}(0,T;\Hdiv)} +\|u_h\|_{L^{2}(0,T;L^2(\O))} + \|\g_h\|_{L^{2}(0,T;L^2(\O))}
  +\|z_h\|_{L^{2}(0,T;\Hdiv)} + \|p_h\|_{L^{2}(0,T;L^2(\O))} \nonumber \\
	& \qquad\quad \leq C\left(\|f\|_{H^{1}(0,T;L^2(\O))}
  +\|q\|_{H^{1}(0,T;L^2(\O))} + \|p_0\|_{H^1(\Omega)} + \|K\nabla p_0\|_{\Hdiv} \right). \label{stability}
	\end{align}
\end{theorem}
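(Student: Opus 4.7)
The plan is a two-stage energy argument: a primary estimate obtained by time-differentiating \eqref{eq:msfmfe1} and combining it with the remaining equations, followed by an analogous estimate applied to the time-differentiated system in order to extract $\|\dvr z_h\|$. The non-coercive norms of $u_h$, $\gamma_h$, and $p_h$ are recovered separately through the inf-sup conditions \eqref{inf-sup-darcy} and \eqref{inf-sup-elast}.

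For the primary estimate I would differentiate \eqref{eq:msfmfe1} in time and test with $\tau = \sigma_h$, test \eqref{eq:msfmfe2} with $v = \dt u_h$, \eqref{eq:msfmfe3} with $\xi = \dt\gamma_h$, \eqref{eq:msfmfe4} with $\zeta = z_h$, and \eqref{eq:msfmfe5} with $w = p_h$. Upon summation the mixed coupling terms $(\dt u_h,\dvr\sigma_h)$, $(\dt\gamma_h,\sigma_h)_Q$, and $(p_h,\dvr z_h)$ cancel pairwise. Splitting $\sigma_h = (\sigma_h+\alpha p_h I) - \alpha p_h I$ turns the stress-stress quadrature term into a total time derivative plus a cross term that is annihilated by the $\alpha(\dt A(\sigma_h+\alpha p_h I), p_h I)_Q$ contribution from \eqref{eq:msfmfe5}, producing the identity
\begin{equation*}
\tfrac{1}{2}\tfrac{d}{dt}\bigl(\|A^{1/2}(\sigma_h+\alpha p_h I)\|_Q^2 + c_0\|p_h\|^2\bigr) + \|K^{-1/2}z_h\|_Q^2 = (f,\dt u_h) + (q,p_h).
\end{equation*}
Integrating in time, processing the $\dt u_h$ term by parts via $\int_0^t (f,\dt u_h)\,ds = (f,u_h)\big|_0^t - \int_0^t (\dt f, u_h)\,ds$, and applying Young's inequality and Gr\"onwall yields $L^\infty(0,T)$ control of $\|A^{1/2}(\sigma_h+\alpha p_h I)\|_Q$ and $\sqrt{c_0}\|p_h\|$ and $L^2(0,T)$ control of $\|K^{-1/2}z_h\|_Q$, provided one can absorb the boundary contribution $(f(t),u_h(t))$.

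The remaining norms come from the inf-sup conditions. Combining \eqref{eq:msfmfe4} with \eqref{inf-sup-darcy} gives $\|p_h\| \le \beta_1^{-1}\sup_\zeta (K z_h,\zeta)_Q/\|\zeta\|_{\dvr} \le C\|z_h\|$ with $C$ independent of $c_0$, which is precisely what makes the estimate robust as $c_0 \to 0$. Combining \eqref{eq:msfmfe1} with \eqref{inf-sup-elast} gives $\|u_h\|+\|\gamma_h\| \le C(\|\sigma_h\|+\|p_h\|)$, and \eqref{eq:msfmfe2} gives $\|\dvr \sigma_h\| \le \|f\|$ directly. Reinserting these into the $(f,u_h)$ boundary term closes the Gr\"onwall loop.

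The main obstacle is controlling $\|\dvr z_h\|$, since it cannot be read off \eqref{eq:msfmfe4}. Rewriting \eqref{eq:msfmfe5} as $(\dvr z_h, w) = (q,w) - (c_0\dt p_h, w) - \alpha(\dt A(\sigma_h+\alpha p_h I), wI)_Q$ shows that bounds on $\dt p_h$ and $\dt(\sigma_h + \alpha p_h I)$ are needed. I would obtain these by differentiating the entire system once in time; by linearity of the spatial operators the derivatives $(\dt\sigma_h,\dt u_h,\dt\gamma_h,\dt z_h,\dt p_h)$ satisfy the same MSMFE--MFMFE system with forcings $(\dt f,\dt q)$, so the primary energy estimate applies verbatim. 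The initial values for the differentiated unknowns are recovered by taking $t \to 0^+$ in the original equations, and are controlled by $\|p_0\|_{H^1(\Omega)}$, $\|K\nabla p_0\|_{\dvr}$, $\|f(0)\|$, and $\|q(0)\|$, the last two absorbed into $\|f\|_{H^1(0,T;L^2)}$ and $\|q\|_{H^1(0,T;L^2)}$. Testing \eqref{eq:msfmfe5} with $w = \dvr z_h$ then finishes the estimate. Throughout, every constant must be kept independent of $c_0$; this works because every $c_0$ factor sits on the left with a favorable sign, and the $p_h$-type norms are recovered through the Darcy inf-sup rather than from the degenerate $c_0$ term.
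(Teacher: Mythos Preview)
Your primary estimate and the inf-sup recoveries match the paper exactly. One minor difference: you invoke Gr\"onwall, while the paper absorbs $\int_0^t\|u_h\|^2$ and $\int_0^t\|p_h\|^2$ back into left-hand-side terms via the inf-sup bounds \eqref{st-eq:7}--\eqref{st-eq:8} together with an auxiliary estimate $\|\sigma_h\|^2\le C(\|p_h\|^2+\epsilon\|u_h\|^2+\|f\|^2)$, thereby avoiding Gr\"onwall and obtaining a $T$-independent constant. This does not affect the theorem as stated.

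The secondary estimate, however, has a regularity gap. Applying the primary estimate \emph{verbatim} to the time-differentiated system means differentiating \eqref{eq:msfmfe1} twice and testing the differentiated \eqref{eq:msfmfe2} with $v=\partial_{tt}u_h$; the resulting forcing term $(\partial_t f,\partial_{tt}u_h)$, processed by the same integration by parts in time, requires $\partial_{tt}f\in L^2$, i.e., $f\in H^2(0,T;L^2(\Omega))$, one derivative more than the right-hand side of \eqref{stability} allows. The paper avoids this by differentiating only \eqref{eq:msfmfe1}--\eqref{eq:msfmfe4} and leaving \eqref{eq:msfmfe5} undifferentiated, testing with $(\tau,v,\xi,\zeta,w)=(\partial_t\sigma_h,\partial_t u_h,\partial_t\gamma_h,z_h,\partial_t p_h)$. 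This swaps which terms carry the time derivative and yields
\[
\|\partial_t A^{1/2}(\sigma_h+\alpha p_h I)\|_Q^2 + \|c_0^{1/2}\partial_t p_h\|^2 + \tfrac12\tfrac{d}{dt}\|K^{-1/2}z_h\|_Q^2 = (\partial_t f,\partial_t u_h) + (q,\partial_t p_h),
\]
whose right-hand side needs only $f,q\in H^1(0,T;L^2(\Omega))$: the first term is absorbed directly via the inf-sup bound $\|\partial_t u_h\|\le C\|\partial_t A^{1/2}(\sigma_h+\alpha p_h I)\|$, and the second by one integration by parts in $t$. As a by-product one obtains $\|z_h\|_{L^\infty(0,T;L^2(\Omega))}$ and hence, through \eqref{inf-sup-darcy}, the $c_0$-independent control of $\|p_h\|_{L^\infty(0,T;L^2(\Omega))}$.
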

\begin{proof}
We differentiate \eqref{eq:msfmfe1} in time, choose $(\t,v,\xi,\zeta,w)
= (\s_h, \dt u_h, \dt \g_h, z_h, p_h)$ in equations
\eqref{eq:msfmfe1}--\eqref{eq:msfmfe5}, and combine them to obtain
\begin{align*}
  \inp[\dt(A\s_h + \a p_h I)]{\s_h + \a p_h I}_Q + \inp[c_0\dt{p_h}]{p_h}
+ \inp[\K z_h]{z_h}_Q = \inp[f]{\dt u_h} + \inp[q]{p_h},
\end{align*}
implying
\begin{align}\label{st-eq:5}
  \frac{1}{2}\dt\left[ \|A^{1/2}(\s_h+\a p_hI)\|^2_Q + \|c_0^{1/2}p_h\|^2 \right]
  +\|K^{-1/2}z_h\|^2_Q  = \inp[f]{\dt u_h} + \inp[q]{p_h}.
\end{align}
Next, integrating \eqref{st-eq:5} in time from $0$ to an arbitrary
$t\in (0,T]$ results in
\begin{align*}
  \frac{1}{2}& \left[\|A^{1/2}(\s_h+\a p_hI)(t)\|^2_Q + \|c_0^{1/2}p_h(t)\|^2\right]
  +\int_0^t \|K^{-1/2}z_h\|^2_Q\, ds\\
	&= \int_0^t\left( \inp[q]{p_h}-\inp[\dt f]{ u_h} \right) ds 
+\frac{1}{2}\left[ \|A^{1/2}(\s_h+\a p_hI)(0)\|^2_Q + \|c_0^{1/2}p_h(0)\|^2\right] 
+ (f,u_h)(t) - (f,u_h)(0).
\end{align*}
Applying the Cauchy-Schwartz and Young's inequalities, we obtain
\begin{align}
  & \|A^{1/2}(\s_h+\a p_hI)(t)\|^2_Q + \|c_0^{1/2}p_h(t)\|^2 +
  2 \int_0^t \|K^{-1/2}z_h\|^2_Q\, ds \nonumber\\
  &\quad \leq \epsilon_1 \left(\|u_h(t)\|^2
  + \int_0^t(\|p_h\|^2+\|u_h\|^2)\,ds \right)
  +\frac{1}{\epsilon_1}\left(\|f(t)\|^2
  +\int_0^t \left(\|q\|^2+\|\dt f\|^2 \right) ds \right)
  \nonumber \\
  &\quad \quad + \|A^{1/2}(\s_h+\a p_hI)(0)\|^2_Q + \|c_0^{1/2}p_h(0)\|^2
  + \|u_h(0)\|^2+\|f(0)\|^2. \label{st-eq:6}
\end{align}
Using the inf--sup condition \eqref{inf-sup-elast} and \eqref{eq:msfmfe1}, we
bound $\|u_h\|$ and  $\|\g_h\|$ as follows,
\begin{align}
	\|u_h\| + \|\g_h\| &\leq C\sup\limits_{0\neq\t \in \X_h} \frac{\inp[u_h]{\dvr{\t}} + \inp[\g_h]{\t}_Q}{\|\t\|_{\dvr}} \nonumber\\
	&= C\sup\limits_{0\neq\t \in \X_h} \frac{-\inp[A^{1/2}(\s_h+\api)]{A^{1/2}\t}_Q  }{\|\t\|_{\dvr}} \leq C\|A^{1/2}(\s_h+\api)\| , \label{st-eq:7}
\end{align}
where in the last step we used the equivalence of norms as stated in
Corollary \ref{quad-inner-prod}. We also note that
\begin{equation}\label{l2-inf-sup-elast}
  \int_0^t \left(\|u_h\|^2 + \|\g_h\|^2\right) ds \le C \int_0^t
  \left(\|\s_h\|^2 + \|p_h\|^2\right) ds.
\end{equation}
Similarly, using the inf-sup
condition \eqref{inf-sup-darcy} and \eqref{eq:msfmfe4}, we have
\begin{align}
  \|p_h\| &\leq C\sup\limits_{0 \neq \zeta \in Z_h}
  \frac{\inp[p_h]{\dvr{\zeta}} }{\|\zeta\|_{\dvr}}
  = C\sup\limits_{0 \neq \zeta \in Z_h} \frac{\inp[K^{-1}z_h]{\zeta}_Q }
  {\|\zeta\|_{\dvr}} \leq C\|K^{-1/2}z_h\|. \label{st-eq:8}
\end{align}
To obtain a bound on $\int_0^t \|\s_h\|^2 ds$, which appears on the
right hand side of \eqref{l2-inf-sup-elast}, we take $\t =\s_h,\, v=
u_h,\, \xi =\gamma_h$ in \eqref{eq:msfmfe1}--\eqref{eq:msfmfe3}, and
use Cauchy-Schwartz and Young's inequalities, to obtain
\begin{align}
  \|\s_h\|^2\leq C \Big( \|p_h\|^2 + \epsilon_2\|u_h\|^2
  + \frac{1}{\epsilon_2}\|f\|^2 \Big). \label{st-eq:10}
\end{align}
Also, testing \eqref{eq:msfmfe2} with $v = J_E \, \dvr \s_h$ on each $E \in \Tc_h$,
we obtain a bound on the stress divergence:
\begin{align}
	\|\dvr \s_h\| \leq \|f\|. \label{st-eq:9}
\end{align}
Combining inequalities \eqref{st-eq:6}--\eqref{st-eq:9} and
choosing $\epsilon_2$ small enough, then $\epsilon_1$ small enough, we obtain
\begin{align}
  & \|A^{1/2}(\s_h+\a p_hI)(t)\|^2 +\|u_h(t)\|^2 + \|\g_h(t)\|^2
  + \|c_0^{1/2}p_h(t)\|^2 + \|\dvr \s_h(t)\|^2  \nonumber \\
  & \qquad + \int_0^t \left(\|\s_h\|^2 + \|u_h\|^2 + \|\g_h\|^2 
  + \|K^{-1/2}z_h\|^2 + \|p_h\|^2 + \|\dvr \s_h\|^2 \right) ds  \nonumber \\
  & \quad \leq C\Big(
  \|f(t)\|^2+\int_0^t \left( \|q\|^2 + \|f\|^2 + \|\dt f\|^2 \right) ds \nonumber \\
& \qquad\qquad
  + \|\s_h(0)\|^2 + \|p_h(0)\|^2 + \|u_h(0)\|^2
          +\|f(0)\|^2 \Big) .\label{st-eq:8-1}
\end{align}

\noindent
{\bf Estimate for $\dvr z_h$.}
We note that \eqref{st-eq:8-1} is a self-contained stability estimate. We now proceed
with obtaining a bound on $\|\dvr z_h\|$. In the process, we also
obtain a bound on $\|K^{-1/2}z_h(t)\|$ for all $t$, and as a result, a bound on 
$\|p_h(t)\|$ for all $t$ that is independent of $c_0$.
We choose on each $E \in \Tc_h$, $w_h = J_E \, \dvr z_h$ in \eqref{eq:msfmfe5} and obtain
\begin{align}
  \|\dvr z_h\| \leq C\left(\|c_0^{1/2}\dt p_h\| +  \|\dt A^{1/2} (\s_h+\api )\|
  +\|q\|\right) \label{st-eq:12}.
\end{align}
To control the first two terms on the right hand side of
\eqref{st-eq:12}, we differentiate equations
\eqref{eq:msfmfe1}--\eqref{eq:msfmfe4} in time and combine them with 
\eqref{eq:msfmfe5} as it was done in \eqref{st-eq:5}--\eqref{st-eq:6}, with
the choice $(\t,v,\xi,\z,w) = (\dt\s_h, \dt u_h, \dt \g_h, z_h, \dt p_h)$, resulting in
\begin{align}
  & 2\int_0^t\left(\|\dt A^{1/2}(\s_h+\api)\|_Q^2 + \|c_0^{1/2}\dt p_h\|^2\right)\, ds
  + \|K^{-1/2}z_h(t)\|^2_Q \nonumber\\
  & \quad \le \epsilon\left(\|p_h(t)\|^2 + \int_0^t\|\dt u_h\|^2 ds \right)
  + \frac{1}{\epsilon}\left(\|q(t)\|^2 + \int_0^t\|\dt f\|^2 ds \right) \nonumber\\
  & \qquad + \int_0^t\left(\| p_h\|^2 + \|\dt q\|^2\right) ds
  + \|K^{-1/2}z_h(0)\|^2_Q  + \|p_h(0)\|^2 + \|q(0)\|^2 \label{st-eq:13}.
\end{align}
Using the inf--sup condition \eqref{inf-sup-elast} and \eqref{eq:msfmfe1},
differentiated in time, we have
\begin{align}
\|\dt u_h\| + \|\dt \g_h\| &\leq C\|\dt A^{1/2}(\s_h+\api)\|. \label{st-eq:14}
\end{align}
Combining \eqref{st-eq:13}, \eqref{st-eq:14}, and \eqref{st-eq:8}, we get
\begin{align}
  &\int_0^t\left(\|\dt A^{1/2}(\s_h+\api)\|^2+\|\dt u_h\|^2
  +\|\dt \g_h\|^2 + \|c_0^{1/2}\dt p_h\|^2\right) ds
  + \|K^{-1/2}z_h(t)\|^2 +\|p_h(t)\|^2 \nonumber\\
  &\quad \leq C\left(\int_0^t\left(\|p_h\|^2 +
  \|\dt q\|^2 + \|\dt f\|^2 \right)\,ds + \|q(t)\|^2  
	+\|z_h(0)\|^2 + \|p_h(0)\|^2 +\|q(0)\|^2\right).  \label{st-eq:15}
\end{align}
Integrating \eqref{st-eq:12} in time and using \eqref{st-eq:15} and
\eqref{st-eq:8-1}, results in
\begin{align}
  & \|K^{-1/2}z_h(t)\|^2 +\|p_h(t)\|^2 +  \int_0^t\|\dvr z_h\|^2 ds   \nonumber \\
  &\qquad \leq C\Big(\|q(t)\|^2
  + \|f(t)\|^2+\int_0^t \left( \|q\|^2 + \|f\|^2 + \|\dt q\|^2 + \|\dt f\|^2 \right) ds \nonumber \\
& \qquad\qquad
  + \|\s_h(0)\|^2 + \|p_h(0)\|^2 + \|u_h(0)\|^2 + \|z_h(0)\|^2 + \|q(0)\|^2
         + \|f(0)\|^2 \Big). \label{st-eq:16}
\end{align}
We note that the control on $\|A^{1/2}(\s_h+\a p_hI)(t)\|$ and $\|p_h(t)\|$ also implies
a bound on $\|\s_h(t)\|$:
\begin{align}
\|\s_h\| \leq  C (\|A^{1/2}(\s_h + \api)\| + \|p_h\|). \label{st-eq:18}
\end{align}
    Finally, we recall the construction of the initial
      data $(\sigma_0,u_0,\gamma_0,z_0,p_0)$ for the weak formulation
      \eqref{eq:cts1}--\eqref{eq:cts5}, see Section~\ref{sec:model},
      and that the discrete initial data
      $(\sigma_{h,0},u_{h,0},\gamma_{h,0},z_{h,0},p_{h,0})$ is taken
      as its elliptic projection, see \eqref{ell-proj}. Then following
      the steady-state version of the arguments presented in
      \eqref{st-eq:5}--\eqref{st-eq:18}, we obtain
\begin{align}
  & \|\s_h(0)\| + \|u_h(0)\| + \|\gamma_h(0)\| + \|p_h(0)\| + \|z_h(0)\| 
  \le C(\|\s_0\| + \|u_0\| + \|\gamma_0\| + \|p_0\| + \|z_0\|) \nonumber \\
  & \quad\qquad \le C (\|p_0\|_{H^1(\Omega)} + \|K\nabla p_0\|_{\Hdiv}). \label{init-data-bound}
\end{align}
The proof is completed by combining \eqref{st-eq:8-1},
\eqref{st-eq:9}, \eqref{st-eq:16}, \eqref{st-eq:18}, and
\eqref{init-data-bound}.
\end{proof}

\begin{remark}
  The constant in \eqref{stability} does not depend on $c_0$, so we
  have stability even for $c_0 = 0$. Furthermore, since we did not use
  Gronwall's inequality in the proof, the constant also does not
  involve exponential growth in time, resulting in a long-time stability.
  \end{remark}

\section{Error analysis}

In this section we establish optimal order error estimates for all variables in their natural
norms. 

\subsection{Preliminaries}
We begin with several auxiliary results that will be used to bound the approximation and
quadrature errors. Due to the reduced approximation properties of the MFE spaces on
general quadrilaterals \cite{arnold2005quadrilateral}, we restrict the
quadrilateral elements to be $O(h^2)$-perturbations of parallelograms:
\begin{align}\label{h2-parall}
\|\r_{34}-\r_{21}\| \leq Ch^2.
\end{align}
In this case it is easy to verify (see \cite{wheeler2006multipoint} for details) that
\begin{align}
	|DF_E|_{1,\infty,\Eh} \leq Ch^2 \quad \text{and} \quad \left|\frac{1}{J_E}DF_E\right|_{j,\infty,\Eh}\leq Ch^{j-1},\, j=1,2. \label{scaling-of-mapping-2}
\end{align}
Let $Q^0:L^2(\O)\rightarrow W_h$ be a projection operator satisfying for any $\phi \in L^2(\O)$,
\begin{equation*}\label{defn-Q0}
(\hat Q^0 \hat\phi - \hat\phi,\hat w)_{\hat E} = 0, \quad \forall \, \hat w \in \hat W (\hat E),
\quad Q^0 \phi = \hat Q^0 \hat\phi \circ F_E^{-1} \ \ \forall E \in \mathcal{T}_h.
\end{equation*}
We will also use $Q^0:L^2(\O,\R^d)\rightarrow V_h$, which is the above
operator applied component-wise. It follows from \eqref{piola-prop} that
\begin{align}
\begin{aligned}\label{const-proj}  
  \forall \, \phi \in L^2(\O,\R^d), \quad
  (Q^0 \phi - \phi, \dvr \tau) & = 0, \quad \forall \, \tau \in \X_h, \\
  \forall \, \phi \in L^2(\O), \quad 
  (Q^0 \phi - \phi, \dvr \zeta) & = 0, \quad \forall \, \zeta \in Z_h.
  \end{aligned}
\end{align}
Let $Q^1:L^2(\O,\N)\rightarrow \Q_h$ be the 
$L^2$-projection operator satisfying for any $\phi \in L^2(\O,\N)$,
\begin{align}
(Q^1 \phi - \phi, \xi) = 0, \quad \forall \, \xi\in \Q_h. \label{lin-proj}
\end{align}
Let $\Pi: \X \cap H^1(\Omega,\M) \to \X_h$ be the canonical mixed projection operator
acting on tensor valued functions. We will
also use the same notation for the projection operator acting on vector
valued functions, $\Pi: Z \cap H^1(\Omega,\R^d) \to Z_h$. It is shown in
\cite{brezzi1985two,BBF-book} and \cite{wang1994mixed} that $\Pi$ satisfies
\begin{align}
\begin{aligned}
\label{div-prop}
\forall \psi \in H^1(\Omega,\M), \quad (\dvr (\Pi\psi -\psi), v) &= 0, \qquad \forall v \in V_h, \\
\forall \psi \in H^1(\Omega,\R^d), \quad (\dvr (\Pi \psi - \psi), w) &= 0, \qquad \forall w \in W_h.
\end{aligned} 
\end{align}
We will also make use of the mixed projection operator onto the lowest
order Raviart-Thomas space $\RT_0$
\cite{RT,Nedelec80,BBF-book}.  This additional construction
is needed only for the error analysis on quadrilaterals, although for
uniformity in the forthcoming proofs we will treat the simplicial case
in the same fashion. We denote the $\RT_0$-based spaces by $\X^0_h$
and $Z^0_h$ for tensors and vectors, respectively, where the former is
obtained from $d$ copies of the latter.  The degrees of freedom of
$\X_h^0$ or $Z^0_h$ are constant values of the normal stress or
velocity on all edges (faces). The $\RT_0$ mixed projection operator,
denoted by $\Pi^0$, has properties similar to the $\BDM_1$ projection
operator $\Pi$. It also satisfies
	\begin{align}
	\begin{aligned}\label{prop-5}
	  &\dvr\Pi^0\t = \dvr\t \quad\mbox{and}\quad
          \|\Pi^0\t\| \le C\|\t\| \quad \forall \t\in\X_h, \\
	  &\dvr\Pi^0 \zeta = \dvr \zeta \quad\mbox{and}\quad
          \|\Pi^0 \zeta\| \le C\|\zeta\| \quad \forall \zeta \in Z_h. 
	\end{aligned}
	\end{align}
	
The following lemma summarizes well-known continuity and approximation properties of the projection operators, where $\H \in \{\M,\R^d\}$.
\begin{lemma}
There exists a constant $C>0$ such that
\begin{align}
	&\| \phi - Q^0 \phi \| \le C\|\phi\|_r h^r, && \forall \phi\in H^r(\O),&& 0\le r\le 1, \label{approx-1}\\
	&\| \phi - Q^1\phi \| \le C\|\phi\|_r h^r, && \forall \phi \in H^r(\O,\N),&& 0\le r\le 1, \label{approx-2} \\
	&\| \psi - \Pi\psi \| \le C \|\psi\|_r h^r, && \forall \psi\in H^r(\O,\H),\, && 1\le r\le 2, \label{approx-3}\\
	&\| \psi - \Pi^0\psi \| \le C \|\psi\|_1 h, && \forall \psi\in H^1(\O,\H), \label{approx-4} \\
	&\| \dvr(\psi - \Pi\psi) \|+\| \dvr(\psi - \Pi^0\psi) \| \le C \|\dvr\psi\|_r h^r, && \forall \psi\in H^{r+1}(\O,\H),\, && 0\le r\le 1. \label{approx-5}
\end{align}
In addition, for all elements $E\in \Tc_h$, there exists a constant $C>0$, such that
\begin{align}
  \|Q^0 \phi\|_{E} &\leq C \|\phi\|_{E}, && \forall \phi\in L^2(E), \label{l2-continuity-q0} \\
 \|Q^1 \phi\|_{1,E} &\leq C \|\phi\|_{1,E}, && \forall \phi\in H^1(E,\N), \label{h1-continuity-q1} \\
  \|\Pi \psi\|_{1,E} &\leq C \|\psi\|_{1,E}, && \forall \psi\in H^1(E,\H). \label{h1-continuity-bdm} 
\end{align}
\end{lemma}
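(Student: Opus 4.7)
The overall plan is to prove each bound by the standard element-by-element scaling argument: pull back to the reference element $\hat E$ via $F_E$ (using the Piola transformation for $\Pi$ and $\Pi^0$ and the ordinary change of variables for $Q^0,Q^1$), invoke a Bramble-Hilbert-type argument on $\hat E$, and then push forward using the bounds \eqref{mapping-bounds} and, on quadrilaterals, \eqref{scaling-of-mapping-2}. All five approximation inequalities and the three continuity inequalities appear in the literature; the task is to indicate how they combine here.

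For \eqref{approx-1} and \eqref{approx-2}, the operators $Q^0$ and $Q^1$ are piecewise $L^2$-projections onto spaces that contain constants (resp.\ continuous piecewise linears containing constants). On each element the pulled-back operator $\hat Q^0$ preserves the polynomial spaces $\Pc_0(\hat E)$ or $\Qc_0(\hat E)$, and $\hat Q^1$ preserves $\Pc_1(\hat E,\N)$ or $\Qc_1(\hat E,\N)$; applying the Bramble-Hilbert lemma on $\hat E$ together with the scaling \eqref{mapping-bounds} yields the bounds for $0 \le r \le 1$. For \eqref{approx-3} and \eqref{approx-4}, I would use the fact that $\Pi$ (respectively $\Pi^0$) preserves $\BDM_1$ (respectively $\RT_0$) on $\hat E$ and is bounded on $H^1(\hat E,\H)$; by Bramble-Hilbert the reference-element error is controlled by the appropriate seminorm on $\hat E$. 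Since the stress and velocity are mapped by the Piola transformation, rescaling uses the bounds on $DF_E$ and $J_E$. On quadrilaterals, the assumption \eqref{h2-parall} together with \eqref{scaling-of-mapping-2} is precisely what is needed to recover full first-order convergence of $\Pi$ and $\Pi^0$ despite the reduced approximation of $\BDM_1$/$\RT_0$ on general quadrilaterals.

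For the divergence estimate \eqref{approx-5}, the key ingredient is the commuting property that follows from \eqref{div-prop} and \eqref{piola-prop}: namely, $\dvr(\Pi\psi) = Q^0(\dvr \psi)$ and $\dvr(\Pi^0\psi) = Q^0(\dvr \psi)$ (on simplices directly, and on quadrilaterals after accounting for the $1/J_E$ factor from \eqref{piola-prop}, for which the $O(h^2)$-parallelogram property again plays a role). Thus $\dvr(\psi - \Pi\psi) = \dvr\psi - Q^0(\dvr\psi)$ and \eqref{approx-5} reduces to \eqref{approx-1} applied to $\dvr\psi \in H^r$ for $0 \le r \le 1$.

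The continuity estimates \eqref{l2-continuity-q0}--\eqref{h1-continuity-bdm} are established by the same mapping: $\hat Q^0$, $\hat Q^1$, $\hat\Pi$ are bounded linear operators on finite-dimensional reference spaces with fixed norms, and the scaling from $\hat E$ to $E$ produces matching powers of $h$ on both sides (for $\Pi$ acting on tensors, the Piola transformation mixes $DF_E$ and $1/J_E$, and \eqref{scaling-of-mapping-2} ensures these cancel to leading order). I expect the main obstacle to lie in the quadrilateral case for $\Pi$ and $\Pi^0$: verifying that the $O(h^2)$-parallelogram assumption \eqref{h2-parall} indeed delivers first-order accuracy requires careful tracking of which components of the pulled-back error vanish on parallelograms and are only perturbed at $O(h^2)$ on quadrilaterals satisfying \eqref{h2-parall}. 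This argument follows the analyses in \cite{wheeler2006multipoint,msmfe1,msmfe2,wang1994mixed,BBF-book}, to which we refer for the detailed computations.
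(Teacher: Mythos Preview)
Your proposal is correct and in fact more detailed than the paper's own proof, which simply cites standard references: \cite{ciarlet2002finite} for \eqref{approx-1}--\eqref{approx-2}, \cite{BBF-book,roberts1991mixed,wang1994mixed,arnold2005quadrilateral} for \eqref{approx-3}--\eqref{approx-5}, and \cite{wheeler2006multipoint} for \eqref{h1-continuity-q1}--\eqref{h1-continuity-bdm}. The scaling/Bramble--Hilbert arguments you outline are exactly what underlies those references, so there is no divergence in approach.
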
 
\begin{proof}
The proof of bounds for the $L^2$-projections
\eqref{approx-1}--\eqref{approx-2} can be found in
\cite{ciarlet2002finite}; and bounds \eqref{approx-3}--\eqref{approx-5}
can be found in \cite{BBF-book,roberts1991mixed} for affine
elements and \cite{wang1994mixed,arnold2005quadrilateral} for
$h^2$-parallelograms. Finally, \eqref{l2-continuity-q0} is the stability of the
$L^2$-projection and the proof of
\eqref{h1-continuity-q1}--\eqref{h1-continuity-bdm} was presented in
\cite{wheeler2006multipoint}.
\end{proof}

The following result is needed in the error analysis.
\begin{lemma}\label{quad-error-const}
For any $\hat\tau \in \Xh(\Eh)$ and $\hat\zeta \in \Zh(\Eh)$,
\begin{align}
  \inp[\hat\tau - \Pih^0\hat\tau]{\th_0}_{\Qh,\Eh} = 0 \quad
  \mbox{for all constant tensors $\th_0$,} \label{q-err-const-tensor}\\
  \inp[\hat\zeta - \Pih^0\hat\zeta]{\hat\zeta_0}_{\Qh,\Eh} = 0 \quad
  \mbox{for all constant vectors $\hat\zeta_0$}. \label{q-err-const-vector}
\end{align}
\end{lemma}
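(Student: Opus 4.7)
The plan is to reduce both identities to a single vertex--sum statement and then verify that statement by combining the divergence--preserving and normal--flux--preserving properties of $\hat\Pi^0$ with the exactness of the vertex quadrature on an appropriate polynomial space. Since $\hat\tau_0$ (resp.\ $\hat\zeta_0$) is constant, the reference--element quadrature factors as
\[
(\hat\tau - \hat\Pi^0\hat\tau, \hat\tau_0)_{\hat{\mathcal{Q}},\hat E}
= \frac{|\hat E|}{s}\, \hat\tau_0 : \sum_{i=1}^{s}(\hat\tau - \hat\Pi^0\hat\tau)(\hat r_i),
\]
and the tensor identity \eqref{q-err-const-tensor} follows row--by--row from the vector identity \eqref{q-err-const-vector}. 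Thus it suffices to analyze $\hat\xi := \hat\zeta - \hat\Pi^0\hat\zeta$ for $\hat\zeta\in \hat Z(\hat E)$.

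Next, I would exploit the defining degrees of freedom of $\hat\Pi^0$: on each edge (face) $\hat e\subset\partial\hat E$, the normal trace $(\hat\Pi^0\hat\zeta)\cdot \hat n_{\hat e}$ is the constant equal to the average of $\hat\zeta\cdot \hat n_{\hat e}$ over $\hat e$, so $\int_{\hat e}\hat\xi\cdot \hat n_{\hat e} = 0$. Moreover, by \eqref{prop-5}, $\dvr \hat\xi = 0$. Writing any constant $\hat\zeta_0$ as $-\nabla \hat q$ with $\hat q$ affine and integrating by parts gives
\[
\int_{\hat E}\hat\xi\cdot \hat\zeta_0
= \int_{\hat E}\hat q\,\dvr\hat\xi - \int_{\partial\hat E}\hat q\,\hat\xi\cdot\hat n
= -\sum_{\hat e\subset\partial\hat E}\int_{\hat e}\hat q\,\hat\xi\cdot\hat n_{\hat e},
\]
which is controlled by the zero--mean condition on $\hat\xi\cdot \hat n_{\hat e}$ together with the known polynomial degree of $\hat q$ on each edge.

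On the reference simplex, $\hat Z(\hat E)=P_1(\hat E)^d$, so $\hat\xi\in P_1^d$ and the vertex quadrature on a simplex is exact on $P_1$, hence the quadrature equals the integral above; the required cancellation of the boundary contributions then follows from the linearity of $\hat q$ along each edge combined with the zero--mean of $\hat\xi\cdot \hat n_{\hat e}$. On the reference square I would split
\[
\hat\zeta = \hat\zeta^{(1)} + r\,\curl(\hat x^2\hat y) + s\,\curl(\hat x\hat y^2),
\qquad \hat\zeta^{(1)}\in P_1^2\subset Q_1^2,
\]
and treat the two pieces separately: for $\hat\zeta^{(1)}$ the vertex quadrature is exact on $Q_1$, and the simplex--style integration--by--parts argument applies verbatim; for each of the two curl bubbles I would compute the $\mathcal{RT}_0$ projection by solving the four edge--flux equations in closed form and then directly evaluate $\hat\xi$ at the four vertices to verify that $\sum_i \hat\xi(\hat r_i)=0$.

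The main obstacle is the explicit bubble calculation on the square: one has to project the two bubbles $\curl(\hat x^2\hat y)$ and $\curl(\hat x\hat y^2)$ onto $\hat Z^0(\hat E)$ through the four edge averages and then verify vertex--sum cancellation by a short but mechanical computation. Once the vector identity \eqref{q-err-const-vector} is established in both geometric cases, the tensor identity \eqref{q-err-const-tensor} follows by applying the vector argument to each row of $\hat\tau$.
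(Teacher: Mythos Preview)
Your reduction of the tensor identity to the vector identity is correct and matches the paper. The difference from the paper is that the paper simply cites \cite{wheeler2006multipoint} for the reference square and asserts that the simplex case is similar, while you attempt a self-contained argument; that is worthwhile, but your argument has a genuine gap.

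The step that fails is the claim that ``the required cancellation of the boundary contributions then follows from the linearity of $\hat q$ along each edge combined with the zero--mean of $\hat\xi\cdot\hat n_{\hat e}$.'' Zero mean of $\hat\xi\cdot\hat n_{\hat e}$ only kills the \emph{constant} part of $\hat q$ on $\hat e$; the linear part contributes $\int_{\hat e}(\text{linear})(\text{linear, mean zero})\neq 0$ in general. On the reference square the identity still holds, but only because the contributions from the two pairs of \emph{opposite parallel} edges cancel one another; this is a geometric fact about the square that your argument does not invoke. So for the $P_1^2$ part on the square you need an additional observation (pairing opposite edges, or a direct vertex-sum check), not merely linearity plus zero mean.

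On the reference triangle the situation is worse: there are no opposite edges to provide the cancellation, and a direct check shows the vertex sum need not vanish. For instance, with $\hat\zeta=(\hat y,0)$ on the triangle with vertices $(0,0),(1,0),(0,1)$ one finds $\hat\Pi^0\hat\zeta=(\tfrac12,0)$ and
\[
\sum_{i=1}^{3}(\hat\zeta-\hat\Pi^0\hat\zeta)(\hat r_i)=(-\tfrac12,0)+(-\tfrac12,0)+(\tfrac12,0)=(-\tfrac12,0)\neq 0,
\]
so your edge-by-edge integration-by-parts route cannot close there. (The paper's own proof for the simplex case is only a one-line ``similarly'' pointing to the quadrilateral argument; this example suggests that assertion deserves scrutiny as well. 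In any case, as the paper notes after Lemma~\ref{lemma-quad-bound}, the $\Pi^0$ restriction and hence this lemma are not actually needed on simplices.) For the square, your plan to verify the two curl bubbles by direct vertex computation is sound and, combined with a corrected argument for the $P_1^2$ part, would give a complete proof of \eqref{q-err-const-vector} there.
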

\begin{proof}
  The property \eqref{q-err-const-vector} was shown in
  \cite[Lemma~2.2]{wheeler2006multipoint} on the reference square. The proof on
  the reference simplex follows in a similar way. The property
  \eqref{q-err-const-tensor} follows from \eqref{q-err-const-vector}.
  \end{proof}

For $\phi,\, \psi \in \X_h, \Q_h, Z_h, W_h$, denote the quadrature error by
\begin{align} \label{quad-err-def}
  \forall E \in \mathcal{T}_h, \quad \tet_E(L\phi,\psi) := (L\phi,\psi)_E - (L\phi,\psi)_{Q,E}, \quad
	\tet(L\phi,\psi) := (L\phi,\psi) - (L\phi,\psi)_{Q}.
\end{align}
The next result summarizes the quadrature error bounds.
\begin{lemma}\label{lemma-quad-bound}
  For all $E \in \mathcal{T}_h$, if $K^{-1}|_E\in W^{1,\infty}(E)$ and
  $A|_E\in W^{1,\infty}(E)$, then there is a constant $C>0$ independent of $h$ such that 
\begin{align}
  & \left|\theta_E\inp[K^{-1} \zeta]{\rho}\right| \leq C h\|K^{-1}\|_{1,\infty,E}\|\zeta\|_{1,E}\|\rho\|_E,
  && \forall \zeta \in Z_h,\, \rho\in Z^0_h, \label{theta-bound-vel} \\
  & \left|\theta_E\inp[A \tau]{\chi}\right| \leq C h\|A\|_{1,\infty,E}\|\tau\|_{1,E}\|\chi\|_E,
  && \forall \tau\in \X_h,\, \chi\in \X^0_h, \label{theta-bound-str}\\
  & \left|\theta_E\inp[A \tau]{w I}\right| \leq C h\|A\|_{1,\infty,E}\|\tau\|_{1,E}\|w\|_E,
  && \forall \tau\in \X_h,\, w\in W_h, \label{theta-bound-str-pr}\\
& \left|\theta_E\inp[A w I]{r I}\right| \leq C h\|A\|_{1,\infty,E}\|w\|_{E}\|r\|_E, && \forall w, r\in W_h, \label{theta-bound-pr}\\
  & \left|\theta_E\inp[ \tau]{\xi}\right| \leq C h\|\tau\|_{1,E}\|\xi\|_{E},
  && \forall \tau \in \X_h, \xi\in \Q_h, \label{theta-bound-rot}\\
& \left|\theta_E\inp[ \tau]{\xi}\right| \leq C h\|\tau\|_{E}\|\xi\|_{1,E},
  && \forall \tau \in \X^0_h, \xi\in \Q_h, \label{theta-bound-rot-2}\\
  & \left|\inp[K^{-1}\rho]{\zeta-\Pi^0 \zeta}_{Q,E}\right|
  \leq C h \|K^{-1}\|_{1,\infty,E} \|\rho\|_{1,E}\|\zeta\|_E,
  \quad && \forall \rho,\zeta \in Z_h, \label{er-bound-vel} \\
  & \left|\inp[A(\chi + wI)]{\tau-\Pi^0 \tau}_{Q,E}\right| 
  \leq C h \|A\|_{1,\infty,E}(\|\chi\|_{1,E} + \|w\|_{E})\|\tau\|_E, &&
  \forall \chi,\tau \in \X_h, w \in W_h, \label{er-bound-st} \\
  & \left|\inp[\xi]{\tau - \Pi^0 \tau}_{Q,E}\right| \leq C h\|\xi\|_{1,E}\|\tau\|_E,
  && \forall \xi \in \Q_h, \tau\in \X_h. \label{er-bound-rot}
\end{align}
\end{lemma}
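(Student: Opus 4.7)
The plan is to pull everything back to the reference element via the Piola transform, exploit the exactness of the vertex quadrature rule on the mapped polynomial spaces through Lemma~\ref{quad-error-const}, and then quantify the error coming from the non-constant part of the coefficient tensor ($K^{-1}$ or $A$) using its $W^{1,\infty}$-regularity. For each bound I fix an element $E$, map to $\hat E$, and reduce the task to estimating a quadrature discrepancy of the form $(\hat L \hat\phi,\hat\psi)_{\hat E} - (\hat L \hat\phi,\hat\psi)_{Q,\hat E}$ for mapped functions. The mapped coefficient $\hat L$ still inherits $W^{1,\infty}$ regularity from $L$, modulo the Jacobian factors produced by the Piola transform.

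For the coefficient-quadrature bounds \eqref{theta-bound-vel}--\eqref{theta-bound-pr}, the central idea is to split $L = L(\bar x_E) + (L - L(\bar x_E))$ at some fixed point $\bar x_E \in E$. On the constant piece, Lemma~\ref{quad-error-const} shows that the quadrature error vanishes, since the product of a constant tensor (or a piecewise constant pressure times the identity) with the $\RT_0$-consistent part of the $\BDM_1$ or $\RT_0$ shape functions is integrated exactly by the vertex rule. The remainder $L - L(\bar x_E)$ has $L^\infty(E)$-size $C h \|L\|_{1,\infty,E}$, yielding after the standard scaling to $\hat E$ and back the claimed factor $h\|L\|_{1,\infty,E}$ together with the appropriate norms of the two arguments. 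The stress-rotation bound \eqref{theta-bound-rot} is analogous with the coefficient replaced by the identity, while in \eqref{theta-bound-rot-2} the roles of smoothness are swapped: I expand $\xi$ instead of $L$ about a reference point and use the $H^1$-smoothness of the (bi)linear rotation space $\Q_h$.

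For the projection-error estimates \eqref{er-bound-vel}--\eqref{er-bound-rot}, I again split the coefficient into its value at a reference point plus an $\mathcal{O}(h)$ remainder. Lemma~\ref{quad-error-const} applied to the constant part produces exactly a term of the form $(\hat L_0 \hat\phi_0,\hat\zeta - \hat\Pi^0 \hat\zeta)_{Q,\hat E}$, which vanishes because the first factor is a constant tensor or vector on $\hat E$. The $\mathcal{O}(h)$ remainder, combined with the continuity of the vertex bilinear form (Corollary~\ref{quad-inner-prod}) and the $\RT_0$-projection stability \eqref{prop-5}, yields the claimed bounds. Throughout these arguments the $\BDM_1$-type argument is controlled in $H^1(E)$ (because the quadrature error effectively involves its gradient through the Taylor remainder), while the $\RT_0$-type argument appears only in $L^2$, matching the asymmetric norm structure on the right-hand sides.

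The principal technical obstacle is the quadrilateral case: the non-affine Piola transform makes $\hat L$ absorb derivatives of $DF_E$, and the mapped $\BDM_1$ basis is no longer polynomial of controlled degree for general quadrilaterals. The $O(h^2)$-perturbation assumption \eqref{h2-parall} and the resulting bounds \eqref{scaling-of-mapping-2} on $|DF_E|_{1,\infty,\hat E}$ are precisely what is needed to keep the extra derivative contributions at order $h$, so the final estimates take the same form as on simplices. Since several of the specific bounds \eqref{theta-bound-vel}, \eqref{theta-bound-rot}, and \eqref{er-bound-vel} were already proved in \cite{wheeler2006multipoint, msmfe1, msmfe2}, I would cite those verbatim and expand only on the new coupled variants \eqref{theta-bound-str-pr}, \eqref{theta-bound-pr}, and \eqref{er-bound-st}, where the interaction between $A$, a stress, and an identity-tensor pressure term requires combining the stress and pressure expansions inside a single quadrature error.
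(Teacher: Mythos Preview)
Your overall plan—mapping to the reference element, using the exactness of the vertex rule on low-order polynomials, and citing \cite{wheeler2006multipoint,msmfe1,msmfe2} for the previously established bounds—matches the paper, whose proof is essentially a list of citations together with the remark that Lemma~\ref{quad-error-const} is the key ingredient for \eqref{er-bound-vel}--\eqref{er-bound-rot} on simplices and that \eqref{theta-bound-str-pr}--\eqref{theta-bound-pr} follow as in the proof of \eqref{theta-bound-str}.

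There is, however, a gap in the mechanism you describe. For the $\theta$-bounds you split only the coefficient, $L=L_0+(L-L_0)$, and assert that $\theta_E(L_0\zeta,\rho)=0$ by Lemma~\ref{quad-error-const}. Neither claim holds: Lemma~\ref{quad-error-const} concerns the quadrature \emph{form} $(\,\cdot\,,\,\cdot\,)_Q$, not the quadrature \emph{error} $\theta$, and even with $L_0$ constant the mapped integrand $L_0\hat\zeta\cdot\hat\rho$ is quadratic on simplices (and worse on quadrilaterals), so the vertex rule does not integrate it exactly. What actually vanishes is $\hat\theta(\hat c,\hat\rho)$ for a \emph{constant} first argument $\hat c$ and $\hat\rho\in\RT_0$; the standard argument therefore applies a Bramble--Hilbert step to the full product (e.g.\ the mapped $K^{-1}\zeta$), producing its $H^1$-seminorm and hence the factor $h\|K^{-1}\|_{1,\infty,E}\|\zeta\|_{1,E}$. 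The same issue recurs in your treatment of \eqref{er-bound-vel}--\eqref{er-bound-rot}: Lemma~\ref{quad-error-const} gives $(\hat c,\hat\zeta-\hat\Pi^0\hat\zeta)_{Q,\hat E}=0$ only when $\hat c$ is constant, so one must subtract a constant from the entire first factor $K^{-1}\rho$ (respectively $A(\chi+wI)$ or $\xi$), not merely from the coefficient. With that correction your remaining ingredients—continuity of the quadrature form from Lemma~\ref{quad-coercive}, the stability \eqref{prop-5} of $\Pi^0$, and the quadrilateral mapping bounds \eqref{scaling-of-mapping-2}—are exactly the right ones.
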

\begin{proof}
The estimates \eqref{theta-bound-vel} and \eqref{er-bound-vel} can be
found in \cite{wheeler2006multipoint}. We note that
\eqref{er-bound-vel} was stated only on quadrilaterals in
\cite{wheeler2006multipoint}, but it also holds on simplices, since it
follows from mapping to the reference element and
\eqref{q-err-const-vector}. Bounds \eqref{theta-bound-str} and
\eqref{theta-bound-rot}--\eqref{theta-bound-rot-2} were proven in
\cite{msmfe1} on simplices and in \cite{msmfe2} on quadrilaterals. The
proofs of bounds \eqref{theta-bound-str-pr}--\eqref{theta-bound-pr}
for the two element types are similar to the respective proofs of
\eqref{theta-bound-str}. Bounds \eqref{er-bound-st}
and \eqref{er-bound-rot} were shown in \cite{msmfe2} on quadrilaterals.
Their proof on simplices is similar, using \eqref{q-err-const-tensor}.
\end{proof}

\begin{remark}
We note that, since the $\BDM_1$ space on quadrilaterals involves
quadratic terms, the quadrature bounds \eqref{theta-bound-vel},
\eqref{theta-bound-str}, and \eqref{theta-bound-rot-2} require
restricting one of the test functions to the $\RT_0$ space, which also leads
to the additional error terms in
\eqref{er-bound-vel}--\eqref{er-bound-rot}.  This restriction is not
necessary on simplices, where $\BDM_1$ is the space of linear
polynomials. In order to present a unified convergence proof for
simplices and quadrilaterals, we make the restriction to $\RT_0$ on
simplices as well.  A simplified proof without this
restriction on simplices is also possible, following the approaches
in \cite{wheeler2006multipoint} and \cite{msmfe1}.
\end{remark}

The above bounds are stated on an element $E \in \Tc_h$. In the convergence proof they will be used
by summing over all elements. We will assume that $\|K^{-1}\|_{1,\infty,E}$ and
$\|A\|_{1,\infty,E}$ are uniformly bounded independently of $h$ and will denote this space
by $W^{1,\infty}_{\Tc_h}$.

\subsection{Main convergence result}

\begin{theorem}\label{thm:error}
  If $A \in W^{1,\infty}_{\Tc_h}$, $K^{-1} \in W^{1,\infty}_{\Tc_h}$, and 
  the solution of \eqref{eq:cts1}--\eqref{eq:cts5} is sufficiently smooth, then
  there exists a positive constant $C$ independent of $h$ and $c_0$, such that 
the solution of \eqref{eq:msfmfe1}--\eqref{eq:msfmfe5} satisfies
\begin{align}
&\|\s-\s_h\|_{L^{\infty}(0,T;\Hdiv)} +\|u-u_h\|_{L^{\infty}(0,T;L^2(\O))} + \|\g-\g_h\|_{L^{\infty}(0,T;L^2(\O))}
  +\|z-z_h\|_{L^{\infty}(0,T;L^2(\O))} \nonumber \\
  &\qquad  + \|p-p_h\|_{L^{\infty}(0,T;L^2(\O))}
  +\|\s-\s_h\|_{L^{2}(0,T;\Hdiv)} +\|u-u_h\|_{L^{2}(0,T;L^2(\O))} \nonumber \\
&\qquad
  + \|\g-\g_h\|_{L^{2}(0,T;L^2(\O))} +\|z-z_h\|_{L^{2}(0,T;\Hdiv)} + \|p-p_h\|_{L^{2}(0,T;L^2(\O))} \nonumber \\
  &\quad
  \leq Ch \Big( \|\s\|_{H^1(0,T;H^1(\O))} 
+ \|\dvr \s\|_{L^\infty(0,T;H^1(\O))} + \|\dvr \s\|_{L^2(0,T;H^1(\O))} \nonumber \\  
&\qquad
+  \|u\|_{L^2(0,T;H^1(\O))}  +\|u\|_{L^{\infty}(0,T;H^1(\O))}
+\|\g\|_{H^1(0,T;H^1(\O))} 
\nonumber \\
&\qquad
+ \|z\|_{H^1(0,T;H^1(\O))}
+ \|\dvr z\|_{L^2(0,T;H^1(\O))}
+\|p\|_{H^1(0,T;H^1(\O))}  \Big).  \label{error}
\end{align}
\end{theorem}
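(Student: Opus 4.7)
The plan is to adapt the stability argument of Theorem~\ref{thm:stab} to an error system, combined with the approximation estimates \eqref{approx-1}--\eqref{approx-5} and the quadrature bounds of Lemma~\ref{lemma-quad-bound}. I would introduce the standard projection-based splittings
\[
\s - \s_h = \chi_\s + \phi_\s, \quad u - u_h = \chi_u + \phi_u, \quad \g - \g_h = \chi_\g + \phi_\g, \quad z - z_h = \chi_z + \phi_z, \quad p - p_h = \chi_p + \phi_p,
\]
with $\chi_\s = \s - \Pi\s$, $\chi_u = u - Q^0 u$, $\chi_\g = \g - Q^1\g$, $\chi_z = z - \Pi z$, $\chi_p = p - Q^0 p$, and $\phi_\s = \Pi\s - \s_h$, etc. The $\chi$ components are controlled at rate $O(h)$ in their natural norms by \eqref{approx-1}--\eqref{approx-5}. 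Subtracting \eqref{eq:msfmfe1}--\eqref{eq:msfmfe5} from \eqref{eq:cts1}--\eqref{eq:cts5} evaluated on discrete test functions, and using the orthogonality properties \eqref{const-proj}, \eqref{lin-proj}, and \eqref{div-prop}, I obtain an error system for $(\phi_\s,\phi_u,\phi_\g,\phi_z,\phi_p)$ whose right-hand sides collect quadrature errors $\theta(A\Pi\s,\t)$, $\theta(A\a Q^0 p I,\t)$, $\theta(\Pi\s,\xi)$, $\theta(K^{-1}\Pi z,\zeta)$, their time derivatives in the analog of \eqref{eq:msfmfe5}, and the residual approximation terms involving $\chi_\g$, $\dt\chi_p$, and $\dt\chi_\s$.

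Following the structure of the proof of Theorem~\ref{thm:stab}, I would differentiate the first error equation in time, test the five error equations with $(\t,v,\xi,\zeta,w) = (\phi_\s,\dt\phi_u,\dt\phi_\g,\phi_z,\phi_p)$, and combine to obtain the energy identity
\[
\tfrac{1}{2}\,\dt\bigl[\|A^{1/2}(\phi_\s+\a\phi_p I)\|_Q^2 + \|c_0^{1/2}\phi_p\|^2\bigr] + \|K^{-1/2}\phi_z\|_Q^2 = \mathcal{R}(t),
\]
where $\mathcal{R}(t)$ collects the right-hand side error contributions. After integrating in time and applying Cauchy--Schwarz and Young's inequalities, the inf--sup estimates \eqref{inf-sup-darcy}--\eqref{inf-sup-elast}, used as in \eqref{st-eq:7}--\eqref{st-eq:8}, convert control of $\|A^{1/2}(\phi_\s+\a\phi_p I)\|$ and $\|K^{-1/2}\phi_z\|$ into bounds on $\|\phi_u\|$, $\|\phi_\g\|$, and $\|\phi_p\|$, while the choice $v = J_E\dvr\phi_\s$ in the momentum error equation yields $\|\dvr\phi_\s\|$. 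To recover a bound on $\|\dvr\phi_z\|$, I would replicate \eqref{st-eq:12}--\eqref{st-eq:16}: differentiate the first four error equations in time, test with $(\dt\phi_\s,\dt\phi_u,\dt\phi_\g,\phi_z)$ combined with $w=\dt\phi_p$ in the pressure equation to obtain $\int_0^t\|c_0^{1/2}\dt\phi_p\|^2 + \|\dt A^{1/2}(\phi_\s+\a\phi_p I)\|^2$, and then choose $w = J_E\dvr\phi_z$ in the time-discretized fifth equation. The initial-data errors are absorbed via the elliptic projection \eqref{ell-proj} exactly as in \eqref{init-data-bound}. The triangle inequality with the $\chi$ bounds completes \eqref{error}.

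The principal obstacle is estimating the quadrature error terms, since on quadrilaterals the $\BDM_1$ test functions contain quadratic components for which the vertex rule is not exact. The key device, already encoded in Lemma~\ref{lemma-quad-bound}, is to decompose each test function as $\t = \Pi^0\t + (\t - \Pi^0\t)$ and then use \eqref{theta-bound-vel}--\eqref{theta-bound-rot-2} on the $\RT_0$ part together with \eqref{er-bound-vel}--\eqref{er-bound-rot} on the remainder; the factors $\|\Pi\s\|_{1,E}$, $\|\Pi z\|_{1,E}$, $\|Q^1\g\|_{1,E}$, $\|Q^0 p\|_{1,E}$ that appear are bounded via the continuity estimates \eqref{h1-continuity-q1}--\eqref{h1-continuity-bdm} by the $H^1(\O)$ norms of $\s$, $z$, $\g$, $p$. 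The time-differentiated quadrature and approximation terms pick up the $H^1(0,T;H^1(\O))$ norms of $\s$, $\g$, $z$, and $p$ appearing on the right-hand side of \eqref{error}. Since every step uses only Cauchy--Schwarz/Young (no Gronwall) and the coercivity of Corollary~\ref{quad-inner-prod}, and since $\|A^{1/2}(\phi_\s+\a\phi_p I)\|^2$ does not require $c_0 > 0$ to control $\phi_\s$ and $\phi_p$ separately (via the inf--sup route), all constants will be independent of $c_0$, yielding the locking-free bound \eqref{error}.
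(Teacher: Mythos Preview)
Your proposal is correct and follows essentially the same route as the paper: the same projection splitting, the same energy identity obtained by differentiating the constitutive error equation and testing with $(\phi_\s,\dt\phi_u,\dt\phi_\g,\phi_z,\phi_p)$, the same inf--sup arguments to recover $\phi_u,\phi_\g,\phi_p$, the same time-differentiated second estimate for $\|\dvr\phi_z\|$, and the same $\Pi^0$ splitting of test functions to handle quadrature on quadrilaterals. Two execution details the paper treats carefully that you should watch for: (i) the terms $(\psi_\s,\dt\phi_\g)$ and $\theta(\Pi\s,\dt\phi_\g)$ arising from testing the symmetry error equation with $\dt\phi_\g$ are handled in the paper by integration by parts in time, which avoids needing $\int_0^t\|\dt\phi_\g\|^2$ in the first estimate; and (ii) in the second (time-differentiated) estimate, the paper uses that $\a\phi_pI$ is elementwise in $\X_h^0$ and symmetric to rewrite every occurrence of $\dt\Pi^0\phi_\s$ or $\dt(\phi_\s-\Pi^0\phi_\s)$ in terms of $\dt(\phi_\s+\a\phi_pI)$, which is what keeps the bounds $c_0$-independent---your final sentence alludes to this, but the mechanism is this algebraic identity rather than the Darcy inf--sup.
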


\begin{proof}
The derivation of the error bounds follows the structure of the
stability analysis. It involves special manipulation of the error
system, combined with estimation of the approximation errors and the
quadrature errors. We form the error system by subtracting the
discrete problem \eqref{eq:msfmfe1}--\eqref{eq:msfmfe5} from the
continuous one \eqref{eq:cts1}--\eqref{eq:cts5}:
\begin{align}
  &\inp[A(\s + \a p I)]{\t}
  -\inp[A(\s_h + \a p_h I)]{\t}_Q
  + \inp[u-u_h]{\dvr{\t}} + \inp[\g]{\t}-\inp[\g_h]{\t}_Q = 0,
  && \forall \t \in \X_h, \label{er-eq:1_1}\\
&\inp[\dvr(\s-\s_h)]{v} = 0, && \forall v \in V_h, \label{er-eq:1_2}\\
&\inp[ \s]{\xi}-\inp[ \s_h]{\xi}_Q = 0, && \forall \xi \in \Q_h, \label{er-eq:1_3}\\
  &\inp[\K z]{\zeta} -\inp[\K z_h]{\zeta}_Q- \inp[p-p_h]{\dvr{\zeta}} =0, &&
  \forall \zeta \in Z_h, \label{er-eq:1_4}\\
  &\inp[c_0\dt( p- p_h)]{w} + \a\inp[\dt A( \s + \a p I)]{w I}
  -\a\inp[\dt A(\s_h + \a p_h I)]{w I}_Q  \nonumber \\
& \qquad\qquad\qquad\qquad + \inp[\dvr( z-z_h)]{w}  = 0, && \forall w \in W_h. \label{er-eq:1_5}
\end{align}
We split the errors into approximation and discrete errors as follows:
\begin{align*}
	&\s-\s_h = (\s-\Pi \s) + (\Pi \s -\s_h):= \psi_{\s} + \phi_{\s}, \\
	&u-u_h = (u- Q^0 u) + (Q^0 u -u_h):= \psi_u + \phi_u, \\
	&\g-\g_h = (\g- Q^1 \g) + (Q^1 \g -\g_h):= \psi_{\g} + \phi_{\g}, \\
	&z-z_h = (z- \Pi z) + (\Pi z -z_h):= \psi_z + \phi_z, \\
	&p-p_h = (p- Q^0 p) + (Q^0 p -p_h):= \psi_p + \phi_p. 
\end{align*}
We first manipulate the error
system \eqref{er-eq:1_1}--\eqref{er-eq:1_5} to obtain error terms
that can be bounded using either the orthogonality and approximation
properties of the projection operators,
\eqref{const-proj}--\eqref{div-prop} and
\eqref{approx-1}--\eqref{approx-5}, or the estimates for the
quadrature error terms, \eqref{theta-bound-vel}--\eqref{er-bound-rot}.
We rewrite the first equation \eqref{er-eq:1_1} in the following way:
\begin{align*}
&\inp[A(\phi_{\s} + \a\phi_p I)]{\t}_Q + \inp[\phi_u]{\dvr{\t}} + \inp[\phi_{\g}]{\t}_Q \\
  &\qquad =
  - \inp[A(\s + \a pI)]{\t} 
 + \inp[A(\Pi\s + \a Q^0pI)]{\t}_Q
  +\inp[\psi_u]{\dvr{\t}} +\inp[Q^1\g]{\t}_Q-\inp[\g]{ \t} .
\end{align*}
It follows from \eqref{const-proj} that $\inp[\psi_u]{\dvr{\t}} =0$.
With the goal to use a test function $\Pi^0\t$, which is needed to bound the
quadrature error, we manipulate the rest of the terms as follows:
\begin{align}
  &\inp[A(\phi_{\s} + \a\phi_p I)]{\t}_Q + \inp[\phi_u]{\dvr{\t}} 
  + \inp[\phi_{\g}]{\t}_Q \nonumber\\
	&\quad=-\inp[A(\s+\a p I)]{\t-\Pi^0\t}- \inp[A(\psi_{\s} +\a\psi_pI)]{\Pi^0\t} 
	-\inp[A(\Pi\s+\a Q^0pI)]{\Pi^0\t}\nonumber\\
	&\quad\quad+ \inp[A(\Pi\s+\a Q^0pI)]{\Pi^0\t}_Q
        +\inp[A(\Pi\s+\a Q^0pI)]{\t-\Pi^0\t}_Q\nonumber\\
        &\quad\quad -\inp[\g]{ \t-\Pi^0 \t} - \inp[\psi_{\g}]{ \Pi^0 \t}
	-\inp[Q^1\g]{ \Pi^0\t} +\inp[Q^1\g]{ \Pi^0 \t}_Q
        +\inp[Q^1\g]{\t-\Pi^0 \t}_Q. \label{er-eq:2_1_1}
\end{align}
Taking $\t-\Pi^0\t$ as a test function in \eqref{eq:cts1} and using \eqref{prop-5}, we obtain
\begin{align}
\inp[A(\s+\a p I)]{\t-\Pi^0\t}  + \inp[\g]{\t-\Pi^0 \t} =0. \label{er-eq:2_1_2}
\end{align}
Combining \eqref{er-eq:2_1_1}--\eqref{er-eq:2_1_2} and using the quadrature error notation, we get
\begin{align}
  &\inp[A(\phi_{\s} + \a\phi_p I)]{\t}_Q 
  + \inp[\phi_u]{\dvr{\t}} + \inp[\phi_{\g}]{\t}_Q \nonumber\\
  &\quad= - \inp[A(\psi_{\s} +\a\psi_pI)]{\Pi^0\t}- \inp[\psi_{\g}]{ \Pi^0 \t}
  -\theta\inp[A(\Pi\s + \a Q^0pI)]{\Pi^0\t} - \theta\inp[Q^1\g]{ \Pi^0\t} \nonumber\\
  &\quad\quad
    +\inp[A(\Pi\s+\a Q^0pI)]{\t-\Pi^0\t}_Q
	+\inp[Q^1\g]{\t-\Pi^0 \t}_Q. \label{er-eq:2_1_3}
\end{align}
We proceed with the manipulation of the rest of the equations in the error system
\eqref{er-eq:1_1}--\eqref{er-eq:1_5}. Using \eqref{div-prop} and taking
$v = J_E \, \dvr \phi_\s$ on each $E \in \Tc_h$, the second error equation
\eqref{er-eq:1_2} implies
\begin{align}
	\dvr \phi_{\s} = 0. \label{er-eq:2_2_1}
\end{align}
We rewrite the third error equation \eqref{er-eq:1_3} as
\begin{align}
\inp[\psi_{\s}]{\xi} + \theta\inp[\Pi \s]{\xi} + \inp[\phi_{\s}]{\xi}_Q = 0. \label{er-eq:2_2_2}
\end{align}
We rewrite the Darcy's law error equation \eqref{er-eq:1_4} in a way similar to
\eqref{er-eq:2_1_1}--\eqref{er-eq:2_1_3}:
\begin{align*}
  \inp[K^{-1}\phi_z]{\zeta}_Q - \inp[\phi_p]{\dvr \zeta}
  &=
    -\inp[K^{-1}z]{\zeta-\Pi^0\zeta}
  -\inp[K^{-1}(z-\Pi z)]{\Pi^0 \zeta} -\inp[K^{-1}\Pi z]{\Pi^0\zeta} \\
  &\quad+\inp[K^{-1}\Pi z]{\Pi^0 \zeta}_Q +\inp[K^{-1}\Pi z]{\zeta-\Pi^0 \zeta}_Q
+ \inp[\psi_p]{\dvr \zeta}.
\end{align*}
Using \eqref{const-proj}, we have that $\inp[\psi_p]{\dvr \zeta}=0$. Also,
testing \eqref{eq:cts4} with $\zeta-\Pi^0 \zeta$ yields $\inp[K^{-1}z]{\zeta-\Pi^0\zeta} =0$,
hence, we have
\begin{align}
  \inp[K^{-1}\phi_z]{\zeta}_Q-\inp[\phi_p]{\dvr \zeta} =
  &-\inp[K^{-1}\psi_z]{\Pi^0 \zeta} -\theta\inp[K^{-1}\Pi z]{\Pi^0\zeta} 
 +\inp[K^{-1}\Pi z]{\zeta-\Pi^0 \zeta}_Q . \label{er-eq:2_4}
\end{align}
Finally, using \eqref{div-prop}, we rewrite the last equation in
the error system, \eqref{er-eq:1_5}, as follows,
\begin{align}
  \inp[c_0\dt\phi_p]{w} &+ \a\inp[\dt A(\phi_{\s} + \a\phi_p I)]{w I}_Q
  + \inp[\dvr\phi_z]{w} \nonumber \\
  & = - \inp[c_0\dt\psi_p]{w}
  -\a\inp[\dt A(\psi_{\s} + \a\psi_p I) ]{w I}
  - \a \theta\inp[\dt A(\Pi \s + \a Q^0pI)]{w I}.  \label{er-eq:2_5}
\end{align}
We next combine the equations and make an appropriate
choice of the test functions. In particular, we differentiate
\eqref{er-eq:2_1_3} in time, set $\t =\phi_{\s},\, \xi = \dt\phi_{\g},\, \zeta
=\phi_z,\, w=\phi_p$, and combine \eqref{er-eq:2_1_3}--\eqref{er-eq:2_5}:
\begin{align}
  \frac{1}{2}&\dt\left(\|A^{1/2}(\phi_{\s}+\a \phi_p I)\|^2_Q
  +\|c_0^{1/2}\phi_p\|^2 \right) +\| K^{-1/2}\phi_z\|^2_Q  \nonumber \\
  & = - \inp[c_0\dt\psi_p]{\phi_p}
 - \inp[\dt A(\psi_{\s} +\a\psi_pI)]{\Pi^0\phi_{\s} + \a\phi_p I}
  - \inp[\dt \psi_{\g}]{ \Pi^0 \phi_{\s}}
 -\inp[K^{-1}\psi_z]{\Pi^0 \phi_z}
 +\inp[\psi_{\s}]{\dt \phi_{\g}}
  \nonumber \\
  &\quad 
  -\theta\inp[\dt A(\Pi\s + \a Q^0pI)]{\Pi^0\phi_{\s}+\a\phi_p I}
  - \theta\inp[\dt Q^1\g]{ \Pi^0\phi_{\s}}  
  -\theta\inp[K^{-1}\Pi z]{\Pi^0\phi_z}
  +  \theta\inp[\Pi \s]{ \dt \phi_{\g}}
  \nonumber \\
  &\quad
  +\inp[\dt A(\Pi\s+\a Q^0pI)]{\phi_{\s}-\Pi^0\phi_{\s}}_Q 
  +\inp[\dt Q^1\g]{\phi_{\s}-\Pi^0 \phi_{\s}}_Q
  +\inp[K^{-1}\Pi z]{\phi_z-\Pi^0 \phi_z}_Q,
  \label{er-eq:3}
\end{align}
where we have listed first the terms involving approximation error, followed by quadrature error
terms, and the three extra terms arising from the use of operator $\Pi^0$. We note that there are
two terms involving $\dt \phi_{\g}$, which will be handled by integration by parts after time
integration.
We proceed by deriving bounds for the rest of the terms appearing on the right-hand side.
For the approximation error terms, using  \eqref{prop-5} and \eqref{approx-1}--\eqref{approx-3},
we have
\begin{align}
  &\left| \inp[c_0\dt\psi_p]{\phi_p} + \inp[\dt A(\psi_{\s} +\a\psi_pI)]{\Pi^0\phi_{\s} + \a\phi_p I}
  + \inp[\dt \psi_{\g}]{ \Pi^0 \phi_{\s}}
 +\inp[K^{-1}\psi_z]{\Pi^0 \phi_z} \right| \nonumber \\
 &\qquad \qquad \leq Ch^2(\|\dt \s\|^2_1 +\|\dt p\|^2_1  +\|\dt \g\|^2_1 +\|z\|^2_1)
 +\epsilon_1(\|\phi_{\s}\|^2 + \|\phi_p\|^2+\|\phi_z\|^2).\label{er-eq:4_1}
\end{align}
For the quadrature error terms, applying \eqref{theta-bound-vel}--\eqref{theta-bound-pr}
and \eqref{h1-continuity-bdm}--\eqref{l2-continuity-q0} results in
\begin{align}
  &\left|\theta\inp[\dt A(\Pi\s + \a Q^0pI)]{\Pi^0\phi_{\s}+\a\phi_p I}
  + \theta\inp[\dt Q^1\g]{ \Pi^0\phi_{\s}}  
  + \theta\inp[K^{-1}\Pi z]{\Pi^0\phi_z}  \right| \nonumber \\
  &\qquad \qquad \leq Ch^2(\|\dt \s\|_1^2 + \|\dt p\|_1^2 + \|\dt \gamma\|_1^2  +\|z\|_1^2)
  +\epsilon_1(\|\phi_{\s}\|^2 + \|\phi_p\|^2 +\|\phi_z\|^2). \label{er-eq:4_2}
\end{align}
For the last three terms in \eqref{er-eq:3}, due to \eqref{er-bound-vel}--\eqref{er-bound-rot}
and \eqref{h1-continuity-bdm}--\eqref{h1-continuity-q1}, we obtain
\begin{align}
  & \left| \inp[\dt A(\Pi\s+\a Q^0pI)]{\phi_{\s}-\Pi^0\phi_{\s}}_Q
  +\inp[\dt Q^1\g]{\phi_{\s}-\Pi^0 \phi_{\s}}_Q
  +\inp[K^{-1}\Pi z]{\phi_z-\Pi^0 \phi_z}_Q  \right|
  \nonumber \\
  & \qquad \qquad \leq Ch^2(\|\dt \s\|^2_1 +\|\dt p\|^2_1 + \|\dt \g\|_1^2+\|z\|_1^2)
  +\epsilon_1(\|\phi_{\s}\|^2+\|\phi_z\|^2). \label{er-eq:4_3}
\end{align}
Next, we combine \eqref{er-eq:3}--\eqref{er-eq:4_3} and integrate in time from $0$
to an arbitrary $t\in (0,T]$:
\begin{align}
  & \|A^{1/2}(\phi_{\s}+\a \phi_p I)(t)\|^2_Q + \|c_0^{1/2}\phi_p(t)\|^2
  +\int_0^t\|K^{-1/2}\phi_z\|^2_Q\, ds \nonumber \\
  & \quad
  \leq \int_0^t \left(\inp[\psi_{\s}]{\dt \phi_{\g}} + \theta\inp[\Pi\s]{ \dt \phi_{\g}}\right) ds
  + \epsilon_1\int_0^t(\|\phi_{\s}\|^2 + \|\phi_p\|^2+\|\phi_z\|^2)\, ds \nonumber \\
  &\qquad+ Ch^2\int_0^t(\|\dt \s\|^2_1 +\|\dt p\|^2_1 +\|\dt \g\|^2_1+\|z\|^2_1)\, ds
  + \|A^{1/2}(\phi_{\s}+\a \phi_p I)(0)\|^2_Q
  + \|c_0^{1/2}\phi_p(0)\|^2. \label{er-eq:4}
\end{align}
For the first two terms on the right-hand side we use integration by parts:
\begin{align}
  &  \int_0^t \left(\inp[\psi_{\s}]{\dt \phi_{\g}}
  + \theta\inp[\Pi\s]{ \dt \phi_{\g}}\right) ds \nonumber \\
& \quad
  =    - \int_0^t \left(\inp[ \dt \psi_{\s}]{ \phi_{\g}}
  +  \theta\inp[ \dt\Pi \s]{  \phi_{\g}} \right) ds 
  +\inp[\psi_{\s}]{ \phi_{\g}}(t)
  + \theta\inp[\Pi \s]{  \phi_{\g}}(t)
- \inp[\psi_{\s}]{\phi_{\g}}(0) 
  - \theta\inp[\Pi \s]{ \phi_{\g}}(0) \nonumber \\
  & \quad
  \leq \epsilon_1 \left( \|\phi_{\g}(t)\|^2 + \int_0^t \|\phi_{\g}\|^2 ds \right)
  + C \|\phi_{\g}(0)\|^2
  + Ch^2 \left( \| \s(t)\|^2_1  + \| \s(0)\|^2_1 + \int_0^t \|\dt \s \|^2_1  \, ds\right).
  \label{ibp-term}
\end{align}
where we used \eqref{approx-3}, \eqref{theta-bound-rot}, and \eqref{h1-continuity-bdm}
in the last step. We proceed with bounding the terms involving
$\|\phi_{\s}\|$, $\|\phi_p\|$, $\|\phi_z\|$, and $\|\phi_{\g}\|$ that appear on the right-hand sides
of \eqref{er-eq:4} and \eqref{ibp-term}. Using the elasticity inf-sup condition \eqref{inf-sup-elast}
together with \eqref{er-eq:1_1}, we get
\begin{align}
	\|\phi_u\| + \|\phi_{\g}\| \leq &C\sup\limits_{0\neq\t \in \X_h} \frac{\inp[\phi_u]{\dvr{\t}} + \inp[\phi_{\g}]{\t}_Q}{\|\t\|_{\dvr}} \nonumber \\
	&= C\sup\limits_{0\neq \t \in \X_h}
        \frac{\inp[A(\s_h+\api)]{\t}_Q-\inp[A(\s+\a pI)]{\t}
          + \inp[Q^1\g]{ \t}_Q-\inp[\g]{ \t}}{\|\t\|_{\dvr}}.  \label{er-eq:7}
\end{align}
Using manipulations as in \eqref{er-eq:2_1_1}--\eqref{er-eq:2_1_3},
along with the bounds \eqref{approx-1}--\eqref{approx-3},
\eqref{theta-bound-str}, \eqref{theta-bound-rot} and
\eqref{er-bound-st}--\eqref{er-bound-rot}, we have
\begin{align}
	\inp[A(\s_h+\api)]{\t}&_Q-\inp[A(\s+\a pI)]{\t} +\inp[Q^1\g]{ \t}-\inp[\g]{ \t}_Q \nonumber\\
	&=-\inp[A(\phi_{\s}+\a\phi_pI)]{\t}_Q  - \inp[A(\psi_{\s} +\a\psi_pI)]{\Pi^0\t}- \inp[A(\Pi \s + \a Q^0 p I)]{\t - \Pi^0\t}_Q\nonumber \\
	&+\theta\inp[A(\Pi\s+\a Q^0pI)]{\Pi^0\t}_Q +\inp[Q^1\g]{\t-\Pi^0 \t}_Q - \theta\inp[Q^1 \g]{\Pi^0\t} -\inp[\psi_{\g}]{\Pi^0\t} \nonumber\\
	&\leq C \left( h(\|\s\|_1+\|p\|_1+\|\g\|_1)
        + \|A^{1/2}(\phi_{\s}+\a \phi_p I)\|\right) \|\t\|.\label{er-eq:8}
\end{align}
Combining \eqref{er-eq:7} and \eqref{er-eq:8}, we obtain
\begin{align}\label{inf-sup-elast-1}
  \|\phi_u\| + \|\phi_{\g}\| \leq Ch(\|\s\|_1+\|p\|_1+\|\g\|_1)
  + C\|A^{1/2}(\phi_{\s}+\a \phi_p I)\|, 
\end{align}
as well as
\begin{align}\label{inf-sup-elast-2}
  \int_0^t \left(\|\phi_u\|^2 + \|\phi_{\g}\|^2 \right) ds \leq C h^2
  \int_0^t \left(\|\s\|_1^2 +\|p\|_1^2 +\|\g\|_1^2 \right) ds
  + C \int_0^t \left(\|\phi_{\s}\|^2  + \|\phi_p\|^2 \right) ds.
\end{align}
For $\|\phi_p\|$, using the fact that $Z_h^0\times W_h$ is a stable Darcy pair, \eqref{er-eq:1_4}
and \eqref{approx-3} and \eqref{theta-bound-vel}, we obtain
\begin{align}
  \|\phi_p\| &\leq C\sup\limits_{0\neq \zeta\in Z^0_h}\frac{\inp[\dvr \zeta]{\phi_p}}{\|\zeta\|_{\dvr}}
  =  C\sup\limits_{0\neq \zeta\in Z^0_h}\frac{\inp[\K z]{\zeta} -\inp[\K z_h]{\zeta}_Q}{\|\zeta\|_{\dvr}} \nonumber \\
  & = C\sup\limits_{0\neq \zeta\in Z^0_h}\frac{\inp[K^{-1}\phi_z]{\zeta}_Q
    -\inp[K^{-1}\psi_z]{\zeta}+\theta\inp[K^{-1}\Pi z]{\zeta}}{\|\zeta\|_{\dvr}}
  \leq C(h\|z\|_1 + \|K^{-1/2}\phi_z\|),\label{er-eq:10_1}
\end{align}
implying
\begin{equation}\label{pres-inf-sup-bound}
  \int_0^t \|\phi_p\|^2 ds \le C \int_0^t \left(h^2 \|z\|_1^2 + \|K^{-1/2}\phi_z\|^2 \right) ds.
  \end{equation}
Finally, to obtain a bound on $\int_0^t \|\phi_{\s}\|^2 ds$, which appears on the right hand side
in \eqref{inf-sup-elast-2}, we choose $\t=\phi_{\s}$ in \eqref{er-eq:2_1_3} and
$\xi = \phi_{\g}$ in \eqref{er-eq:2_2_2} and combine them, using 
also \eqref{er-eq:2_2_1}, to obtain
\begin{align*}
  \|A^{1/2}\phi_{\s}\|_Q^2 & =  -\a\inp[A\phi_p I ]{\phi_{\s}}_Q
  - \inp[A(\psi_{\s} +\a\psi_pI)]{\Pi^0\phi_{\s}}- \inp[\psi_{\g}]{ \Pi^0 \phi_{\s}}
  \nonumber \\
  & \quad\qquad
  -\theta\inp[A(\Pi\s + \a Q^0pI)]{\Pi^0\phi_{\s}}
  -\theta\inp[Q^1\g]{ \Pi^0\phi_{\s}}+\inp[A(\Pi\s+\a Q^0pI)]{\phi_{\s}-\Pi^0\phi_{\s}}_Q
  \nonumber \\
  & \quad\qquad
  +\inp[Q^1\g]{\phi_{\s}-\Pi^0 \phi_{\s}}_Q
  + (\psi_\s,\phi_\g) + \theta(\Pi\s,\phi_\g) \nonumber \\
  & \leq Ch^2(\|\s\|_1^2+\|p\|_1^2+\|\g\|_1^2)
+C\|\phi_p\|^2 +\epsilon_2(\|\phi_{\g}\|^2 + \|\phi_{\s}\|^2),
\end{align*}
where in the last step we used \eqref{prop-5}, \eqref{approx-1}--\eqref{approx-3},
\eqref{theta-bound-str}, \eqref{theta-bound-rot}, \eqref{theta-bound-rot-2},
\eqref{er-bound-st}, and \eqref{er-bound-rot}.
Thus, we have
\begin{align}
  \int_0^t\|\phi_{\s}\|^2\, ds\leq  C h^2 \int_0^t (\|\s\|^2_1+\|p\|^2_1 +\|\g\|^2_1) \,ds
  +C\int_0^t \|\phi_p\|^2 ds + \epsilon_2 \int_0^t \|\phi_{\g}\|^2 ds.
\label{er-eq:11}
\end{align}
Combining \eqref{er-eq:2_2_1}, \eqref{er-eq:4}--\eqref{er-eq:11} and
choosing $\epsilon_2$ small enough, then $\epsilon_1$ small enough,
gives the estimate
\begin{align}
  &\|A^{1/2}(\phi_{\s} +\a \phi_p I)(t)\|^2 +\|\phi_u(t)\|^2
  + \|\phi_{\g}(t)\|^2+\|c_0^{1/2}\phi_p(t)\|^2 + \|\dvr \phi_{\s}\|^2 \nonumber \\
  &\qquad\qquad +\int_0^t \left(
 \|\phi_{\s}\|^2 +  \|\phi_u\|^2 + \|\phi_{\g}\|^2
 + \|K^{-1/2}\phi_z\|^2
  + \|\phi_p\|^2 + \|\dvr \phi_{\s}\|^2 \right) ds \nonumber \\
  &\qquad\leq  C \Big( h^2\int_0^t \left(\|\dt \s\|^2_1 +\|\dt p\|^2_1 +\|\dt \g\|^2_1
  + \| \s\|^2_1 +\| p\|^2_1 + \|\g\|^2_1 + \|z\|^2_1 \right) ds  \nonumber \\
  & \qquad\qquad\qquad
  + h^2\left(\| \s(t)\|^2_1 +\| p(t)\|^2_1 +\|\g(t)\|^2_1 + \|\s(0)\|_1^2\right) \nonumber \\
  & \qquad\qquad\qquad  + \| \phi_\s(0)\|^2 +\| \phi_p(0)\|^2 +\|\phi_\g(0)\|^2 \Big).
  \label{er-eq:14}
\end{align}

\noindent
{\bf Estimate for $\dvr \phi_z$.} We note that \eqref{er-eq:14} is a
self-contained error estimate. Similarly to the stability argument, we
proceed with bounding $\|\dvr \phi_z\|$, obtaining also bounds on
$\|K^{-1/2}\phi_z(t)\|$ and $\|\phi_p(t)\|$ for all $t$.
We choose $w = J_E \, \dvr \phi_z$ on each $E \in \Tc_h$
in  \eqref{er-eq:2_5}, which yields
\begin{align*}
  \| J_E^{1/2} \dvr \phi_z\|^2 = & -(c_0\dt \phi_p, J_E \, \dvr \phi_z)
  -(c_0\dt \psi_p, J_E \, \dvr \phi_z)
  - \a\inp[\dt A(\phi_{\s} + \a \phi_p I)]{J_E(\dvr \phi_z) I}_Q \\
  &
  -\a\inp[\dt A(\psi_{\s} + \a \psi_p I)]{J_E(\dvr \phi_z) I}
  - \a \theta\inp[\dt A(\Pi \s + \a Q^0 p I)]{J_E(\dvr \phi_z) I}.
\end{align*}
Using \eqref{approx-1}, \eqref{approx-3} and \eqref{theta-bound-str}--\eqref{theta-bound-rot},
we obtain
\begin{align}
  \|\dvr \phi_z\| \leq C\left(\|c_0^{1/2}\dt\phi_p\|+ \|\dt A^{1/2} (\phi_{\s}+\a \phi_pI)\|
  + h(\|\dt p\|_1 + \|\dt \s\|_1) \right). \label{er-eq:15}
\end{align}
It remains to bound the first two terms on the right-hand side of \eqref{er-eq:15}.
Similarly to the stability argument, cf.~\eqref{st-eq:13}, we differentiate
\eqref{er-eq:2_1_3}--\eqref{er-eq:2_4} in time,
set $\t =\dt \phi_{\s},\, \xi = \dt\phi_{\g},\, \zeta =\phi_z,\, w= \dt \phi_p$, and combine
\eqref{er-eq:2_1_3}--\eqref{er-eq:2_5}, resulting in a time-differentiated version of
\eqref{er-eq:3}:
\begin{align}
  &\frac{1}{2}\dt\|K^{-1/2}\phi_z\|^2_Q +\|\dt A^{1/2}(\phi_{\s}
  +\a \phi_p I)\|^2_Q +\|c_0^{1/2}\dt\phi_p\|^2 \nonumber\\
  & = - \inp[c_0\dt\psi_p]{\dt\phi_p}
 - \inp[\dt A(\psi_{\s} +\a\psi_pI)]{\dt(\Pi^0\phi_{\s} + \a\phi_p I)}
  - \inp[\dt \psi_{\g}]{ \dt\Pi^0 \phi_{\s}}
  -\inp[\dt K^{-1}\psi_z]{\Pi^0 \phi_z}
  \nonumber \\ 
   &\quad 
+\inp[\dt \psi_{\s}]{\dt \phi_{\g}}
   -\theta\inp[\dt A(\Pi\s + \a Q^0pI)]{\dt(\Pi^0\phi_{\s}+\a\phi_p I)}
  - \theta\inp[\dt Q^1\g]{ \dt\Pi^0\phi_{\s}}  
 \nonumber \\
  &\quad
  -\theta\inp[\dt K^{-1}\Pi z]{\Pi^0\phi_z}
  +  \theta\inp[\dt\Pi \s]{ \dt \phi_{\g}}
  +\inp[\dt A(\Pi\s+\a Q^0pI)]{\dt(\phi_{\s}-\Pi^0\phi_{\s})}_Q
 \nonumber \\
  &\quad  
  +\inp[\dt Q^1\g]{\dt(\phi_{\s}-\Pi^0 \phi_{\s})}_Q
  +\inp[\dt K^{-1}\Pi z]{\phi_z-\Pi^0 \phi_z}_Q.
  \label{er-eq:16}
\end{align}
Before bounding the terms on the right above, we note that we would
like the bounds to be in terms of $\|\dt A^{1/2}(\phi_{\s} +\a \phi_p I)\|$, since we do not
have separate control of $\|\dt\phi_\s\|$ and $\|\dt\phi_p\|$. 
  To this end, we first note that the projector $\Pi^0$ is defined element by element
  and let $\Pi^0_E: H^1(E,\M) \mapsto \X_h^0|_E$ be the local $\RT_0$
  projector on an element $E \in \Tc_h$. Using that
for each $E$, $\a\phi_p I|_E \in \X_h^0|_E$, we have that
$\Pi_E^0(\a\phi_p I) = (\a\phi_p I)|_E$. Then,
for the second and sixth term above we have 
$$
(\Pi^0\phi_{\s} + \a\phi_p I)|_E = \Pi_E^0(\phi_{\s} + \a\phi_p I).
$$
Similarly, for the tenth and eleventh term we have
$$
(\phi_{\s}-\Pi^0 \phi_{\s})|_E = (\phi_{\s} + \a\phi_p I)|_E  - \Pi_E^0(\phi_{\s} + \a\phi_p I).
$$
Also, since $\phi_p I$ is a symmetric matrix, for the third and seventh terms we have
$$
\inp[\dt \psi_{\g}]{ \dt\Pi^0 \phi_{\s}}_E = \inp[\dt \psi_{\g}]{ \dt\Pi_E^0(\phi_{\s} + \a\phi_p I)}_E,
\quad
\theta_E\inp[\dt Q^1\g]{ \dt\Pi^0\phi_{\s}} =
\theta_E\inp[\dt Q^1\g]{ \dt\Pi_E^0(\phi_{\s} + \a\phi_p I)}.
$$
Now, noting that the terms on the right in \eqref{er-eq:16} can be expressed as sums
over mesh elements, we use the above identities and bound these terms as
in \eqref{er-eq:4_1}--\eqref{er-eq:4_3}:
\begin{align}
  &\left| \inp[c_0\dt\psi_p]{\dt\phi_p} + \inp[\dt A(\psi_{\s} +\a\psi_pI)]{\dt(\Pi^0\phi_{\s}
    + \a\phi_p I)}
  + \inp[\dt \psi_{\g}]{ \dt\Pi^0 \phi_{\s}} \right. \nonumber \\
  & \qquad\qquad \left.
  +\inp[\dt K^{-1}\psi_z]{\Pi^0 \phi_z}
  +\inp[\dt \psi_{\s}]{\dt \phi_{\g}}\right|
  \nonumber \\
  &\qquad \leq Ch^2(\|\dt \s\|^2_1 +\|\dt p\|^2_1  +\|\dt \g\|^2_1 +\|\dt z\|^2_1) \nonumber \\
  & \qquad\qquad
  +\epsilon(\|c_0^{1/2}\dt\phi_p\|^2 + \|\dt A^{1/2}(\phi_{\s} + \a\phi_p I)\|^2
  + \|\dt\phi_\g\|^2 +\|\phi_z\|^2),\label{er-eq:16_1}
\end{align}
\begin{align}
  &\left|\theta\inp[\dt A(\Pi\s + \a Q^0pI)]{\dt(\Pi^0\phi_{\s}+\a\phi_p I)}
  + \theta\inp[\dt Q^1\g]{ \dt\Pi^0\phi_{\s}}  
  + \theta\inp[\dt K^{-1}\Pi z]{\Pi^0\phi_z}
  + \theta\inp[\dt\Pi \s]{ \dt \phi_{\g}} \right| \nonumber \\
  &\quad \leq Ch^2(\|\dt \s\|_1^2 +\|\dt p\|_1^2 +\|\dt \g\|_1^2  +\|\dt z\|_1^2)
  +\epsilon(\|\dt A^{1/2}(\phi_{\s} + \a\phi_p I)\|^2
  + \|\dt\phi_\g\|^2 +\|\phi_z\|^2), \label{er-eq:16_2}
\end{align}
\begin{align}
  & \left| \inp[\dt A(\Pi\s+\a Q^0pI)]{\dt(\phi_{\s}-\Pi^0\phi_{\s})}_Q
  +\inp[\dt Q^1\g]{\dt(\phi_{\s}-\Pi^0 \phi_{\s})}_Q
  +\inp[\dt K^{-1}\Pi z]{\phi_z-\Pi^0 \phi_z}_Q  \right|
  \nonumber \\
  & \quad \leq Ch^2(\|\dt \s\|^2_1 +\|\dt p\|^2_1 + \|\dt \g\|_1^2+\|\dt z\|_1^2)
  +\epsilon(\|\dt A^{1/2}(\phi_{\s} + \a\phi_p I)\|^2 + \|\phi_z\|^2). \label{er-eq:16_3}
\end{align}
Combining \eqref{er-eq:16}--\eqref{er-eq:16_3}, taking $\epsilon$
small enough, and integrating in time, we get
\begin{align}
  & \|K^{-1/2}\phi_z(t)\|^2_Q +\int_0^t\left(\|\dt A^{1/2}(\phi_{\s}
  +\a \phi_p I)\|^2_Q +\|c_0^{1/2}\dt\phi_p\|^2\right) ds \nonumber \\
  & \qquad \leq \|K^{-1/2}\phi_z(0)\|_Q^2 +
  \epsilon\int_0^t \left( \| \dt \phi_{\g}\|^2  + \|\phi_z\|^2 \right) ds\nonumber\\
  &\qquad \quad + Ch^2\int_0^t \left(\|\dt \s\|^2_1 +\|\dt p\|^2_1
  +\|\dt \g\|^2_1 +\|\dt z\|^2_1 \right) ds. \label{er-eq:17}
\end{align}
Similarly to \eqref{inf-sup-elast-1}, the elasticity inf-sup condition \eqref{inf-sup-elast},
differentiated in time, implies
\begin{align}\label{er-eq:20_1}
  \int_0^t \left(\|\dt \phi_u\|^2 + \|\dt \phi_{\g}\|^2 \right) ds \leq C h^2
  \int_0^t \left(\|\dt\s\|_1^2 +\|\dt p\|_1^2 +\|\dt \g\|_1^2 \right) ds
  + C \int_0^t \|\dt A^{1/2}(\phi_{\s} + \a\phi_p I)\|^2  ds.
\end{align}
Combining \eqref{er-eq:17}--\eqref{er-eq:20_1} with \eqref{er-eq:10_1}, we conclude that
\begin{align}
  & \|K^{-1/2}\phi_z(t)\|^2  + \| \phi_p(t)\|^2
  +\int_0^t\left(\|\dt A^{1/2}(\phi_{\s}+\a \phi_p I)\|^2 + \|c_0^{1/2}\dt\phi_p\|^2
    \right)\, ds \nonumber \\
	& \qquad \qquad \leq  \epsilon\int_0^t \|\phi_z\|^2 ds + Ch^2\|z(t)\|^2 \nonumber\\
    &\qquad \qquad \qquad + Ch^2\int_0^t \left(\|\dt \s\|^2_1 +\|\dt p\|^2_1
    +\|\dt \g\|^2_1 +\|\dt z\|^2_1 \right) ds. \label{er-eq:21}
\end{align}
Therefore, \eqref{er-eq:15} and \eqref{er-eq:21} give
\begin{align}
  &\|K^{-1/2}\phi_z(t)\|^2_Q  + \| \phi_p(t)\|^2+\int_0^t\|\dvr \phi_z\|^2 ds
  \leq \epsilon\int_0^t\|\phi_z\|^2\, ds \nonumber \\ 
	&\quad\quad\quad+ Ch^2\left(\int_0^t(\|\dt z\|_1^2+\|\dt\s\|_1^2+\|\dt p\|_1^2+\|\dt\g\|_1^2)\, ds  
	+ \|z(t)\|_1^2\right). \label{er-eq:22}
\end{align}
We also note that
\begin{align}
  \|\phi_{\s}\| \le C\left(\|A^{1/2}(\phi_{\s}+\a \phi_pI )\| + \|\phi_p \|\right). \label{er-eq:23}
\end{align}
Finally, combining \eqref{er-eq:14}, \eqref{er-eq:22} and \eqref{er-eq:23}, we obtain
\begin{align}
  &\|A^{1/2}(\phi_{\s} +\a \phi_p I)(t)\|^2 + \| \phi_{\s}(t)\|^2_{\dvr} +\|\phi_u(t)\|^2
  + \|\phi_{\g}(t)\|^2  + \|K^{-1/2}\phi_z(t)\|^2  + \| \phi_p(t)\|^2\nonumber\\
  & \qquad +\int_0^t \left(
\|\phi_{\s}\|^2_{\dvr}
  +  \|\phi_u\|^2 + \|\phi_{\g}\|^2 
+   \|K^{-1/2}\phi_z\|^2 + \|\dvr \phi_z\|^2  + \|\phi_p\|^2
  \right)  \nonumber \\
  & \leq
C \Big( h^2\int_0^t \left(\|\dt \s\|^2_1 +\|\dt p\|^2_1 +\|\dt \g\|^2_1+\|\dt z\|^2_1
  + \| \s\|^2_1 +\| p\|^2_1 + \|\g\|^2_1 + \|z\|_1^2 \right) ds  \nonumber \\
  & \qquad\qquad
  + h^2\left(\| \s(t)\|^2_1 +\| p(t)\|^2_1 +\|\g(t)\|^2_1 + \|z(t)\|_1^2 +
  \|\s(0)\|_1^2\right) \nonumber \\
  & \qquad\qquad
  + \| \phi_\s(0)\|^2 +\| \phi_p(0)\|^2 +\|\phi_\g(0)\|^2 + \|\phi_z(0)\|^2 \Big).
  \label{final-phi-bound}
\end{align}
For the initial error, we recall that the discrete initial data is taken to be the elliptic
projection of the continuous initial data, see \eqref{ell-proj}. Then, similarly to
\eqref{init-data-bound}, we have
\begin{align}\label{initial-error}
  \| \phi_\s(0)\| +\| \phi_p(0)\| +\|\phi_\g(0)\| + \|\phi_z(0)\| \le
  C (\| \psi_\s(0)\| + \| \psi_p(0)\| +\|\psi_\g(0)\| + \|\psi_z(0)\| + \|\psi_u(0)\|).
\end{align}
Bounds \eqref{final-phi-bound}--\eqref{initial-error}, combined with the use of the
triangle inequality and the approximation bounds
\eqref{approx-1}--\eqref{approx-5}, imply the assertion of the theorem.
\end{proof}

\section{Fully-discrete MSMFE--MFMFE method}\label{sec:discr}

In this section we present the fully-discrete method based on the
backward Euler time discretization and show how the algebraic system
at each time step can be reduced to a positive definite
cell-centered displacement-pressure system.

Let $0 = t_0 < t_1 < \cdots < t_N = T$ be a partition of the time
interval $[0,T]$ with time steps $\Delta t_n = t_n - t_{n-1}$, $n =
1,\ldots,N$, $\Delta t = \max_{1\le n \le N} \Delta t_n$.
Let $\varphi^n = \varphi(t_n)$ and $\dt^n \varphi =
(\varphi^n - \varphi^{n-1})/\Delta t_n$. For a Banach space $H$ on $\Omega$ with a
norm $\|\cdot\|_H$, we introduce the discrete-in-time norms
$$
\|\varphi\|_{l^2(0,T;H)} := \left(\sum_{n=1}^N \Delta t_n \|\varphi\|^2_H\right)^{\frac12}, \quad
\|\varphi\|_{l^\infty(0,T;H)} := \max_{0 \le n \le N} \|\varphi\|_H.
$$
The fully-discrete MSMFE--MFMFE method is: given compatible initial data
$(\sigma^0_h,u^0_h,\gamma^0_h,z^0_h,p^0_h)$, find,
for $n = 1,\ldots,N$, 
$(\sigma^n_h,u^n_h,\gamma^n_h,z^n_h,p^n_h) \in \X_h \times V_h \times
\Q_h \times Z_h \times W_h$ such that
\begin{align}
	  &\inp[A(\sigma^n_h + \a p^n_h I)]{\tau}_Q + \inp[u^n_h]{\dvr{\tau}}
          + \inp[\gamma^n_h]{\tau}_Q = 0 , &&
          \forall \tau \in \X_h, \label{eq:discr1}\\
	  &- \inp[\dvr{\sigma^n_h}]{v} = \inp[f^n]{v}, && \forall v \in V_h,
          \label{eq:discr2}\\
	  &\inp[\sigma^n_h]{\xi}_Q  = 0, && \forall \xi \in \Q_h,
          \label{eq:discr3}\\
	  &\inp[\K z^n_h]{\zeta}_Q - \inp[p^n_h]{\dvr{\zeta}}  = 0
          && \forall \zeta \in Z_h,
          \label{eq:discr4}\\
	&\inp[c_0\dt^n{p_h}]{w} + \a\inp[\dt^n A(\sigma_h + \a p_h I)]{wI}_Q
          + \inp[\dvr{z^n_h}]{w}   = \inp[q^n]{w}, && \forall w \in W_h.
          \label{eq:discr5}
\end{align}

\begin{lemma}\label{well-posed-fully-discrete}
  The fully discrete method \eqref{eq:discr1}--\eqref{eq:discr5}
  has a unique solution.
\end{lemma}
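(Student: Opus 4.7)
The plan is to reduce the assertion to showing uniqueness, since \eqref{eq:discr1}--\eqref{eq:discr5} is, at each time step $t_n$, a square finite-dimensional linear system in $(\sigma^n_h,u^n_h,\gamma^n_h,z^n_h,p^n_h)$ with right-hand side determined by $(f^n,q^n)$ and the previous-step data $(\sigma_h^{n-1},p_h^{n-1})$. Existence will then follow from uniqueness by a dimension count, so it suffices to prove that the homogeneous problem (zero source terms and zero previous-step values) has only the trivial solution. This closely parallels the range-condition argument used in the proof of Theorem~\ref{well-posed}, with the backward Euler difference $\bar\partial^n \varphi = \varphi^n/\Delta t_n$ playing the role of $\partial_t \varphi$ in the homogeneous case.

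For the homogeneous problem I would test \eqref{eq:discr1} with $\tau = \sigma^n_h$, \eqref{eq:discr2} with $v = u^n_h$, \eqref{eq:discr3} with $\xi = \gamma^n_h$, \eqref{eq:discr4} with $\zeta = \Delta t_n\, z^n_h$, and \eqref{eq:discr5} with $w = \Delta t_n\, p^n_h$, then sum. The duality pairings $(u^n_h,\dvr\sigma^n_h)$, $(\gamma^n_h,\sigma^n_h)_Q$, and $(p^n_h,\dvr z^n_h)$ cancel against the corresponding terms from the other equations. Using that in the homogeneous case $\Delta t_n \bar\partial^n p_h = p^n_h$ and $\Delta t_n \bar\partial^n A(\sigma_h + \alpha p_h I) = A(\sigma^n_h + \alpha p^n_h I)$, the combined identity reduces to
\[
\|A^{1/2}(\sigma^n_h + \alpha p^n_h I)\|_Q^2 + \Delta t_n\,\|K^{-1/2} z^n_h\|_Q^2 + \|c_0^{1/2} p^n_h\|^2 = 0.
\]
By the coercivity bound \eqref{quad-coercive-eqn} of Lemma~\ref{quad-coercive}, this forces $\sigma^n_h + \alpha p^n_h I = 0$ and $z^n_h = 0$. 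Substituting back into \eqref{eq:discr4} yields $(p^n_h,\dvr\zeta) = 0$ for all $\zeta \in Z_h$, so the Darcy inf-sup condition \eqref{inf-sup-darcy} gives $p^n_h = 0$, and hence $\sigma^n_h = 0$ as well. Finally, \eqref{eq:discr1} reduces to $(u^n_h,\dvr\tau) + (\gamma^n_h,\tau)_Q = 0$ for all $\tau \in \X_h$, and the elasticity inf-sup condition \eqref{inf-sup-elast} delivers $u^n_h = 0$ and $\gamma^n_h = 0$.

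I do not anticipate a real obstacle here: the argument is algebraic and strictly simpler than the semidiscrete well-posedness of Theorem~\ref{well-posed}, since no existence theory for degenerate evolution operators is needed. The only subtlety worth flagging is the scaling of the velocity and pressure test functions by $\Delta t_n$, which is what allows the storativity contribution to assemble into $\|c_0^{1/2} p^n_h\|^2$ while the $(\dvr z^n_h, p^n_h)$ terms still cancel; in particular the argument goes through without modification even when $c_0 = 0$, consistent with the stability theory of Section~5.
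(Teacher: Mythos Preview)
Your proposal is correct and follows essentially the same approach as the paper. The paper's proof simply observes that the fully discrete system at each step is a square linear system whose homogeneous version coincides (up to the $\Delta t_n$ scaling you identify) with the resolvent system \eqref{eq:msfmfe1-0}--\eqref{eq:msfmfe5-0} already shown to be uniquely solvable in the proof of Theorem~\ref{well-posed}; you have spelled out that same energy argument explicitly.
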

\begin{proof}
  The assertion of the lemma follows from the solvability of the
  resolvent system \eqref{eq:msfmfe1-0}--\eqref{eq:msfmfe5-0} shown in the proof of
  Theorem~\ref{well-posed}.
\end{proof}

The following convergence theorem can be proved using the framework in the proof of
Theorem~\ref{thm:error}, combined with standard tools for treating the discrete time
derivatives. 

\begin{theorem}\label{thm:error-discr}
  If $A \in W^{1,\infty}_{\Tc_h}$, $K^{-1} \in W^{1,\infty}_{\Tc_h}$, and 
  the solution of \eqref{eq:cts1}--\eqref{eq:cts5} is sufficiently smooth, then
  there exists a positive constant $C$ independent of $h$ and $c_0$, such that 
the solution of \eqref{eq:discr1}--\eqref{eq:discr5} satisfies
\begin{align}
&\|\s-\s_h\|_{l^{\infty}(0,T;\Hdiv)} +\|u-u_h\|_{l^{\infty}(0,T;L^2(\O))} + \|\g-\g_h\|_{l^{\infty}(0,T;L^2(\O))}
  +\|z-z_h\|_{l^{\infty}(0,T;L^2(\O))} \nonumber \\
  &\qquad  + \|p-p_h\|_{l^{\infty}(0,T;L^2(\O))}
  +\|\s-\s_h\|_{l^{2}(0,T;\Hdiv)} +\|u-u_h\|_{l^{2}(0,T;L^2(\O))} \nonumber \\
&\qquad
  + \|\g-\g_h\|_{l^{2}(0,T;L^2(\O))} +\|z-z_h\|_{l^{2}(0,T;\Hdiv)} + \|p-p_h\|_{l^{2}(0,T;L^2(\O))} \nonumber \\
  &\quad
  \leq Ch \Big( \|\s\|_{H^1(0,T;H^1(\O))} 
+ \|\dvr \s\|_{L^\infty(0,T;H^1(\O))} + \|\dvr \s\|_{L^2(0,T;H^1(\O))} \nonumber \\  
&\qquad
+  \|u\|_{L^2(0,T;H^1(\O))}  +\|u\|_{L^{\infty}(0,T;H^1(\O))}
+\|\g\|_{H^1(0,T;H^1(\O))} 
\nonumber \\
&\qquad
+ \|z\|_{H^1(0,T;H^1(\O))}
+ \|\dvr z\|_{L^2(0,T;H^1(\O))}
+\|p\|_{H^1(0,T;H^1(\O))} \Big) \nonumber \\
&\qquad
+ C \Delta t \left(\|\s\|_{H^2(0,T;L^2(\O))} + \|u\|_{H^2(0,T;L^2(\O))} + \|\g\|_{H^2(0,T;L^2(\O))}
+ \|p\|_{H^2(0,T;L^2(\O))} \right).  \label{error-discr}
\end{align}
\end{theorem}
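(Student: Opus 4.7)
The plan is to mirror the proof of Theorem~\ref{thm:error}, replacing time integration by summation over time steps and continuous time derivatives by the backward difference quotients $\dt^n$. First I would form the fully-discrete error system at each $t_n$ by subtracting \eqref{eq:discr1}--\eqref{eq:discr5} from \eqref{eq:cts1}--\eqref{eq:cts5} evaluated at $t=t_n$, splitting each error via the projectors $\Pi$, $Q^0$, $Q^1$, $\Pi^0$ exactly as in Section~6. The algebraic manipulations \eqref{er-eq:2_1_3}--\eqref{er-eq:2_5}, including the useful identity $(\Pi^0 \phi_\s + \a \phi_p I)|_E = \Pi^0_E(\phi_\s + \a \phi_p I)$ used to recover the coupled norm, carry over verbatim; only the time-discrete aspects require new work.

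The second step is a consistency analysis. For any smooth $\varphi$ I would write $\dt\varphi(t_n) = \dt^n \varphi^n + \rho_\varphi^n$, where the truncation errors satisfy
\begin{equation*}
\sum_{n=1}^N \Delta t_n \|\rho_\varphi^n\|^2 \le C(\Delta t)^2 \|\partial_{tt}\varphi\|_{L^2(0,T;L^2)}^2,
\end{equation*}
so this perturbation matches the $\Delta t$ contribution on the right of \eqref{error-discr}. Using the discrete analog of the semidiscrete test choice, $(\tau,v,\xi,\zeta,w) = (\phi_\s^n, \dt^n\phi_u^n, \dt^n\phi_\g^n, \phi_z^n, \phi_p^n)$, multiplying by $\Delta t_n$ and summing over $n$ produces telescoping quantities $\|A^{1/2}(\phi_\s^m + \a \phi_p^m I)\|_Q^2$ and $\|c_0^{1/2}\phi_p^m\|^2$ via the elementary inequality $(a^n - a^{n-1}) a^n \ge \tfrac12((a^n)^2 - (a^{n-1})^2)$, the discrete counterpart of time-integrating $\dt \|\cdot\|^2$.

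The integration-by-parts step \eqref{ibp-term}, which handled the $(\psi_\s,\dt\phi_\g)$ and $\theta(\Pi\s,\dt\phi_\g)$ terms, becomes discrete summation by parts,
\begin{equation*}
\sum_{n=1}^m \Delta t_n\, (a^n, \dt^n b^n) = (a^m, b^m) - (a^0, b^0) - \sum_{n=1}^m \Delta t_n\, (\dt^n a^n, b^{n-1}),
\end{equation*}
producing a boundary term controlled by $h^2\|\s(t_m)\|_1^2$ and a summed term involving $\dt^n \psi_\s^n$ that is bounded by $h^2 \|\dt\s\|_{l^2(0,T;H^1)}^2$. The inf-sup bounds \eqref{inf-sup-elast-1}--\eqref{inf-sup-elast-2}, \eqref{er-eq:10_1}, and the auxiliary $L^2$ bound \eqref{er-eq:11} on $\phi_\s$ carry over pointwise in $n$ without modification. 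For the divergence estimate of $\phi_z$, I would choose $w = J_E \dvr \phi_z^n$ in the discrete pressure equation \eqref{eq:discr5} and combine with the $\dt^n$-differenced versions of \eqref{eq:discr1}--\eqref{eq:discr4} tested with $(\dt^n\phi_\s^n, \dt^n\phi_u^n, \dt^n\phi_\g^n, \phi_z^n, \dt^n\phi_p^n)$, reproducing the discrete version of \eqref{er-eq:17}--\eqref{er-eq:21}.

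The main obstacle is ensuring that the discrete-time consistency errors are absorbed without introducing a factor of $c_0^{-1}$ or exponential growth in $T$. The $c_0$-robustness in the semidiscrete proof comes from never isolating $\|\dt p_h\|$: it appears only through the coupled norm $\|\dt A^{1/2}(\phi_\s + \a\phi_p I)\|$ paired with $\|c_0^{1/2}\dt p_h\|$. I would need to route each $\rho_\s^n, \rho_p^n, \rho_\g^n$ exclusively into that coupled quantity or into terms already estimated on the left, so that no step requires dividing by $c_0$. As in the semidiscrete case, Gronwall's inequality can be avoided by applying the inf-sup estimates on $\|\phi_u^n\|$, $\|\phi_\g^n\|$, $\|\phi_p^n\|$ at each $n$ rather than only in time-summed form, which is what ultimately delivers the long-time, $c_0$-independent bound \eqref{error-discr}.
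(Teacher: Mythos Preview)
Your proposal is correct and takes essentially the same approach as the paper, which in fact provides no detailed proof at all: it simply states that the result ``can be proved using the framework in the proof of Theorem~\ref{thm:error}, combined with standard tools for treating the discrete time derivatives.'' Your outline supplies exactly those standard tools---the telescoping identity for the discrete time derivative, summation by parts in place of \eqref{ibp-term}, and the Taylor-remainder bound on the truncation errors $\rho^n_\varphi$---while correctly noting that the $c_0$-robust closure via the inf-sup bounds \eqref{inf-sup-elast-1}, \eqref{er-eq:10_1}, and \eqref{er-eq:11} carries over pointwise in $n$ without Gronwall.
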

%

\subsection{Reduction to a cell-centered displacement-pressure system}

The vertex quadrature rule applied to the stress and velocity bilinear
forms, $\inp[A\sigma^n_h]{\tau}_Q$ in \eqref{eq:discr1} and $\inp[\K
  z^n_h]{\zeta}_Q$ in \eqref{eq:discr4}, respectively. results in the
corresponding matrices $\As$ and $\Az$ being block-diagonal with blocks
associated with the mesh vertices. More precisely, consider any
interior vertex $\r$ shared by $k$ edges or faces $e_1,\dots,e_k$ as
shown in Figure~\ref{fig:0}.  Let $\zeta_1,\ldots,\zeta_k$ be the
velocity degrees of freedom associated with the vertex and let
$z_1,\ldots,z_k$ be the corresponding normal velocity values, see
Figure~\ref{fig:0_1}. For the sake of visualization, the normal
velocities are drawn at a distance from the vertex. The vertex
quadrature rule $(K^{-1}\cdot,\cdot)_Q$ localizes the interaction of
basis functions around each vertex by decoupling them from the the
rest of the basis functions, so taking $\zeta_1,\ldots,\zeta_k$ in
\eqref{eq:discr4} results in a local $k \times k$ linear system.
Therefore $\Az$ is block-diagonal with $k \times k$ blocks associated
with mesh vertices. Similarly, $\As$ is block-diagonal with $d\,k
\times d\,k$ blocks, see Figure~\ref{fig:0_2}. Due to the positive
definiteness of $A$ and $K$ and Lemma~\ref{quad-coercive}, the blocks
of $\As$ and $\Az$ are symmetric and positive definite. Therefore the
velocity and stress can be easily eliminated by solving small local
linear systems. Moreover, the rotation can be further eliminated as
follows. Let $\Asg$ be the matrix corresponding to
$\inp[\sigma^n_h]{\xi}_Q$ in \eqref{eq:discr3}.  The localization of
the basis function interaction around vertices due to the vertex
quadrature rule implies that $\Asg$ is block-diagonal with $d(d-1)/2
\times dk$ blocks. After the stress elimination, the rotation matrix
is $\Ag\As^{-1}\Ag^T$. Since $\As$ is block-diagonal with $dk\times
dk$ blocks, then $\Ag\As^{-1}\Ag^T$ is block-diagonal with $d(d-1)/2
\times d(d-1)/2$ blocks. In fact, for $d=2$ the matrix is
diagonal. Each block couples the rotation degrees of freedom
associated with the corresponding vertex. The blocks are symmetric and
positive definite due to the inf-sup condition \eqref{inf-sup-elast}
and the positive definiteness of $\As^{-1}$. Therefore the rotation
can be easily eliminated, resulting in a cell-centered
displacement-pressure system. The above procedure can be expressed in
matrix form as follows, where $\s$ is the algebraic vector corresponding
to $\s_h^n$, etc.:
\begin{figure}
	\centering
	\begin{subfigure}[b]{0.45\textwidth}
          \centerline{\resizebox{4.5cm}{!}{\input ./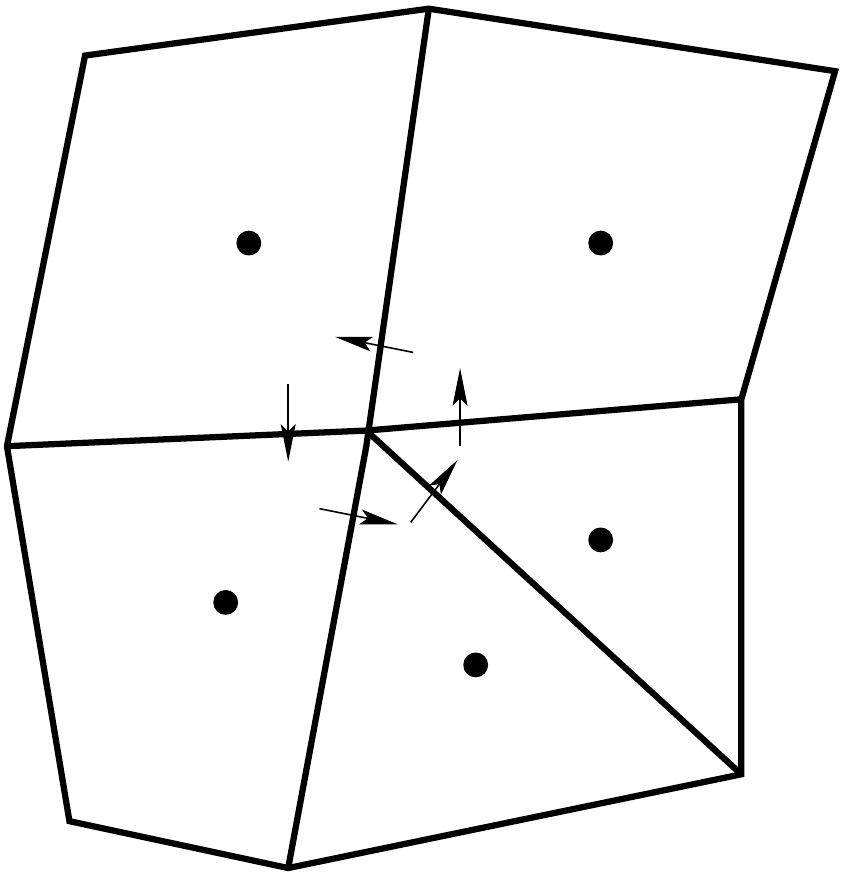_t}}

          \bigskip
          \smallskip
		\caption{Darcy degrees of freedom}
		\label{fig:0_1}
	\end{subfigure}
	\begin{subfigure}[b]{0.45\textwidth}
		\centerline{\includegraphics[width=.8\textwidth]{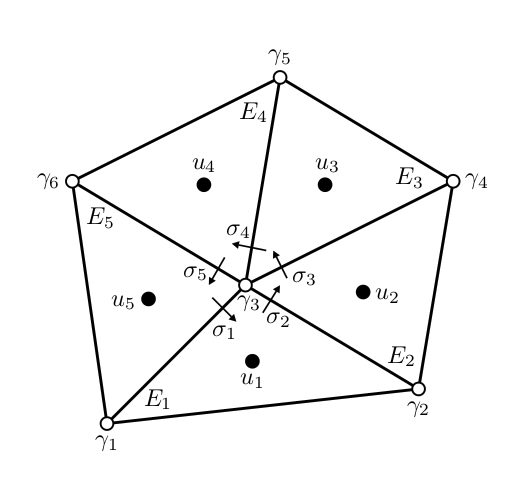}}
		\caption{Elasticity degrees of freedom}
		\label{fig:0_2}
	\end{subfigure}
	\caption{Interactions of the degrees of freedom in the MSMFE--MFMFE method.}\label{fig:0}
\end{figure}
\begin{align}
	&\hspace{-14em}\begin{pmatrix}
		\As   & \Asu^T & \Asg^T & 0     & \Asp^T \\
		-\Asu & 0      & 0      & 0     & 0 \\
		-\Asg & 0      & 0      & 0     & 0 \\
		0     & 0      & 0      & \Az   & \Azp^T \\
		\Asp  & 0      & 0      & -\Azp & \Ap 
	\end{pmatrix}
	\begin{pmatrix}
		\s \\ u \\ \g \\ z \\ p
	\end{pmatrix} \nonumber \\
	\xrightarrow{\s = -\As^{-1}\Asu^Tu - \As^{-1}\Asg^T\g - \As^{-1}\Asp^Tp}&
	\begin{pmatrix}
		\Asu\As^{-1}\Asu^T  & \Asu\As^{-1}\Asg^T  & 0     & \Asu\As^{-1}\Asp^T    \\
		\Asg\As^{-1}\Asu^T  & \Asg\As^{-1}\Asg^T  & 0     & \Asg\As^{-1}\Asp^T    \\
		0                   & 0                   & \Az   & \Azp^T                \\
		-\Asp\As^{-1}\Asu^T & -\Asp\As^{-1}\Asg^T & -\Azp & \Ap-\Asp\As^{-1}\Asp^T 
	\end{pmatrix}
	\begin{pmatrix}
		u \\ \g \\ z \\ p
	\end{pmatrix} \nonumber \\
	\xrightarrow{z = -\Az^{-1}\Azp^Tp}&
	\begin{pmatrix}
		 \Ausu    & \Ausg    & \Ausp    \\
		 \Ausg^T  & \Agsg    & \Agsp    \\
		 -\Ausp^T & -\Agsp^T & \Apszp
	\end{pmatrix}
	\begin{pmatrix}
		 u \\ \g \\ p
	\end{pmatrix} \nonumber \\
	\xrightarrow{\g = -\Agsg^{-1}\Agsp p - \Agsg^{-1}\Ausg^T u}&
	\begin{pmatrix}
		\Ausu - \Ausg\Agsg^{-1}\Ausg^T & \Ausp-\Ausg\Agsg^{-1}\Agsp    \\
		-\Ausp^T + \Agsp^T\Agsg^{-1}\Ausg^T       & \Apszp + \Agsp^T\Agsg^{-1}\Agsp
	\end{pmatrix}
	\begin{pmatrix}
		u \\ p
	\end{pmatrix}, \label{matrix-reduction}
\end{align}
where
\begin{align*}
	&\Ausu := \Asu\As^{-1}\Asu^T, &&\Ausg := \Asu\As^{-1}\Asg^T, \\
	&\Agsg := \Asg\As^{-1}\Asg^T, &&\Ausp := \Asu\As^{-1}\Asp^T, \\
	&\Agsp := \Asg\As^{-1}\Asp^T, &&\Apszp := \Ap-\Asp\As^{-1}\Asp^T + \Azp\Azz^{-1}\Azp^T.
\end{align*}

\begin{remark}
  The expression $z = -\Az^{-1}\Azp^Tp$ above means that the normal
  velocity at each vertex is explicitly expressed in terms of the
  pressures at the centers of the elements that share that vertex, see
  also Figure~\ref{fig:0_1}.  Similarly, $\s = -\As^{-1}\Asu^Tu -
  \As^{-1}\Asg^T\g - \As^{-1}\Asp^Tp$ means that the normal stress at
  each vertex is expressed in terms of the displacements, rotations,
  and pressures at the centers of the elements that share the
  vertex. These expressions motivate the terms multipoint flux and
  multipoint stress. They are used to recover the velocity and the stress
  after solving for the pressure and the displacement.
  \end{remark}

\begin{proposition}\label{spd}
  The cell-centered displacement-pressure matrix obtained in \eqref{matrix-reduction} is 
  block-skew-symmetric and positive definite.
\end{proposition}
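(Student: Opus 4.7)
The plan is to first verify the block-skew-symmetric structure of the reduced matrix $B$ in \eqref{matrix-reduction} by inspection, then to observe that this structure makes its quadratic form block-diagonal, so that positive definiteness reduces to SPD of the two diagonal blocks $B_{11} := \Ausu - \Ausg\Agsg^{-1}\Ausg^T$ and $B_{22} := \Apszp + \Agsp^T\Agsg^{-1}\Agsp$. Each of these I will handle via the appropriate inf-sup condition. The skew structure is immediate: since $\Agsg$ is symmetric, the transpose of the $(1,2)$ block $\Ausp-\Ausg\Agsg^{-1}\Agsp$ equals $\Ausp^T-\Agsp^T\Agsg^{-1}\Ausg^T$, i.e., the negative of the $(2,1)$ block, while $B_{11}$ and $B_{22}$ are manifestly symmetric. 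Hence for any $(u,p)$ the cross terms $u^T B_{12} p$ and $-p^T B_{12}^T u$ cancel, leaving $(u,p)^T B(u,p) = u^T B_{11} u + p^T B_{22} p$.

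For $B_{11}$ I would regard it as the Schur complement of $\begin{pmatrix} \Ausu & \Ausg \\ \Ausg^T & \Agsg \end{pmatrix}$ with respect to $\Agsg$, and prove SPD of this $2\times 2$ block directly. Using the definitions, a short expansion yields $(u,\gamma)^T \begin{pmatrix} \Ausu & \Ausg \\ \Ausg^T & \Agsg \end{pmatrix}(u,\gamma) = \|\As^{-1/2}(\Asu^T u + \Asg^T \gamma)\|^2$, which vanishes only when the functional $\tau \mapsto (u,\dvr\tau) + (\gamma,\tau)_Q$ is identically zero on $\X_h$. By the elasticity inf-sup condition \eqref{inf-sup-elast}, this forces $u=0$ and $\gamma=0$, so the $2\times 2$ block is SPD and hence so is $B_{11}$.

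For $B_{22}$, I would decompose it into three PSD pieces, $B_{22} = (\Ap - \Asp\As^{-1}\Asp^T) + \Azp\Azz^{-1}\Azp^T + \Agsp^T\Agsg^{-1}\Agsp$, and show that at least one is strictly positive. The third is trivially PSD as a symmetric square. The first admits the interpretation $p^T(\Ap - \Asp\As^{-1}\Asp^T)p = \min_{\sigma\in\X_h}[\|A^{1/2}(\sigma+\alpha p I)\|_Q^2 + c_0\|p\|^2]$; since $\alpha p I \in \X_h$ (piecewise constant), the minimizer is $\sigma = -\alpha p I$ and the value is exactly $c_0\|p\|^2$. The key term is the middle one: setting $z^{\ast} := -\Azz^{-1}\Azp^T p$, we have $p^T\Azp\Azz^{-1}\Azp^T p = \|K^{-1/2}z^{\ast}\|_Q^2$, and $z^{\ast}=0$ is equivalent to $(p,\dvr\zeta)=0$ for all $\zeta\in Z_h$; by the Darcy inf-sup condition \eqref{inf-sup-darcy}, this forces $p=0$, so $\Azp\Azz^{-1}\Azp^T$ is SPD. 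Summing the three contributions proves SPD of $B_{22}$. The main subtlety is keeping the argument uniform in $c_0$, in particular for the locking-prone case $c_0=0$: the first piece then degenerates to zero, and strict positivity of the pressure block rests entirely on the Darcy velocity Schur complement $\Azp\Azz^{-1}\Azp^T$, the same mechanism that produced the $c_0$-independent bounds in Theorems~\ref{thm:stab} and \ref{thm:error}.
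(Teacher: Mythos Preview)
Your proof follows the paper's approach essentially line by line: the same block-skew-symmetry computation, the same reduction to the two diagonal blocks, the same Schur-complement argument for $B_{11}$ via the elasticity inf-sup condition, and the same three-term decomposition of $B_{22}$ with the Darcy inf-sup condition supplying strict positivity of $\Azp\Azz^{-1}\Azp^T$.

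One inaccuracy to correct: your claim that $\alpha p I \in \X_h$ because it is piecewise constant is false. Elements of $\X_h$ must have continuous normal traces across interior edges/faces, and $\alpha p I$ has normal trace $\alpha p\, n$, which jumps whenever $p$ does. Hence the minimizer in your variational characterization is generally not $\sigma=-\alpha p I$, and the value of $p^T(\Ap - \Asp\As^{-1}\Asp^T)p$ is not exactly $c_0\|p\|^2$. This does not damage the argument: the quadratic form $\|A^{1/2}(\sigma+\alpha pI)\|_Q^2 + c_0\|p\|^2$ is nonnegative for every $\sigma\in\X_h$, so the Schur complement is positive semidefinite, which is all you need. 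The paper makes exactly this weaker claim (PSD, via the nonnegativity of $\|A^{1/2}(\tau_h+\alpha w_hI)\|_Q^2$) and then relies, as you do, on the Darcy term for strict positivity.
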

\begin{proof}
  Let us denote the four blocks of the matrix in \eqref{matrix-reduction} by $A_{ij}$,
  $i,j=1,2$. The block-skew-symmetric property follows from
  $$
  - A_{12}^T = - (\Ausp-\Ausg\Agsg^{-1}\Agsp)^T = -\Ausp^T + \Agsp^T\Agsg^{-1}\Ausg^T = A_{21},
  $$
  using that $\Agsg$ is symmetric. Therefore, for any
  $\begin{pmatrix} v^T & w^T \end{pmatrix} \ne 0$, we have
  $$
  \begin{pmatrix} v^T & w^T \end{pmatrix}
  \begin{pmatrix} A_{11} & A_{12} \\ A_{21} & A_{22} \end{pmatrix}
  \begin{pmatrix} v \\ w \end{pmatrix} = v^T A_{11} v + w^T A_{22} w,
  $$
  so we need to show that the diagonal blocks are
  positive definite. For $A_{11}$ we have
\begin{align*}
  A_{11} = \Ausu - \Ausg\Agsg^{-1}\Ausg^T = \Asu\As^{-1}\Asu^T
  - \Asu\As^{-1}\Asg^T (\Asg\As^{-1} \Asg^T)^{-1} \Asg\As^{-1}\Asu^T,
\end{align*}
which is a Schur complement of the displacement-rotation matrix
$$
\begin{pmatrix}\Asu\As^{-1}\Asu^T  & \Asu\As^{-1}\Asg^T \\
  \Asg\As^{-1}\Asu^T  & \Asg\As^{-1}\Asg^T \end{pmatrix}.
$$
The latter is symmetric and positive definite,
since for any $\begin{pmatrix} v^T & \xi^T \end{pmatrix} \neq 0$, 
\begin{align*}
    \begin{pmatrix} v^T & \xi^T \end{pmatrix}
    \begin{pmatrix} \Au\As^{-1}\Au^T & \Au\As^{-1}\Ag^T 
    \\ \Ag\As^{-1}\Au^T & \Ag\As^{-1}\Ag^T \end{pmatrix}
    \begin{pmatrix} v \\ \xi \end{pmatrix} 
    = (\Au^Tv + \Ag^T\xi)^T\As^{-1}(\Au^Tv + \Ag^T\xi) > 0,
\end{align*}
due to the positive definiteness of $\As$ and the elasticity inf-sup condition
\eqref{inf-sup-elast}.  Then $A_{11}$ is also symmetric and positive
definite, using \cite[Theorem 7.7.6]{Horn-Johnson}. For $A_{22}$ we have
$$
A_{22} = \Ap-\Asp\As^{-1}\Asp^T + \Azp\Azz^{-1}\Azp^T + \Agsp^T\Agsg^{-1}\Agsp.
$$
The matrix $\Ap-\Asp\As^{-1}\Asp^T$ is positive semidefinite, using
\cite[Theorem 7.7.6]{Horn-Johnson}, since it is a Schur complement of the matrix
$$
A^{\s p} := \begin{pmatrix} \As & \Asp ^T \\ \Asp & \Ap \end{pmatrix},
$$
which is positive semidefinite, since
$(\tau^T \, w^T) \, A^{\s p} \, (\tau \ w)^T = \|A^{1/2}(\tau_h + \a w_h I)\|_Q^2$.
The middle matrix $\Azp\Azz^{-1}\Azp^T$ is positive definite, using that $\Azz$ is
positive definite and the Darcy inf-sup condition \eqref{inf-sup-darcy}.
Finally, the matrix $\Agsp^T\Agsg^{-1}\Agsp$ is positive semidefinite, since
$\Agsg$ is positive definite. Combined, the three properties imply that
$A_{22}$ is symmetric and positive definite. 
\end{proof}

\begin{remark}
The positive-definiteness of the matrix in \eqref{matrix-reduction}
established in Proposition~\ref{spd} allows for an efficient Krylov
space iterative solver like GMRES to be used for the solution of the
reduced displacement-pressure system. Moreover, since the diagonal blocks
are symmetric and positive definite, the block-diagonal part of the matrix
provides an efficient preconditioner. 
\end{remark}

\section{Numerical results}
The proposed fully discrete MSMFE--MFMFE method has been implemented on
simplicial grids using the FEniCS Project \cite{LoggMardalEtAl2012a}
and on quadrilaterals using the deal.II finite element library
\cite{dealii}. In this section we provide several numerical tests
verifying the theoretical convergence rates and illustrating the
behavior of the method. We also present an example showing the
locking-free property of the method in the case of a small storativity
coefficient.

\subsection{Example 1}
We first verify the convergence of the method on simplicial grids in three
dimensions. We use the unit cube as a computational domain
and choose the analytical solution for pressure and displacement as
follows:
\begin{align*}
	p = \cos(t)(x+y+z+1.5), \quad u = \sin(t)\begin{pmatrix} -0.1(e^x - 1)\sin(\pi x)\sin(\pi y) \\ 
	-(e^x - 1)(y - \cos(\frac{\pi}{12})(y-0.5) + \sin(\frac{\pi}{12})(z-0.5)-0.5) \\
	-(e^x - 1)(z - \sin(\frac{\pi}{12})(y-0.5) - \cos(\frac{\pi}{12})(z-0.5)-0.5)
	\end{pmatrix}.
\end{align*}
The permeability tensor is of the form
\begin{align*}
	K = \begin{pmatrix} x^2+y^2+1 & 0 	      & 0 \\ 
						0 		  &	z^2+1  	  & \sin(xy)\\
						0		  & \sin(xy)  & x^2y^2+1 \end{pmatrix},
\end{align*}
and the rest of the parameters are presented in Table \ref{T0}.
\begin{table}[ht!]
	\begin{center}
		\begin{tabular}{l | r r}
			\hline
			Parameter                          & Symbol           & Values                        \\ 
			\hline
			Lame coefficient                   & $\mu$             & $100.0$ \\
			Lame coefficient                   & $\lambda$          & $100.0$ \\
			Mass storativity                   & $c_0$         & $1.0$          \\
			Biot-Willis constant               & $\alpha$                 & 1.0                             \\
			Total time                         & T                     & $10^{-3}$                           \\ 
			Time step                          & $\Delta t$           & $10^{-4}$ \\ \hline
		\end{tabular}
	\end{center}
	\caption{Parameters for Examples 1.}
	\label{T0}
\end{table}

Using the analytical solution provided above and equations
\eqref{biot-1}--\eqref{biot-2}, we obtain the rest of variables and
the right-hand side functions. Dirichlet boundary conditions for the
pressure and the displacement are specified on the entire boundary of
the domain.

\begin{table}
	\begin{center}
		\begin{tabular}{c|cc|cc|cc}
			\hline
			& \multicolumn{2}{c|}{$\|\sigma - \sigma_h\|_{L^2(0,T;L^2(\O))} $} & \multicolumn{2}{c|}{$ \|\dvr(\sigma - \sigma_h)\|_{L^2(0,T;L^2(\O))} $} & \multicolumn{2}{c}{$ \|u - u_h\|_{L^2(0,T;L^2(\O))}$}  \\ 
			$h$	&	error	&	rate	&	error	&	rate	&	error	&	rate	
			\\ \hline
1/4	&	1.55E-02	&	--	&	2.29E-01	&	--	&	8.43E-01	&	--	\\
1/8	&	4.97E-03	&	1.6	&	1.14E-01	&	1.0	&	2.30E-01	&	1.0	\\
1/16	&	2.16E-03	&	1.2	&	5.65E-02	&	1.0	&	8.85E-02	&	1.0	\\
1/32	&	1.03E-03	&	1.1	&	2.82E-02	&	1.0	&	4.11E-02	&	1.0	\\ \hline
			& \multicolumn{2}{c|}{$\|\g - \g_h\|_{L^2(0,T;L^2(\O))} $} & \multicolumn{2}{c|}{$ \|z - z_h\|_{L^2(0,T;L^2(\O))}$} & \multicolumn{2}{c}{$\|\dvr(z-z_h)\|_{L^2(0,T;L^2(\O))}$} \\
			$h$	&	error	&	rate	&	error	&	rate	&	error	&	rate	 \\ \hline
1/4	&	7.65E-01	&	--	&	4.34E-04	&	--	&	5.85E-02	&	--	\\
1/8	&	2.32E-01	&	1.7	&	2.26E-04	&	0.9	&	2.31E-02	&	1.3	\\
1/16	&	7.04E-02	&	1.7	&	1.14E-04	&	1.0	&	1.05E-02	&	1.1	\\
1/32	&	2.13E-02	&	1.7	&	5.68E-05	&	1.0	&	5.00E-03	&	1.1	\\ \hline
			& \multicolumn{2}{c|}{$\|p - p_h\|_{L^2(0,T;L^2(\O))} $} & \multicolumn{2}{c|}{$ \|\s - \s_h\|_{L^{\infty}(0,T;L^2(\O))}$} & \multicolumn{2}{c}{$\|u-u_h\|_{L^{\infty}(0,T;L^2(\O))}$} \\
			$h$	&	error	&	rate	&	error	&	rate	&	error	&	rate	 \\ \hline
1/4	&	2.58E-01	&	--	&	2.29E-01	&	--	&	2.55E+00	&	--	\\
1/8	&	1.26E-01	&	1.0	&	1.14E-01	&	1.0	&	7.12E-01	&	1.8	\\
1/16	&	6.18E-02	&	1.0	&	5.67E-02	&	1.0	&	2.91E-01	&	1.3	\\
1/32	&	3.09E-02	&	1.0	&	2.82E-02	&	1.0	&	1.38E-01	&	1.1	\\ \hline
			& \multicolumn{2}{c|}{$\|\g - \g_h\|_{L^{\infty}(0,T;L^2(\O))} $} & \multicolumn{2}{c|}{$ \|z - z_h\|_{L^{\infty}(0,T;L^2(\O))}$} & \multicolumn{2}{c}{$\|p-p_h\|_{L^{\infty}(0,T;L^2(\O))}$} \\
			$h$	&	error	&	rate	&	error	&	rate	&	error	&	rate	 \\ \hline
1/4	&	2.35E+00	&	--	&	4.78E-04	&	--	&	2.58E-01	&	--	\\
1/8	&	7.06E-01	&	1.7	&	2.57E-04	&	0.9	&	1.26E-01	&	1.0	\\
1/16	&	2.12E-01	&	1.7	&	1.33E-04	&	0.9	&	6.21E-02	&	1.0	\\
1/32	&	6.37E-02	&	1.7	&	6.69E-05	&	1.0	&	3.09E-02	&	1.0	\\ \hline
		\end{tabular}
	\end{center}
	\caption{Example 1, numerical errors and convergence rates.}
	\label{T1}
\end{table}

\begin{figure}
	\centering
	\begin{subfigure}[b]{0.23\textwidth}
		\includegraphics[width=\textwidth]{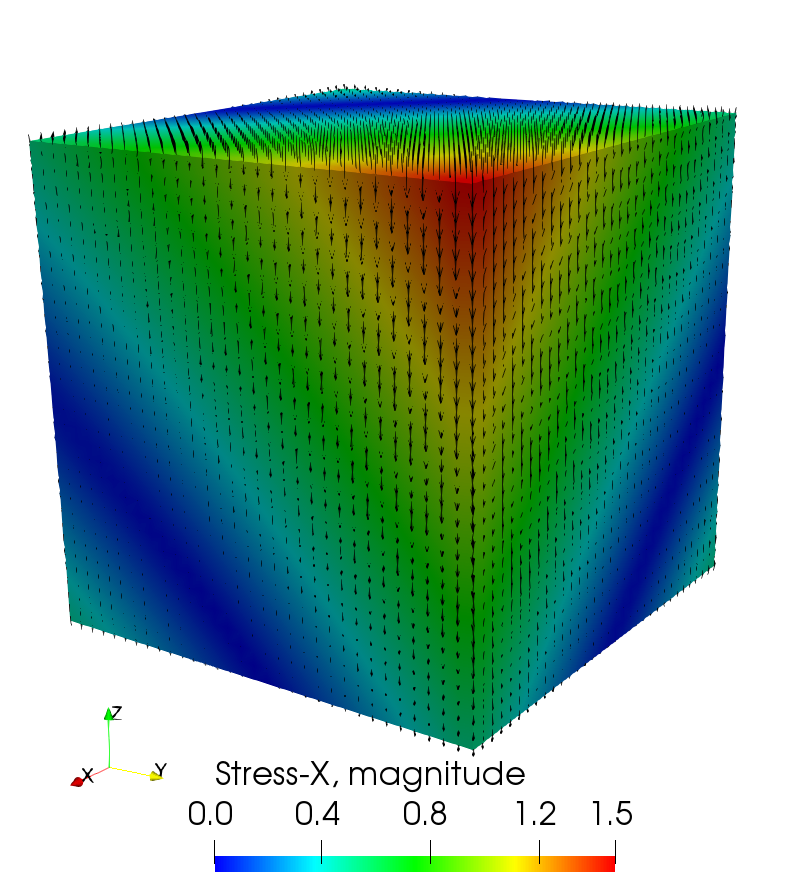}
		\caption{Stress, $x$-component}
		\label{fig:1_1}
	\end{subfigure}
	\begin{subfigure}[b]{0.23\textwidth}
		\includegraphics[width=\textwidth]{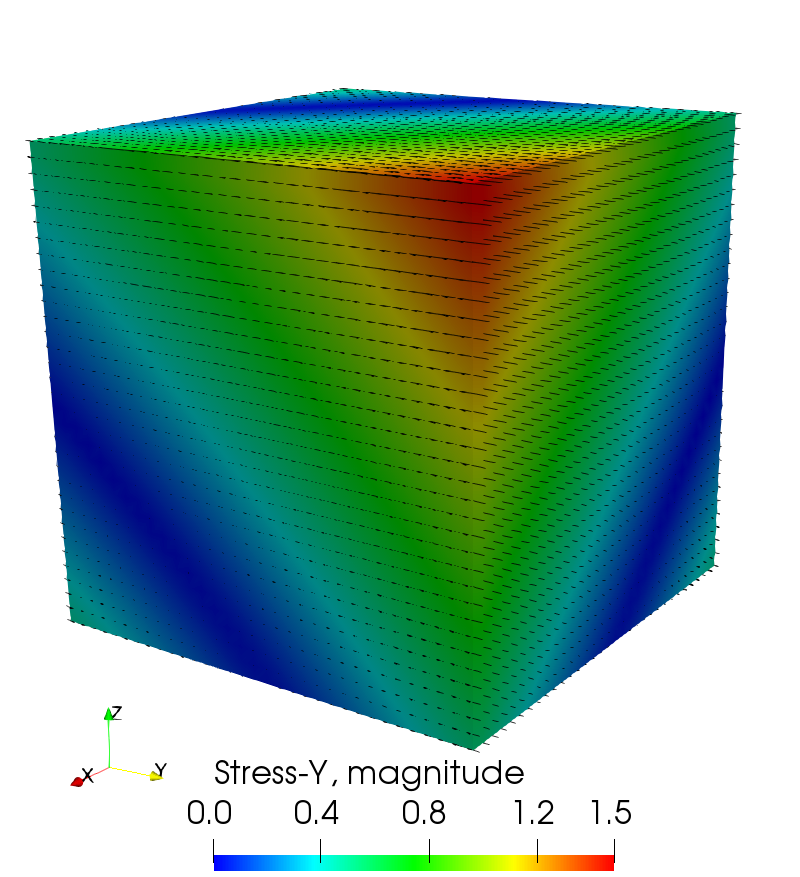}
		\caption{Stress, $y$-component}
		\label{fig:1_2}
	\end{subfigure}
	\begin{subfigure}[b]{0.23\textwidth}
		\includegraphics[width=\textwidth]{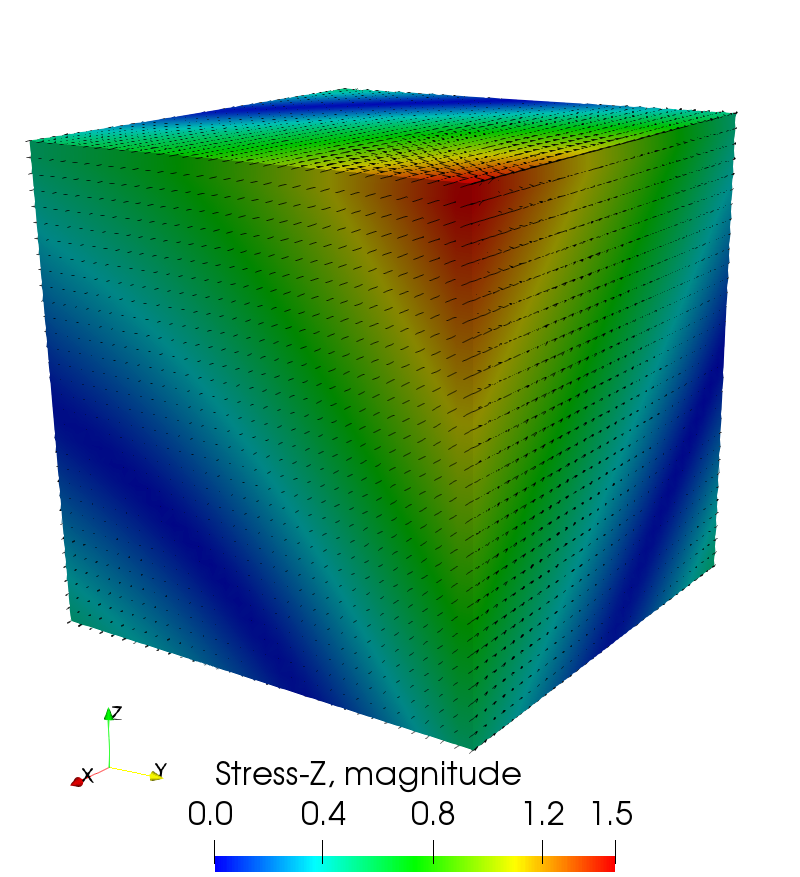}
		\caption{Stress, $z$-component}
		\label{fig:1_3}
	\end{subfigure}
	\begin{subfigure}[b]{0.23\textwidth}
		\includegraphics[width=\textwidth]{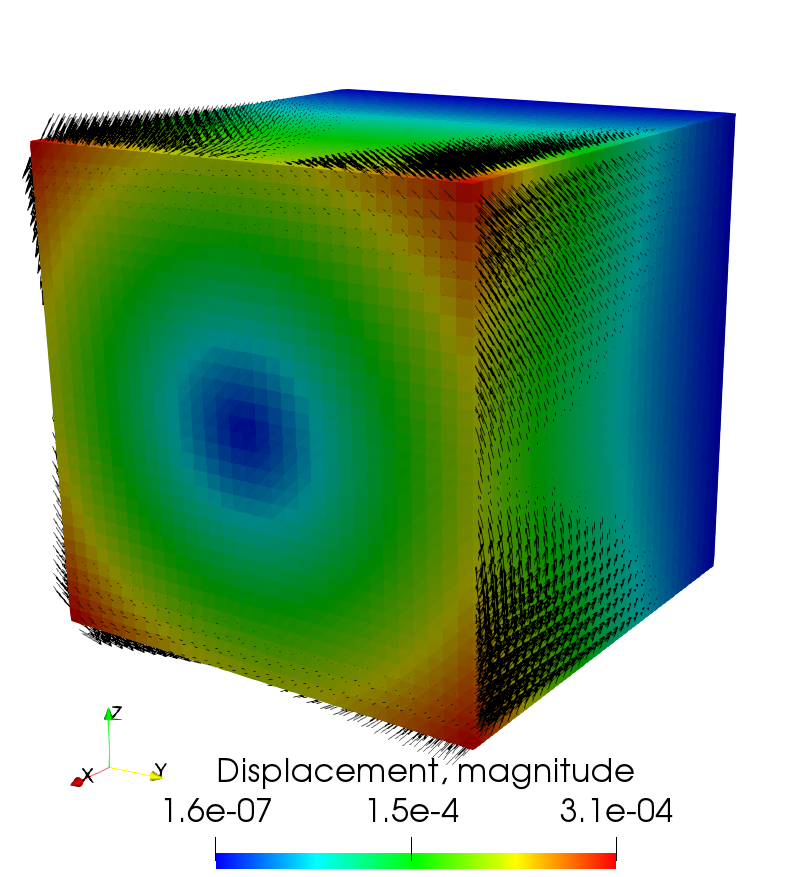}
		\caption{Displacement}
		\label{fig:1_4}
	\end{subfigure}
	\begin{subfigure}[b]{0.23\textwidth}
		\includegraphics[width=\textwidth]{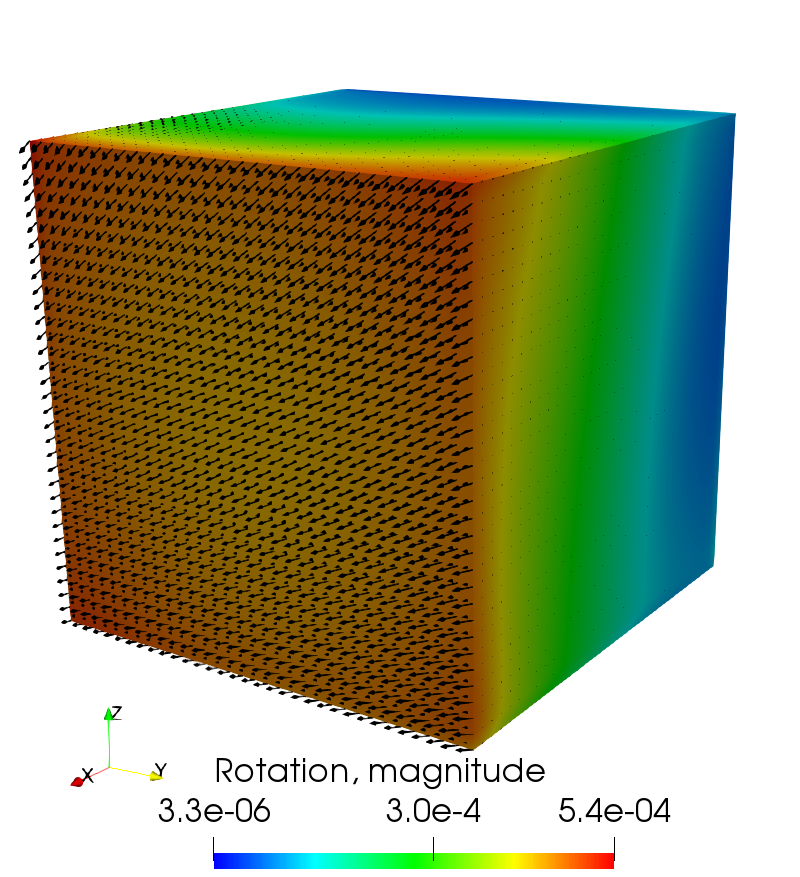}
		\caption{Rotation}
		\label{fig:1_5}
	\end{subfigure}
	\begin{subfigure}[b]{0.23\textwidth}
		\includegraphics[width=\textwidth]{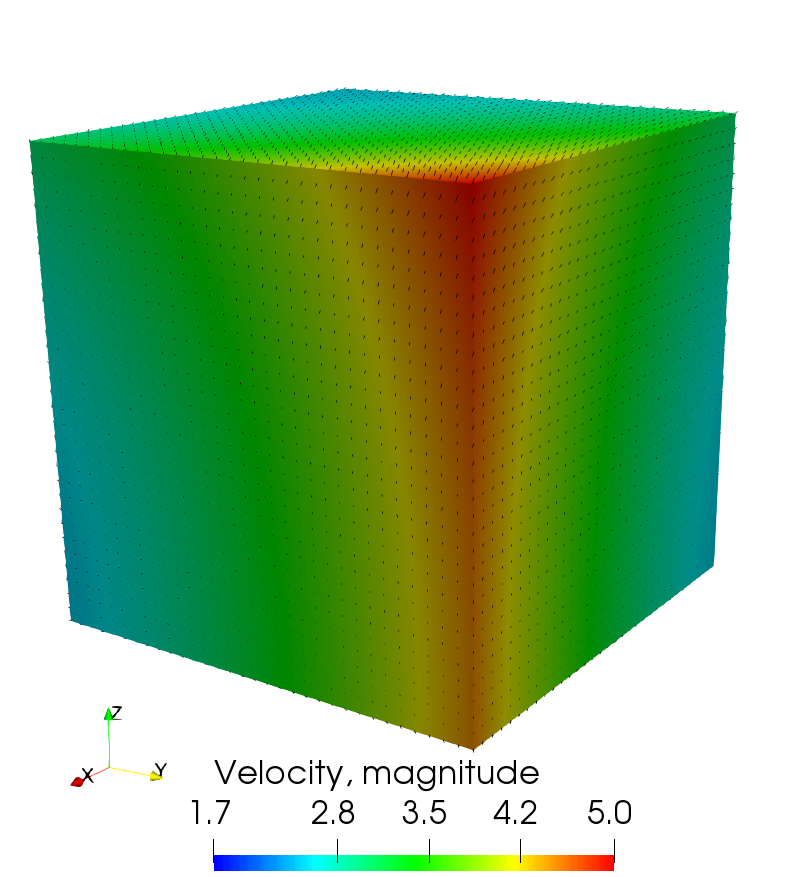}
		\caption{Darcy velocity}
		\label{fig:1_6}
	\end{subfigure}
	\begin{subfigure}[b]{0.23\textwidth}
		\includegraphics[width=\textwidth]{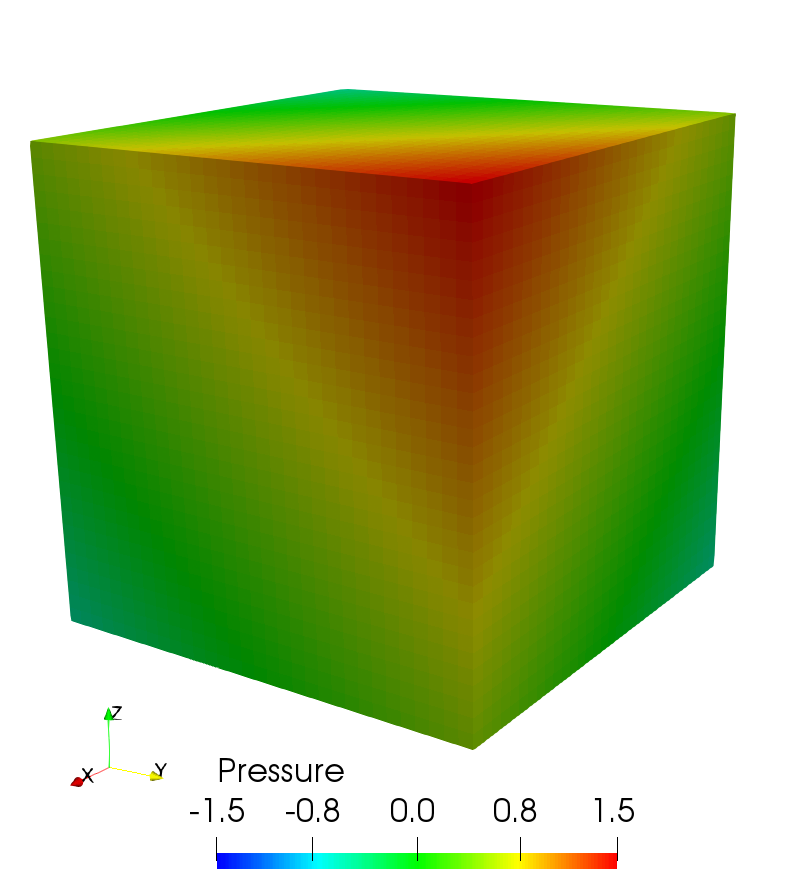}
		\caption{Darcy pressure}
		\label{fig:1_7}
	\end{subfigure}
	\caption{Example 1, computed solution with $h=\frac{1}{32}$ at the final time.}\label{fig:1}
\end{figure}

In Table \ref{T1} we present the relative errors and spatial
convergence rates on a sequence of mesh refinements. We take a
sufficiently small time step $\Delta t = 10^{-4}$ to ensure that the
time discretization error does not dominate. We observe at least first
order of convergence in all norms, as predicted by the theory. The
error $\|\gamma - \gamma_h\|$ exhibits convergence of order higher
than one, which can be attributed to the linear polynomial
approximation.
The numerical solution on the finest level 
at the final time is shown in Figure~\ref{fig:1}.

\subsection{Example 2}
In the second test case we study the convergence of the method on $h^2$-parallelogram grids. 
We consider the analytical solution
\begin{align*}
p=\exp(t)(\sin(\pi x)\cos(\pi y) + 10), \quad u= \exp(t) \begin{pmatrix}
											x^3y^4 + x^2 + \sin((1-x)(1-y))\cos(1-y)\\
											(1-x)^4(1-y)^3 + (1-y)^2 + \cos(xy)\sin(x)	\end{pmatrix},
\end{align*}
and the permeability tensor
$$ 
K = \begin{pmatrix}
(x+1)^2 + y^2 & \sin(xy) \\ \sin(xy) & (x+1)^2	\end{pmatrix}. 
$$
In this example as elasticity parameters we use the Poisson ratio
$\nu$ and the Young's modulus $E$. We set $\nu=0.2$ and take $E$ to
vary over the domain, $E = \sin(5\pi x)\sin(5\pi y) + 5$. The Lam\'{e}
parameters are then computed using the well known relations
$$ 
\lambda = \frac{E \nu}{(1+\nu)(1-2\nu)}, \quad \mu = \frac{E}{2(1+\nu)}. 
$$
In this test case we also illustrate the behavior of the method for small 
mass storativity and set $c_0 = 10^{-5}$. The Biot-Willis constant $\alpha$ and 
the time discretization parameters are the same as in Table \ref{T0}.

The computational domain for this case is obtained as follows. We start with the
unit square and partition it into a $4\times4$ square mesh with $h=\frac14$. We then
move the mesh points using the map
\begin{align*}
x = \xh +0.03\cos(3\pi \xh)\cos(3\pi\yh), \quad y=\yh -0.04\cos(3\pi\xh)\cos(3\pi\yh),
\end{align*}
which gives a deformed computational domain with a $4\times4$ quadrilateral grid, see
Figure \ref{fig:2}. A sequence of mesh refinements is obtained by a uniform refinement
of the elements of the coarse grid. The resulting sequence of meshes satisfies the
$h^2$-parallelogram property \eqref{h2-parall}.

As in the previous test case, we observe at least first order
convergence for all variables in their respective norms, see
Table~\ref{T2}. The computed solution with $h=\frac{1}{32}$ at the
final time is shown in Figure~\ref{fig:2}. This example not only
confirms the theoretical convergence rates on $h^2$-parallelogram grids, but also
illustrates that the method can handle well variable elasticity parameters and small
mass storativity.

\begin{table}
	\begin{center}
		\begin{tabular}{c|cc|cc|cc}
			\hline
			& \multicolumn{2}{c|}{$\|\sigma - \sigma_h\|_{L^2(0,T;L^2(\O))} $} & \multicolumn{2}{c|}{$ \|\dvr(\sigma - \sigma_h)\|_{L^2(0,T;L^2(\O))} $} & \multicolumn{2}{c}{$ \|u - u_h\|_{L^2(0,T;L^2(\O))}$}  \\ 
			$h$	&	error	&	rate	&	error	&	rate	&	error	&	rate	
			\\ \hline
1/8	&	9.65E-02	&	--	&	1.30E-01	&	--	&	8.02E-02	&	--	\\
1/16	&	4.97E-02	&	1.0	&	6.46E-02	&	1.0	&	3.97E-02	&	1.0	\\
1/32	&	2.52E-02	&	1.0	&	3.23E-02	&	1.0	&	1.98E-02	&	1.0	\\
1/64	&	1.27E-02	&	1.0	&	1.61E-02	&	1.0	&	9.87E-03	&	1.0	\\
1/128	&	6.35E-03	&	1.0	&	8.07E-03	&	1.0	&	4.93E-03	&	1.0	\\ \hline
			& \multicolumn{2}{c|}{$\|\g - \g_h\|_{L^2(0,T;L^2(\O))} $} & \multicolumn{2}{c|}{$ \|z - z_h\|_{L^2(0,T;L^2(\O))}$} & \multicolumn{2}{c}{$\|\dvr(z-z_h)\|_{L^2(0,T;L^2(\O))}$} \\
			$h$	&	error	&	rate	&	error	&	rate	&	error	&	rate	 \\ \hline
1/8	&	2.03E-01	&	--	&	1.44E-01	&	--	&	2.88E-01	&	--	\\
1/16	&	7.51E-02	&	1.4	&	7.05E-02	&	1.0	&	1.75E-01	&	0.7	\\
1/32	&	2.77E-02	&	1.4	&	3.47E-02	&	1.0	&	8.18E-02	&	1.1	\\
1/64	&	1.02E-02	&	1.5	&	1.72E-02	&	1.0	&	3.35E-02	&	1.3	\\
1/128	&	3.70E-03	&	1.5	&	8.60E-03	&	1.0	&	1.39E-02	&	1.3	\\ \hline
			& \multicolumn{2}{c|}{$\|p - p_h\|_{L^2(0,T;L^2(\O))} $} & \multicolumn{2}{c|}{$ \|\s - \s_h\|_{L^{\infty}(0,T;L^2(\O))}$} & \multicolumn{2}{c}{$\|u-u_h\|_{L^{\infty}(0,T;L^2(\O))}$} \\
			$h$	&	error	&	rate	&	error	&	rate	&	error	&	rate	 \\ \hline
1/8	&	8.97E-03	&	--	&	9.65E-02	&	--	&	8.02E-02	&	--	\\
1/16	&	4.49E-03	&	1.0	&	4.97E-02	&	1.0	&	3.97E-02	&	1.0	\\
1/32	&	2.24E-03	&	1.0	&	2.52E-02	&	1.0	&	1.98E-02	&	1.0	\\
1/64	&	1.12E-03	&	1.0	&	1.27E-02	&	1.0	&	9.87E-03	&	1.0	\\
1/128	&	5.61E-04	&	1.0	&	6.35E-03	&	1.0	&	4.93E-03	&	1.0	\\ \hline
			& \multicolumn{2}{c|}{$\|\g - \g_h\|_{L^{\infty}(0,T;L^2(\O))} $} & \multicolumn{2}{c|}{$ \|z - z_h\|_{L^{\infty}(0,T;L^2(\O))}$} & \multicolumn{2}{c}{$\|p-p_h\|_{L^{\infty}(0,T;L^2(\O))}$} \\
			$h$	&	error	&	rate	&	error	&	rate	&	error	&	rate	 \\ \hline
1/8	&	2.03E-01	&	--	&	1.60E-01	&	--	&	9.03E-03	&	--	\\
1/16	&	7.51E-02	&	1.4	&	8.07E-02	&	1.0	&	4.50E-03	&	1.0	\\
1/32	&	2.77E-02	&	1.4	&	3.69E-02	&	1.1	&	2.24E-03	&	1.0	\\
1/64	&	1.02E-02	&	1.5	&	1.75E-02	&	1.1	&	1.12E-03	&	1.0	\\
1/128	&	3.70E-03	&	1.5	&	8.64E-03	&	1.0	&	5.61E-04	&	1.0	\\ \hline
		\end{tabular}
	\end{center}
	\caption{Example 2, numerical errors and convergence rates.}
	\label{T2}
\end{table}

\begin{figure}
	\centering
	\begin{subfigure}[b]{0.23\textwidth}
		\includegraphics[width=\textwidth]{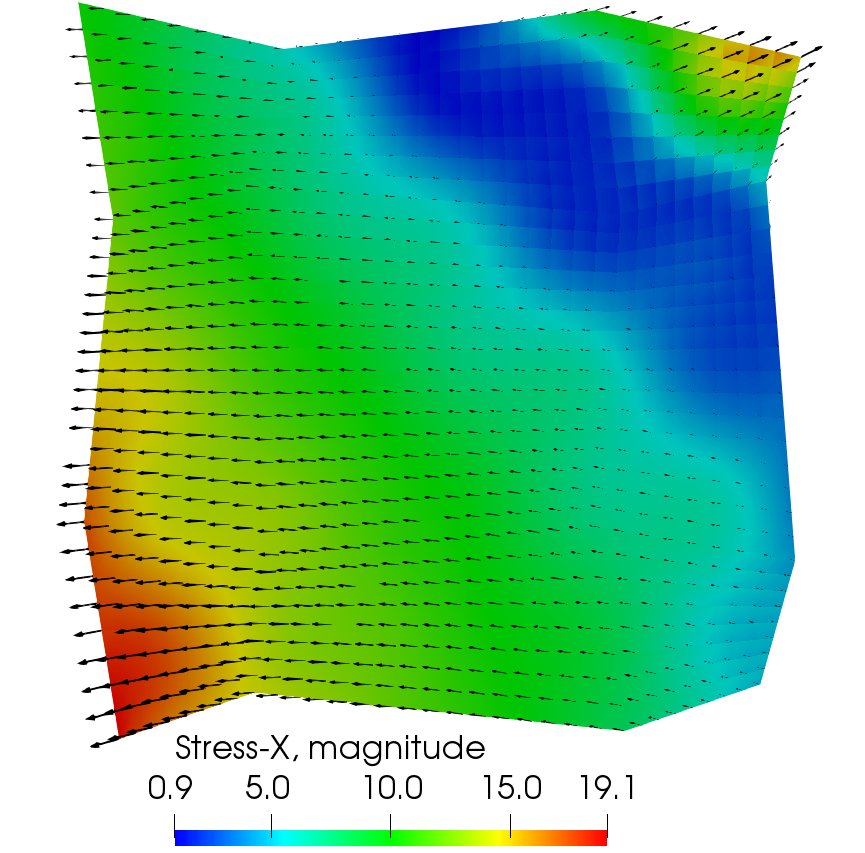}
		\caption{Stress, $x$-component}
		\label{fig:2_1}
	\end{subfigure}
	\begin{subfigure}[b]{0.23\textwidth}
		\includegraphics[width=\textwidth]{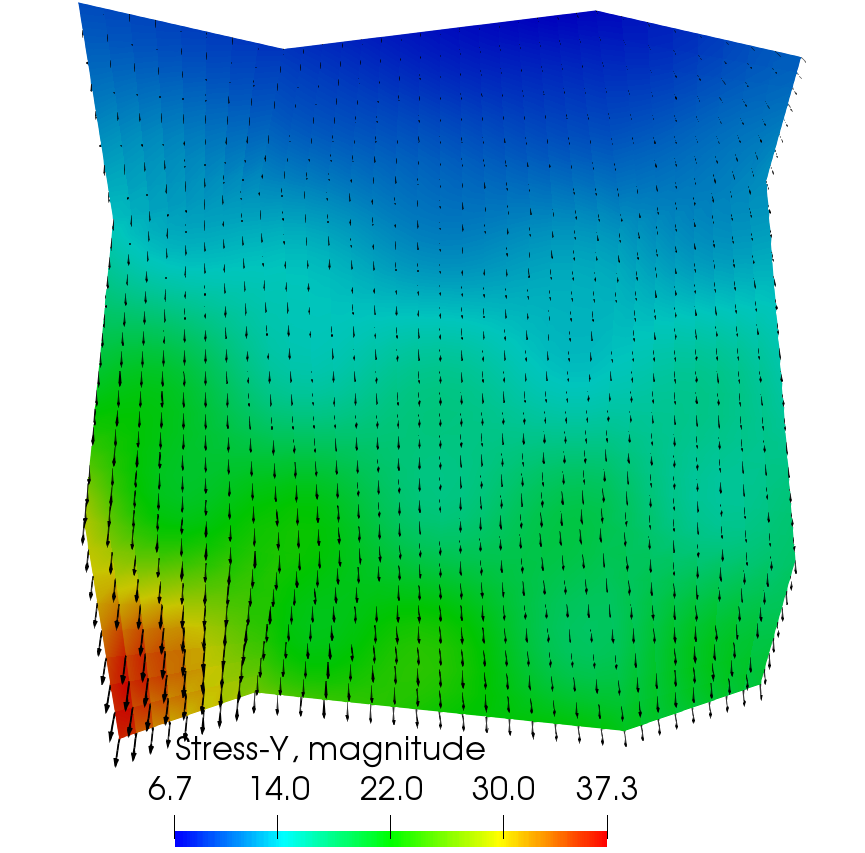}
		\caption{Stress, $y$-component}
		\label{fig:2_2}
	\end{subfigure}
	\begin{subfigure}[b]{0.23\textwidth}
		\includegraphics[width=\textwidth]{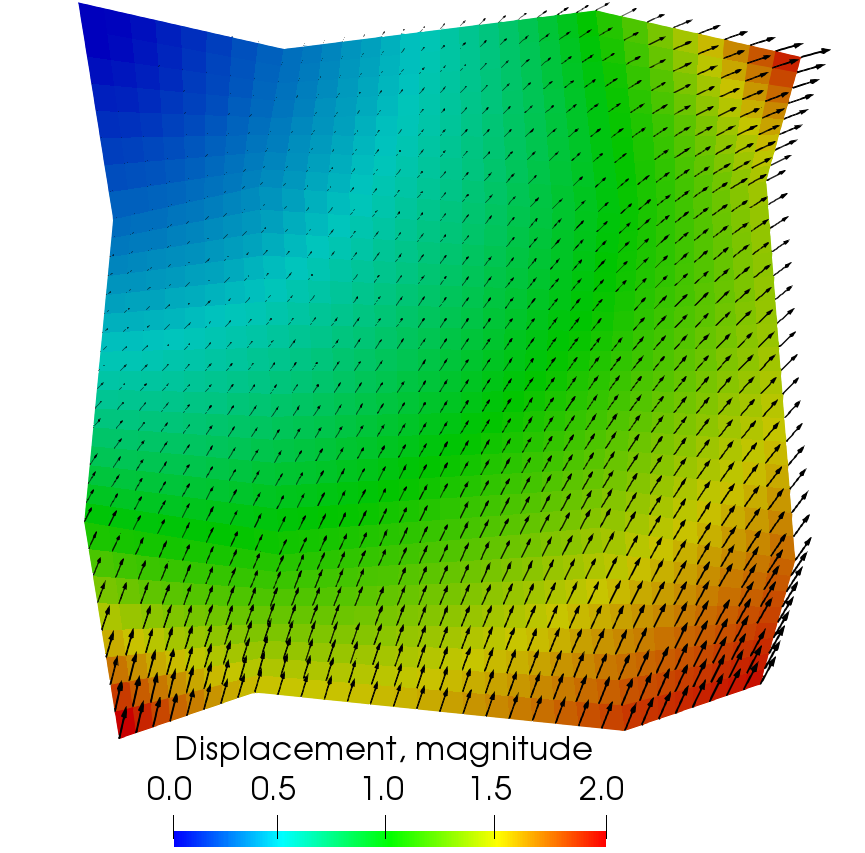}
		\caption{Displacement}
		\label{fig:2_3}
	\end{subfigure}
	\begin{subfigure}[b]{0.23\textwidth}
		\includegraphics[width=\textwidth]{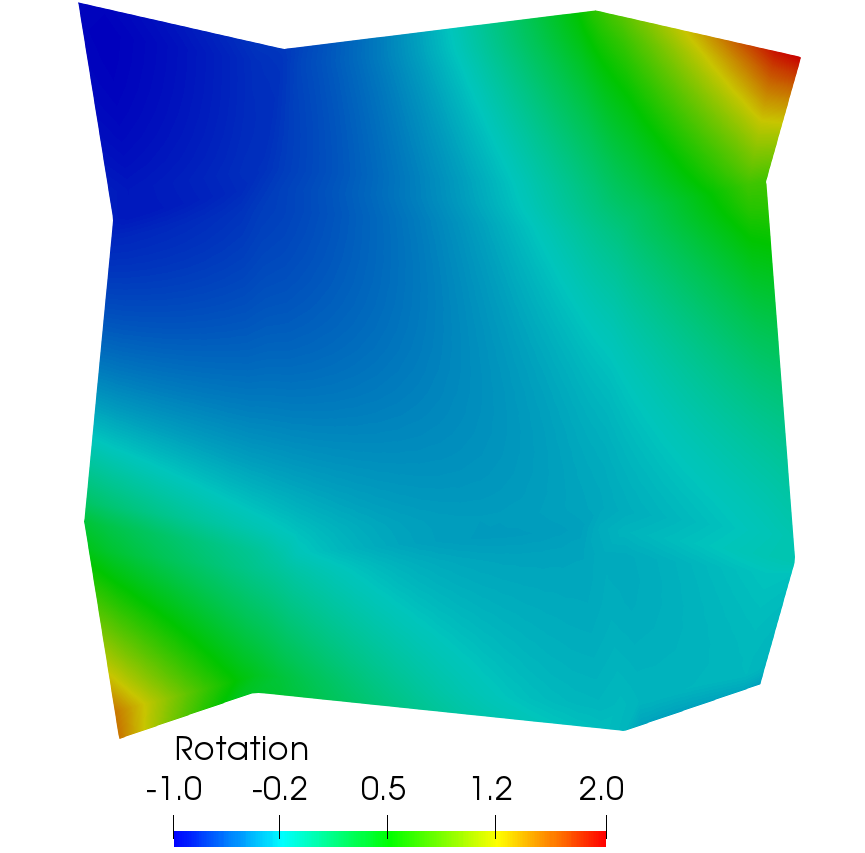}
		\caption{Rotation}
		\label{fig:2_4}
	\end{subfigure}
	\begin{subfigure}[b]{0.23\textwidth}
		\includegraphics[width=\textwidth]{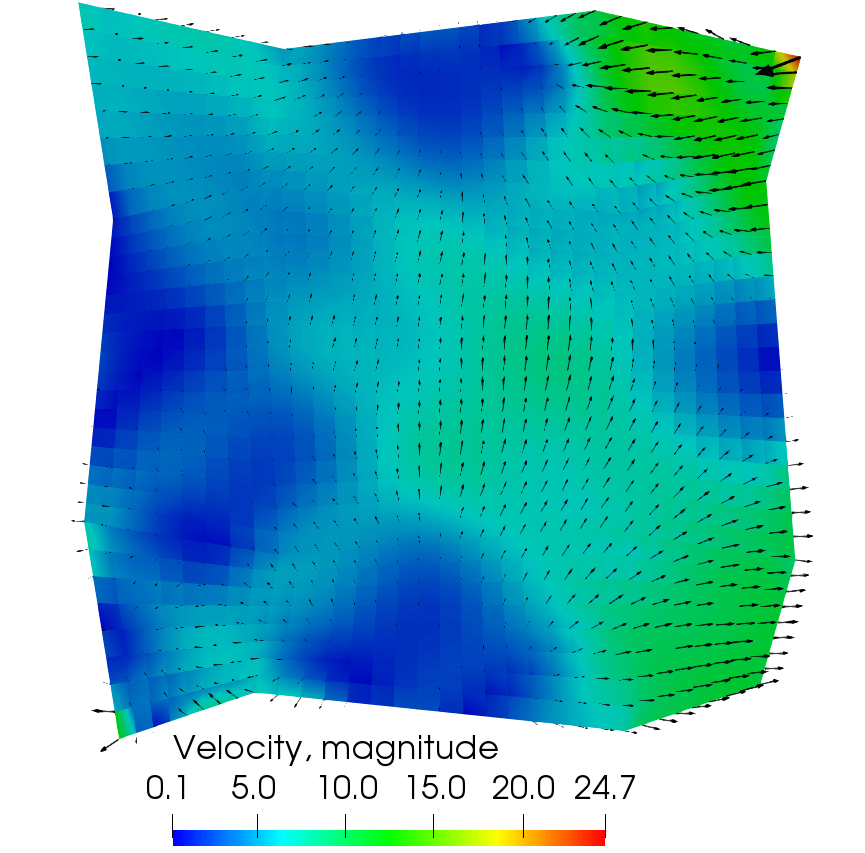}
		\caption{Darcy velocity}
		\label{fig:2_5}
	\end{subfigure}
	\begin{subfigure}[b]{0.23\textwidth}
		\includegraphics[width=\textwidth]{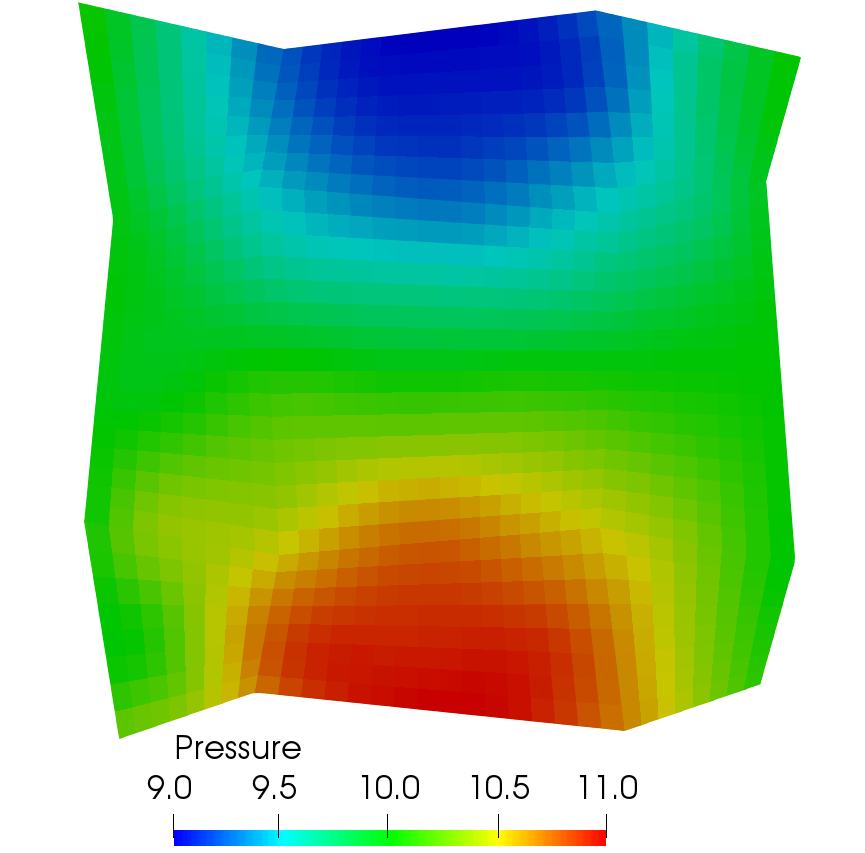}
		\caption{Darcy pressure}
		\label{fig:2_6}
	\end{subfigure}
	\caption{Example 2, computed solution with $h=\frac{1}{32}$ at the final time.}\label{fig:2}
\end{figure}

\subsection{Example 3}
We next focus on studying the locking-free properties of the
MSMFE-MFMFE method when applied to the solution of a two-dimensional
footing problem \cite{ORB,gaspar2008stabilized}. A load of given
intensity $\sigma_0$ is applied along a strip along the top of a
rectangular block of porous, saturated, and deformable soil. The
lateral sides and the bottom of the block are fixed. The entire
boundary is free to drain. The computational domain is $\Omega = [-50, 50] \times
[0, 75]$. We label the middle section of the top boundary, $x\in[-50/3, 50/3]$, $y=75$,
by $\Gamma_1$, the rest of the top side by
$\Gamma_2$, and all other boundaries by $\Gamma_3$. The boundary conditions are as follows:
\begin{align*}
	&\s\,n = (0,-\sigma_0)^T, &&\mbox{on } \Gamma_1,\\
	&\s\,n = (0,0)^T, &&\mbox{on } \Gamma_2, \\
	&u = (0,0)^T, &&\mbox{on } \Gamma_3, \\
	&p = 0, &&\mbox{on } \dO. \\
\end{align*}
The model parameters are: Young's modulus $E=3\cdot 10^4$ (N/m$^2$),
permeability $K=10^{-4}$ (m$^2$/Pa), load intensity $\sigma_0=10^4$
(N/m$^2$) and mass storativity $c_0 = 0.001$. We test the behavior of
the method in the incompressibility limit by setting Poisson ratio
$\nu=0.4995$. The initial pressure and displacement are set to zero.
We discretize the domain into 62025 unstructured simplices and solve
the problem for total time of $T=50$s using time step of size $\Delta t =
1$s.

\begin{figure}
	\centering
	\begin{subfigure}[b]{0.32\textwidth}
		\includegraphics[width=\textwidth]{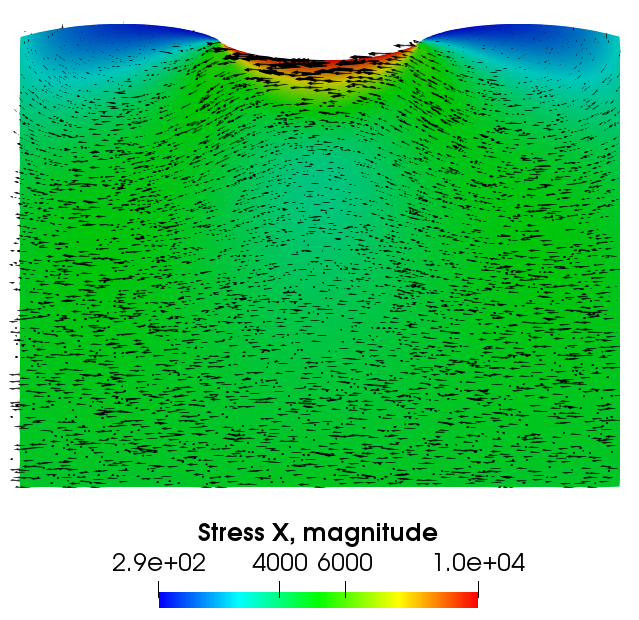}
		\caption{Stress, $x$-component}
		\label{fig:3_1}
	\end{subfigure}
	\begin{subfigure}[b]{0.32\textwidth}
		\includegraphics[width=\textwidth]{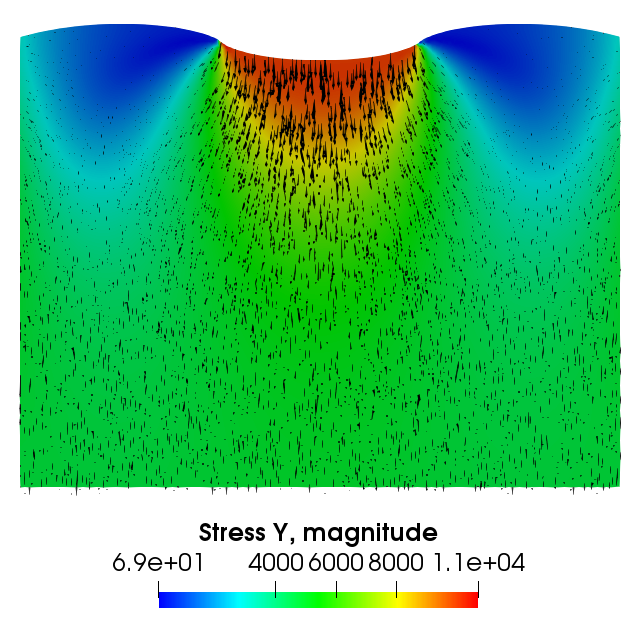}
		\caption{Stress, $y$-component}
		\label{fig:3_2}
	\end{subfigure}
	\begin{subfigure}[b]{0.32\textwidth}
		\includegraphics[width=\textwidth]{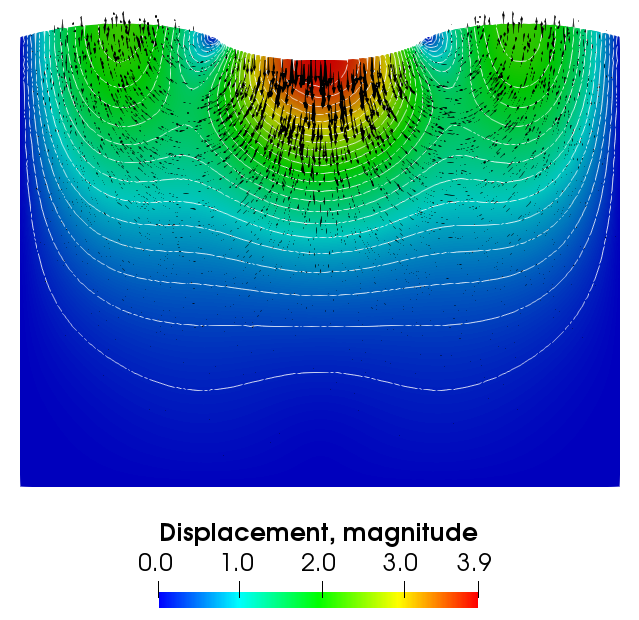}
		\caption{Displacement}
		\label{fig:3_3}
	\end{subfigure}
	\begin{subfigure}[b]{0.32\textwidth}
		\includegraphics[width=\textwidth]{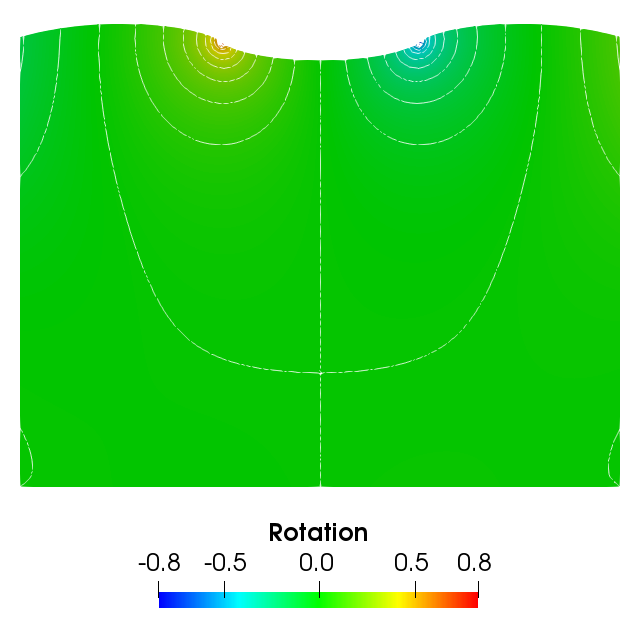}
		\caption{Rotation}
		\label{fig:3_4}
	\end{subfigure}
	\begin{subfigure}[b]{0.32\textwidth}
		\includegraphics[width=\textwidth]{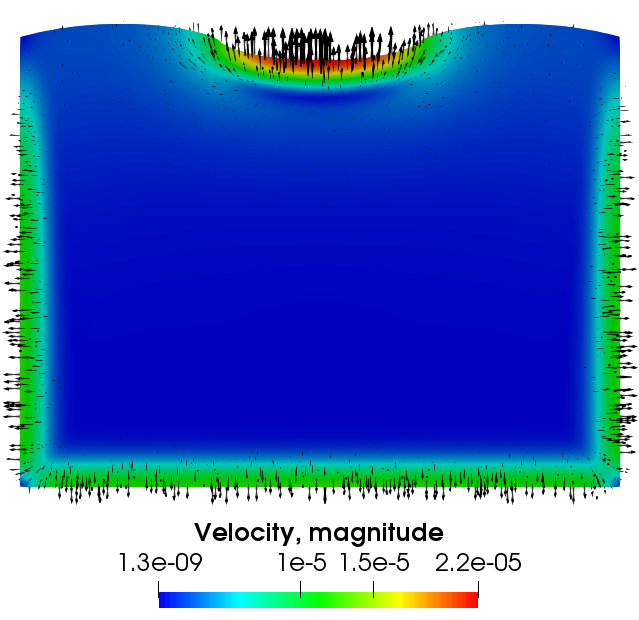}
		\caption{Darcy velocity}
		\label{fig:3_5}
	\end{subfigure}
	\begin{subfigure}[b]{0.32\textwidth}
		\includegraphics[width=\textwidth]{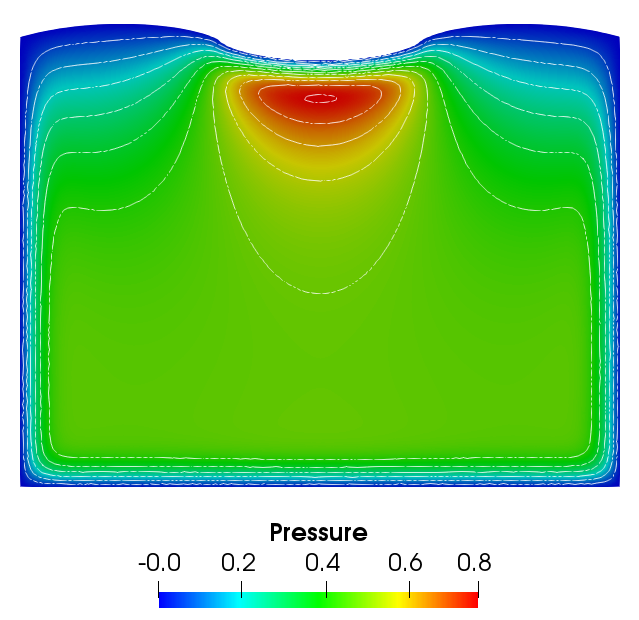}
		\caption{Darcy pressure}
		\label{fig:3_6}
	\end{subfigure}
	\caption{Example 3, computed solution at the final time on the deformed domain.}\label{fig:3}
\end{figure}

It is observed in \cite{ORB,gaspar2008stabilized} that for this value
of the Poisson ratio, inf-sup unstable discretizations may result in
spurious pressure modes and/or locking in the computed displacement.
In Figure \ref{fig:3} we show the solution obtained by MSMFE-MFMFE
method at the final time. For visualization purpose, the solution is
plotted on the deformed domain. Neither spurious oscillations in the
pressure, nor locking effects in the displacement are present,
illustrating that the proposed method inherits the locking-free
properties of the classical mixed method it is derived from. We
further note the smooth stress approximation and the accurate
resolution of the pressure and velocity boundary layers, as well as
the rotation singularities.

\subsection{Example 4}
In the last example we further illustrate the locking-free properties
of the MSMFE--MFMFE method in a different parameter regime.  It is
shown in \cite{phillips2009overcoming} that, with continuous finite
elements for the elasticity part of the system, locking occurs when
the storativity and permeability coefficients are very small. In this
regime, the locking is exhibited as spurious pressure oscillations at
early times.  A typical model problems that illustrates such behavior
is the cantilever bracket problem \cite{Liu-thesis}.
The computational domain is the unit square. We impose a no-flow
boundary condition along all sides. The deformation is fixed along the
left edge, and a downward traction is applied along the top. The
bottom and right sides are traction-free. More precisely, with the
sides of the domain labeled as $\Gamma_1,\dots,\Gamma_4$, starting
from the bottom side and going counterclockwise, we impose
\begin{align*}
	&z\cdot n = 0, &&\mbox{on } \dO = \Gamma_1\cup \Gamma_2\cup\Gamma_3\cup\Gamma_4, \\
	&\s\,n = (0,-1)^T, &&\mbox{on } \Gamma_3,\\
	&\s\,n = (0,0)^T, &&\mbox{on } \Gamma_1 \cup \Gamma_2, \\
	&u = (0,0)^T, &&\mbox{on } \Gamma_4.
\end{align*}
We use the same physical parameters as in \cite{phillips2009overcoming}, 
as they typically induce locking:
$$ 
E = 10^5, \quad \nu=0.4, \quad \alpha = 0.93, \quad c_0 = 0, \quad K = 10^{-7}.
$$
The time step is $\Delta t = 0.001$ and the total simulation time is $T = 1$.
\begin{figure}
	\centering
	\begin{subfigure}[b]{0.25\textwidth}
		\includegraphics[width=\textwidth]{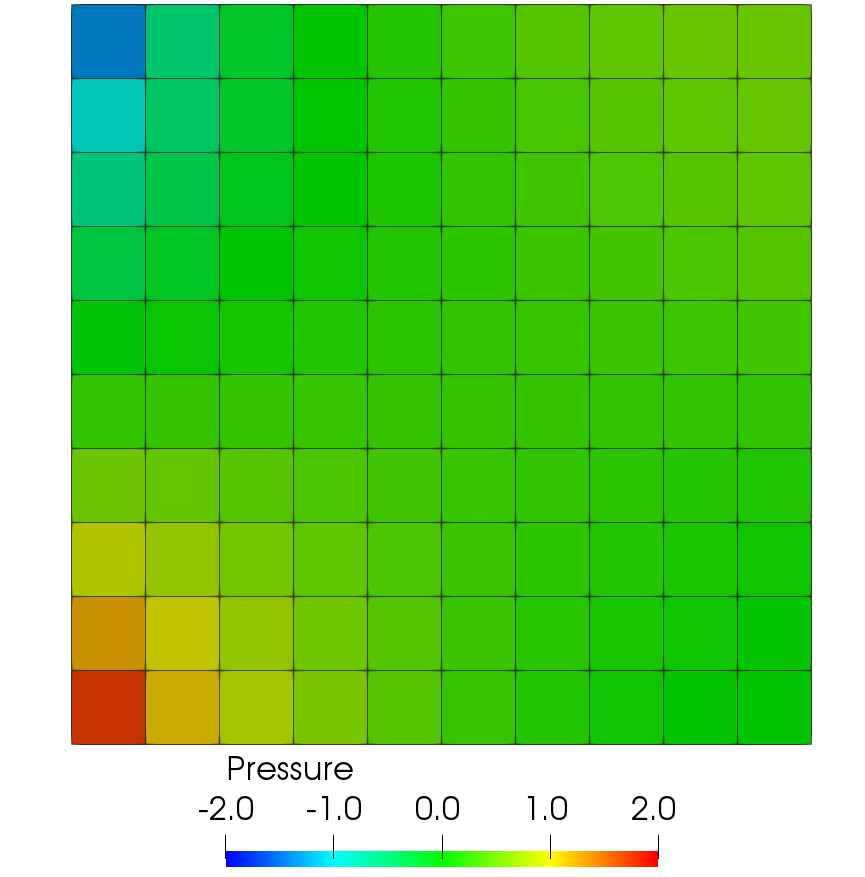}
		\caption{Pressure field, $t=0.001.$}
		\label{fig:4_1}
	\end{subfigure}
	\begin{subfigure}[b]{0.74\textwidth}
		\includegraphics[width=\textwidth]{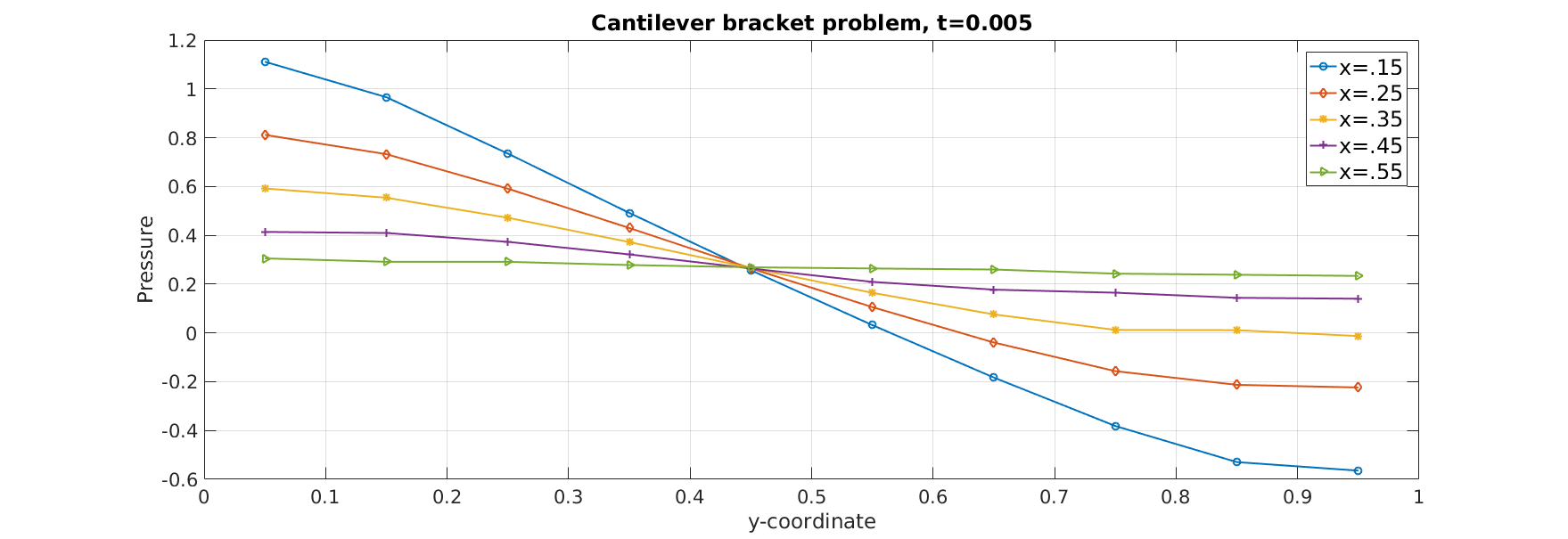}
		\caption{Pressure along different $x-$lines, $t=0.005.$}
		\label{fig:4_2}
	\end{subfigure}
	\caption{Example 3, computed pressure solutions.}\label{fig:4}
\end{figure}

Figure \ref{fig:4_1} shows that the MSMFE--MFMFE method yields a smooth
pressure field, in contrast to the non-physical checkerboard pattern
that one obtains with continuous elasticity elements at the early time
steps, see \cite{phillips2009overcoming}. In addition, Figure
\ref{fig:4_2} shows the pressure solution along different $x-$lines at
time $t=0.005$. It illustrates the lack of oscillations and
shows that our solution agrees with the one
obtained by DG-mixed or stabilized CG-mixed discretizations
\cite{phillips2009overcoming, Liu-thesis}. We remark that our method
requires solving a much smaller algebraic system than these two methods, which
furthermore is positive definite and more efficient to solve.

\bibliographystyle{abbrv}
\bibliography{msfmfe}

\end{document}